\documentclass[12pt]{article}
\usepackage[margin=1in]{geometry}
\usepackage{amsmath,amssymb,amsthm,mathtools}
\usepackage{enumitem}
\usepackage[colorlinks=true,linkcolor=blue,citecolor=blue]{hyperref}

\newtheorem{theorem}{Theorem}[section]
\newtheorem{proposition}[theorem]{Proposition}
\newtheorem{lemma}[theorem]{Lemma}
\newtheorem{corollary}[theorem]{Corollary}
\newtheorem{conjecture}[theorem]{Conjecture}
\theoremstyle{definition}
\newtheorem{definition}[theorem]{Definition}
\newtheorem{example}[theorem]{Example}
\theoremstyle{remark}
\newtheorem{remark}[theorem]{Remark}

\newcommand{\HH}{\mathbb{H}}
\newcommand{\CC}{\mathbb{C}}
\newcommand{\ZZ}{\mathbb{Z}}
\newcommand{\RR}{\mathbb{R}}
\newcommand{\Gam}{\Gamma}
\newcommand{\Lam}{\Lambda}
\newcommand{\End}{\mathrm{End}}
\newcommand{\Aut}{\mathrm{Aut}}
\newcommand{\GL}{\mathrm{GL}}
\newcommand{\PSL}{\mathrm{PSL}}
\newcommand{\SL}{\mathrm{SL}}
\newcommand{\Id}{\mathrm{Id}}

\title 
{Vertex operator algebra bundles on Riemann surfaces of higher\\ genus 
and automorphic forms for Fuchsian groups}
\author{A. Zuevsky\\
Institute of Mathematics\\
  Czech Academy of Science, Zitna 25\\
  Czech Republic\\
zuevsky@yahoo.com}

\begin{document}
\maketitle
\begin{abstract}
We generalize the geometric construction of vertex operator algebra (VOA) bundles and their associated automorphic forms from the elliptic modular curve to arbitrary Fuchsian groups $\Gamma \subset \mathrm{PSL}_2(\mathbb{R})$. A sharp topological dichotomy emerges regarding the existence of a holomorphic weight-$2$ quasi-automorphic generator $E_2^\Gamma$. When $\Gamma$  has a cusp, we construct $E_2^\Gamma$ via the analytic continuation of parabolic Eisenstein series and prove that the space of quasi-automorphic forms is a free polynomial extension, allowing the algebraic setup of the genus one theory, including the quasi-VOA structure and the characterization of strict automorphic forms via a lowering operator. Conversely, when $\Gamma$ is cocompact of genus $g\ge 2$, Atiyah's theorem on holomorphic connections rigorously obstructs the existence of $E_2^\Gamma$. For this obstructed case, 
we provide exact dimension formulas that link the shortage in lifting quasi-automorphic forms directly to the failure of quasi-primarity within the VOA, fully resolving the torsion-free case and conjecturing the extension to groups with elliptic points. 
\end{abstract} 
{\small
\textit{Keywords}: Vertex operator algebra; quasi-automorphic forms;  
Fuchsian groups; Riemann surfaces; vector bundles; holomorphic connections; 
Teichmuller space.

\noindent\textit{Mathematics Subject Classification (2020)}: 
17B69, 11F12, 30F35, 14H60, 30F60, 11F11. 
}

\tableofcontents
%% 
%%%%%%%%%%%%%%%%%%%%%%%%%%%%%%%%%%%%%%%%%%%%%%%%%%%%%%%%%%%%%%%%%%%%%%%%%%%%%%%%%%
%%  
\section{Introduction}
Barake, Chuchman, Franc, Mason and Nasserden \cite{BCFMN} recently constructed, for a vertex operator algebra (VOA) or quasi-VOA $V$, a vector bundle $\mathcal V_{X(1)}$ on the elliptic modular curve $X(1)$, equipped this bundle with a connection, and used it to give a complete description of the space $M(V)$ of $V$-valued modular forms as the kernel of an explicit lowering operator $\Lambda$ inside the larger space $Q(V)$ of $V$-valued quasi-modular forms. 
In this paper we extend their construction to an arbitrary Fuchsian group $\Gam\subset\PSL_2(\RR)$ of signature $(g;m_1,\dots,m_r;n)$, i.e., to Riemann surfaces $X=\Gam\backslash\overline\HH$ of arbitrary genus $g$, possibly with elliptic points and cusps, including the compact case $n=0$. 
The overall construction, common to \cite{BCFMN} and to the present paper, is summarized in the following diagram   
\begin{equation*}
\label{figoverview}
 {\text{VOA} \atop \text{or QVOA module $V$}} \xrightarrow{\ \S\ref{secbundle}\ }
{\text{automorphic vector}\atop \text{bundle $\mathcal V_X$ on $X=\Gam\backslash\overline\HH$}} 
\xrightarrow{\ {\text{global} \atop {\text{sections}}} \ }
{\text{vector-valued (quasi-)automorphic}\atop \text{forms $M(V,\Gam)$, $Q(V,\Gam)$}} 
\end{equation*}
A VOA/QVOA module $V$ determines an automorphic vector bundle $\mathcal V_X$ on $X$, equipped with a connection $\nabla$ (Theorem \ref{thmbundle}), whose holomorphic and quasi-holomorphic sections are exactly the vector-valued automorphic and quasi-automorphic forms $M(V,\Gam)\subset Q(V,\Gam)$ studied in \S\S\ref{secquasi}-\ref{secPop}. The case $\Gam=\Gam(1)$ recovers \cite{BCFMN}. The present paper is the extension of every arrow to arbitrary $\Gam$. 

We define the VOA bundle $\mathcal V_X$ on $X$ and show that the connection $\nabla=d+L(-1)\otimes d\tau$ of \cite{BCFMN} is again a connection on $\mathcal V_X$ for  every Fuchsian $\Gam$. We give a precise structural account of the condition imposed at elliptic points, showing (Lemma \ref{lemellipticfiniteorder}, Proposition \ref{propellipticstructure}) that the relevant operator is indeed of finite order despite involving a nilpotent term, but that this does not force $V$-valued forms into the quasi-primary subspace in general, only in the constant-section case that anchors the rest of the theory. The genus-zero, level-one theory of $V$-valued quasi-modular forms used crucially the existence of the classical quasi-modular Eisenstein series $E_2$.  
We show that this is related with the topology of $\Gam$. Namely, 
when $\Gam$ has at least one cusp, a literal analogue $E_2^\Gam$ exists.
 Its existence and exact transformation law are based on the classical (Hecke-Selberg-Roelcke) analytic continuation of the parabolic real-analytic Eisenstein series of weight $2$, 
which we use. 
From this single analytic input we give a complete, self-contained proof, not only an analogy with the level-one case, that $Q(\Gam)=M(\Gam)[E_2^\Gam]$ is a free polynomial extension of $M(\Gam)$ (Lemma \ref{lemfreeness}). 
 Then the entire algebraic machinery of \cite{BCFMN}, i.e., the doubled quasi-vertex operator algebra $Q(V,\Gam)$, the kernel theorem $M(V,\Gam)=\ker\Lambda$, and the weight-preserving isomorphism $M(\Gam)\otimes V^{(2)}\cong M(V,\Gam)$, is applied similarly to our case. 
 We also give, for the first time, a complete derivation of the higher-genus Ramanujan-type identity satisfied by $E_2^\Gam$, from which the quasi-vertex-operator-algebra bracket relations follow directly rather than by analogy.  
When $\Gam$ is cocompact of genus $g\ge 2$, by contrast, we prove that no such generator exists, for every signature (with or without elliptic points). A literal $E_2^\Gam$ would produce a holomorphic connection on the positive-degree canonical bundle, contradicting Atiyah's theorem on holomorphic connections.  
In the presence of elliptic points we establish this rigorously not by an orbifold form of Atiyah's theorem, but by an elementary reduction, via Selberg's lemma, to a torsion-free finite-index subgroup, on which the ordinary (non-orbifold) obstruction applies directly.  
We show this obstruction is completely understood at the level of dimensions. For torsion-free $\Gam$ we prove an exact dimension formula in which the filtration bound of \cite{BCFMN}-type arguments fails only at the top conformal degree, by precisely the failure of quasi-primarity in $V$.  
We conjecture the natural extension to elliptic points.   
We compute the relevant graded dimensions via the Riemann-Roch theorem for Fuchsian groups and describe the generalization of Hecke operators to correspondences between commensurable Fuchsian groups.  
In Part II of this same paper (\S\ref{secunivfamily}-\S\ref{secconclusion} below) we lift the whole construction to a family over Teichm\"uller space $\mathcal T_{g,n}$, define Teichm\"uller-automorphic forms, and discuss descent to the moduli space $\mathcal M_{g,n}$. These are established results, direct fiberwise consequences of Part I. We also explore, as an explicitly speculative direction rather than a theorem (\S\ref{secvariation}), a possible relation between the variation of the uniformizing projective connection and the Weil-Petersson geometry of $\mathcal T_{g,n}$.
%%

%%%%%%%%%%%%%%%%%%%%%%%%%%%%%%%%%%%%%%%%%%%%%%%%%%%%%%%%%%%%%%%%%%%%%%%%%%%%%%%%%%%%%%%%%%%
%%
The starting point of \cite{BCFMN} is the observation, going back to the geometric reformulation of vertex operator algebra (VOA) theory in \cite{FBZ}, that the data of a VOA or quasi-VOA (QVOA) $V$ is equivalent to the data of a (typically infinite rank) vector bundle on any smooth curve, and in particular on the moduli space of elliptic curves. Carrying this out explicitly for the elliptic modular curve $X(1)=\SL_2(\ZZ)\backslash\overline\HH$, the authors of \cite{BCFMN} construct the bundle $\mathcal V_{X(1)}$, show it carries a natural connection $\nabla=d+L(-1)\otimes d\tau$, and use the resulting theory of $V$-valued modular and quasi-modular forms to prove two main structural theorems: that the space $M(V)$ of $V$-valued modular forms is the kernel of an explicit $\mathfrak{sl}_2$-type lowering operator $\Lambda=\tfrac{12}{2\pi i}\partial/\partial E_2+L(1)$ acting on the larger space $Q(V)$ of $V$-valued quasi-modular forms, and that there is an explicit $M$-linear isomorphism $P\colon M\otimes V^{(2)}\xrightarrow{\sim}M(V)$. They use this isomorphism to determine completely the Hecke eigensystems occurring in $M(V)$.

All of this is carried out for the specific Fuchsian group $\Gam(1)=\SL_2(\ZZ)$, whose quotient $X(1)$ has genus zero, one cusp, and two elliptic points (of orders $2$ and $3$, both acted on trivially in the relevant sense). The purpose of the present paper is to show that essentially all of this theory extends to an arbitrary Fuchsian group $\Gam\subset\PSL_2(\RR)$ of the first kind, uniformizing a Riemann surface $X=\Gam\backslash\overline\HH$ of arbitrary genus $g$, with any signature $(g;m_1,\dots,m_r;n)$, including the case $n=0$ of a 
 compact Riemann surface with no cusps at all, which is the case of primary interest for higher-genus geometry.

For readers unfamiliar with \cite{BCFMN}, we make explicit here what is new, rather than a routine transcription. Four results have no counterpart at all in \cite{BCFMN}, because $\Gam(1)$ is cusped and never cocompact: the topological dichotomy itself, i.e., that the existence of a holomorphic $E_2^\Gam$ is governed by whether $\Gam$ has a cusp; the Atiyah-obstruction Theorem \ref{thmatiyah}, proving $E_2^\Gam$ does not exist for cocompact $\Gam$ of genus 
$g\ge 2$; the exact dimension formulas of \S\ref{seccocompact} for this obstructed case, identifying the deficiency with the failure of quasi-primarity in $V$; and the extension to families over Teichm\"uller space, with descent to moduli, in Part II. Two further results are new even in the cusped case, where a direct analogue of the setting of \cite{BCFMN} does apply.  The proof that $Q(\Gam)=M(\Gam)[E_2^\Gam]$ is free for an arbitrary $\Gam$ with a cusp (Lemma \ref{lemfreeness}), a fact \cite{BCFMN} did not need to establish since freeness for $\Gam(1)$ was already classical; and the explicit derivation, rather than citation or analogy with $\Gam(1)$, of the higher-genus Ramanujan-type identity satisfied by $E_2^\Gam$ (\S\ref{secquasi}). What remains close to \cite{BCFMN}, is everything of type (i) below: the purely Lie-theoretic algebraic machinery (the doubled QVOA, the kernel theorem, the lowering operator) transfers with no change at all once $E_2^\Gam$ is available, precisely because none of it used any property of $\Gam(1)$ to begin with.

On inspecting the proofs in \cite{BCFMN}, one finds that they split into two kinds of arguments: 
\begin{itemize}
\item[(i)] Arguments that are purely Lie-theoretic, depending only on the cocycle $K(\gamma,\tau)$ built from the canonical $\mathfrak{sl}_2$-action $L(-1)$, $L(0)$, $L(1)$ on $V$, and on the commutation relations of the Virasoro algebra.   
These never use that $\Gam=\Gam(1)$, but deal an arbitrary subgroup $\Gam\subset\GL_2(\RR)^+$.
\item[(ii)] Arguments that use a specific feature of $\Gam(1)$, namely the existence of the classical weight-$2$ quasi-modular Eisenstein series $E_2$, whose anomalous transformation law $E_2(\gamma\tau)=j(\gamma,\tau)^2E_2(\tau)+\tfrac{12}{2\pi i}cj(\gamma,\tau)$ 
 is used to form the algebra $Q=\CC[E_2,E_4,E_6]$, the lowering operator $\Lambda$, and the isomorphism $P$. 
\end{itemize}
Arguments of type (i) generalize to an arbitrary Fuchsian group with no change at all (Theorems A and B below). Arguments of type (ii) generalize only when $\Gam$ has a cusp.  
We show in Proposition \ref{propE2exists} that such $\Gam$ admit a holomorphic weight-$2$ quasi-automorphic generator $E_2^\Gam$ with exactly the anomalous transformation law of $E_2$.  
Once $E_2^\Gam$ is available, the entire machinery of \cite{BCFMN}, i.e., the quasi-VOA structure, the lowering operator, the weight-preserving isomorphism, are applicable with only notational change.   
When $\Gam$ is cocompact of genus $g\ge 2$, we prove in Theorem \ref{thmatiyah} that no such generator can exist, by an argument with no counterpart in \cite{BCFMN}.
 A literal $E_2^\Gam$ would assemble, over all weights simultaneously, into a holomorphic connection on the line bundle $\omega$ of weight-one forms. 
Atiyah's theorem on holomorphic connections \cite{Atiyah} forbids this whenever 
$\deg\omega\ne 0$, which holds for every $g\ge 2$.  
 This sharp dichotomy - present already at the classical, scalar level, 
 where it appears not to have been remarked on before in this form - is the principal new phenomenon of the higher-genus theory, with no trace at level one where the unique cusp supplies exactly the tool that Atiyah's theorem requires. 
%%
%%%%%%%%%%%%%%%%%%%%%%%%%%%%%%%%%%%%%%%%%%%%%%%%%%%%%%%%%%%%%%%%%%%%%%%%%%%%%%%%%%
%% 
\subsection*{Summary of results}
Throughout, $\Gam\subset\PSL_2(\RR)$ is a Fuchsian group of the first kind of signature $(g;m_1,\dots,m_r;n)$, $X=\Gam\backslash\overline\HH$ is the associated compact Riemann surface of genus $g$, and $Y=\Gam\backslash\HH=X\smallsetminus\{\text{cusps}\}$. Fix a QVOA $V$.
\begin{itemize} 
\item[]\noindent\textit{Theorem A} (\S\ref{secbundle}). The cocycle $K(\gamma,\tau)$ of \cite{BCFMN} defines a left action of $\Gam$ on $\HH\times V$ for every Fuchsian group $\Gam$. The quotient $\mathcal V_X:=\Gam\backslash(\HH\times V)$ is a vector bundle on $X$ whose sections of weight $k$ are exactly the $V$-valued automorphic forms $M_k(V,\Gam)$ for $\Gam$. 
\item[]\textit{Theorem B} (\S\ref{secconnection}). $\nabla=d+L(-1)\otimes d\tau$ is a connection on $\mathcal V_X$ for every Fuchsian $\Gam$. 
\item[]\textit{Theorem (Atiyah obstruction)} (\S\ref{secE2}). If $\Gam$ is cocompact of genus $g\ge 2$ - of any signature, with or without elliptic points - no holomorphic weight-$2$ depth-$1$ quasi-automorphic generator exists. Equivalently, $Q_2(\Gam)=M_2(\Gam)$. The elliptic-point case is proved by an explicit reduction, via Selberg's lemma, to a torsion-free finite-index subgroup (Lemma \ref{lemselberg}). 
\item[]\textit{Proposition} (\S\ref{secE2}, existence of $E_2^\Gam$). If $\Gam$ has at least one cusp, it admits a holomorphic weight-$2$ generator $E_2^\Gam$ of depth exactly $1$, normalized so that $E_2^\Gam|_2\gamma=E_2^\Gam+\tfrac{12}{2\pi i}X(\gamma,\tau)$. Its  existence is based on the classical (Selberg-Roelcke) continuation of the parabolic weight-$2$ Eisenstein series, which we cite precisely (Lemma \ref{lemheckefact}), while the polynomial freeness $Q(\Gam)=M(\Gam)[E_2^\Gam]$, on which the results below depend, is proved in full from this single input (Lemma \ref{lemfreeness}).
\item[]\textit{Theorem C} (\S\ref{secquasi}). $Q(V,\Gam)\cong Q(\Gam)\otimes V^{(2)}$ as graded $Q(\Gam)$-modules for every $\Gam$, and $Q(V,\Gam)$ is a doubled quasi-vertex operator algebra whenever $\Gam$ has a cusp.
\item[]\textit{Theorem D} (\S\ref{seclowering}). If $\Gam$ has a cusp, 
$M(V,\Gam)=\ker\Lambda$, where $\Lam=\tfrac{12}{2\pi i}\partial/\partial E_2^\Gam+L(1)$.
\item[]\textit{Theorem E} (\S\ref{secPop}). If $\Gam$ has a cusp and $V$ is of CFT-type, there is an explicit weight-preserving $M(\Gam)$-linear isomorphism 
$P\colon M(\Gam)\otimes V^{(2)}\xrightarrow{\sim}M(V,\Gam)$, and consequently, 
$$\dim M_k(V,\Gam)=\sum_{i=0}^{k/2}\dim M_{2i}(\Gam)\dim V_{k/2-i}.$$ 
\item[]\textit{Theorem F} (\S\ref{seccocompact}). For an arbitrary $\Gam$, this remains an  upper bound (Theorem \ref{thmbound}). We show the shortage is always  concentrated at the top conformal degree and equals exactly $\dim V_{k/2}-\dim QP_{k/2}(V)$ (Proposition \ref{proptopexact}, any signature). For torsion-free cocompact $\Gam$ this gives an exact formula (Theorem \ref{thmexactdim}), which, as we conjecture, extends to elliptic points (Conjecture \ref{conjexactdim}). 
\end{itemize}
Part II of this paper (\S\ref{secunivfamily}-\S\ref{secconclusion}) lifts these constructions to the universal family over Teichm\"uller space 
$\mathcal T_{g,n}$, defines the resulting bundle of Teichm\"uller automorphic forms, 
and discusses its descent to the moduli space $\mathcal M_{g,n}$ (Theorems and Propositions of \S\S\ref{secunivfamily}-\ref{secteichforms}, all proved). \S\ref{secvariation} then explores, explicitly as a speculative direction rather than a proved result, a possible relationship between the variation of the uniformizing projective connection as $\Gam_t$ moves through $\mathcal T_{g,n}$, the classical theory of accessory parameters, and the Weil-Petersson geometry of Teichm\"uller space.

We use the vertex algebra background of \cite[Appendix B]{BCFMN} (Fock spaces, fields, vertex algebras, $\ZZ$-graded vertex algebras, QVOAs, VOAs, the doubled space $V^{(2)}$, tensor products) without restating it, and we keep the notation of \cite[\S1.1]{BCFMN} for $j(\gamma,\tau)$, $X(\gamma,\tau)$, $K(\gamma,\tau)$, $F_NV$, $V^{(2)}$, etc.
\subsection*{Notations} 
For reference, we collect here the notation used most frequently below. Each item is defined precisely where it first occurs, and \S\ref{secfuchsian} fixes the standing convention on orbifold versus ordinary bundles.
\begin{center}
\begin{tabular}{@{}p{4.1cm}p{9.7cm}@{}}
$\Gam\subset\PSL_2(\RR)$ & Fuchsian group of the first kind, signature $(g;m_1,\dots,m_r;n)$ (\S\ref{secfuchsian}) \\[2pt]
$X=\Gam\backslash\overline\HH$, $\ Y=\Gam\backslash\HH$ & compactified and open quotient orbifolds; $\overline\HH=\HH\cup\{\text{cusps}\}$ (\S\ref{secfuchsian}) \\[2pt]
$K_X$,\ $\omega$ & the (orbifold) canonical bundle, and its purely informal, non-existent ``square root'' (Remark \ref{rmkomeganotation}) \\[2pt]
$j(\gamma,\tau)=c\tau+d$,\ $X(\gamma,\tau)=\tfrac{c}{j(\gamma,\tau)}$ & automorphy factor, and its logarithmic-derivative cocycle (\S\ref{secfuchsian}) \\[2pt]
$M_k(\Gam)$,\ $Q_k(\Gam)$ & holomorphic weight-$k$ automorphic, resp.\ quasi-automorphic, scalar forms for $\Gam$ (\S\ref{secE2}) \\[2pt]
$E_2^\Gam$ & holomorphic weight-$2$ depth-$1$ quasi-automorphic generator, when it exists (Proposition \ref{propE2exists}) \\[2pt]
$V$,\ $F_NV$,\ $V_n$ & a VOA or QVOA, its conformal filtration, its degree-$n$ graded piece \\[2pt]
$K(\gamma,\tau)\in\End(V)$ & cocycle built from the $\mathfrak{sl}_2$-action $L(-1)$, $L(0)$, $L(1)$  on $V$ (\S\ref{secbundle}) \\[2pt]
$\mathcal V_X$ & automorphic vector bundle on $X$ associated to $V$ (Theorem \ref{thmbundle}, Diagram given in Introduction) \\[2pt]
$M_k(V,\Gam)$,\ $Q_k(V,\Gam)$ & holomorphic, resp.\ quasi-holomorphic, weight-$k$ $V$-valued (quasi-)automorphic forms (\S\ref{secquasi}) \\[2pt]
$\Lam$ & lowering operator $Q_{k+2}(V,\Gam)\to Q_k(V,\Gam)$ (\S\ref{seclowering}) \\[2pt]
$P$ & weight-preserving isomorphism $Q(\Gam)\otimes V^{(2)}\xrightarrow{\sim}Q(V,\Gam)$ (\S\ref{secPop}) \\
\end{tabular}
\end{center}
%% 
%%%%%%%%%%%%%%%%%%%%%%%%%%%%%%%%%%%%%%%%%%%%%%%%%%%%%%%%%%%%%%%%%%%%%%%%%%%%%%
%% 
\part{Automorphic forms for Fuchsian groups} 
\section{Fuchsian groups, projective structures, and automorphic forms}  
\label{secfuchsian} 
\subsection{Signature and uniformization} 
Let $\Gam\subset\PSL_2(\RR)$ be a finitely generated discrete group of finite hyperbolic covolume (a Fuchsian group of the first kind). Such a group has a signature $(g;m_1,\dots,m_r;n)$: the compactified quotient $X:=\Gam\backslash\overline\HH$ (where $\overline\HH=\HH\cup\{\text{cusps of }\Gam\}$) is a compact Riemann surface of genus $g$. $\Gam$ has $r$ 
$\Gam$-conjugacy classes of maximal finite cyclic subgroups, of orders $m_1$, $\dots$,  
$m_r\ge 2$, fixing points $\tau_1$, $\dots$, $\tau_r\in\HH$ (the elliptic points); and $\Gam$ has $n$ $\Gam$-inequivalent cusps. 
 We write $Y:=\Gam\backslash\HH=X\smallsetminus\{n\text{ cusps}\}$. The group $\Gam(1)=\PSL_2(\ZZ)$ treated in \cite{BCFMN} is the special case of signature $(0;2,3;1)$. The case of present interest for higher-genus geometry is $g\ge 2$, $n=0$ (no cusps), for which $\Gam$ is cocompact. We allow $r=0$ (torsion-free $\Gam$) throughout as a leading special case.
As in \cite[\S1.1.3]{BCFMN}, for $\gamma=\left(\begin{smallmatrix}a&b\\c&d\end{smallmatrix}\right)\in\GL_2(\RR)^+$ we write $j(\gamma,\tau)=c\tau+d$ and $X(\gamma,\tau)=c/j(\gamma,\tau)$, and for a meromorphic $U$-valued function $f$ ($U$ a finite-dimensional vector space) we write $f|_k\gamma(\tau)=j(\gamma,\tau)^{-k}(\det\gamma)^{k/2}f(\gamma\tau)$.

\emph{Conventions on orbifold versus ordinary bundles.} Every line bundle on $X$ considered below (e.g., $K_X$, and the purely informal $\omega$ of \S\ref{secE2}) is understood, by default, as an orbifold line bundle: equivalently, as the datum of a $\Gam$-automorphy factor on the simply connected $\HH$. This is unavoidable once $r\ge 1$, since $X$ then carries  orbifold structure at the elliptic points $\tau_1$, $\dots$, $\tau_r$. We reserve the word  ordinary for the torsion-free case $r=0$, where $X$ (or $Y$) is a Riemann surface, every orbifold correction below vanishes identically, and an orbifold line bundle is simply an ordinary line bundle in the usual sense. We work with automorphy factors on $\HH$ as the primary object throughout, so that no orbifold or stack-theoretic formalism is ever actually needed, and invoke results proper to a smooth compact Riemann surface (Riemann-Roch, Atiyah's theorem) only once $r=0$, either because $\Gam$ is assumed torsion-free from the outset, or after the reduction of Lemma \ref{lemselberg} below. 
%%%%%%%%%%%%%%%%%%%%%%%%%%%%%%%%%%%%%%%%%%%%%%%%%%%%%%%%%%%%%%%%%%%%%%%%%%%%%%%%%%%%%%
%% 
\subsection{Classical automorphic forms for $\Gam$}
A holomorphic function $f\colon\HH\to\CC$ is a \emph{weakly holomorphic automorphic form of weight $k$} for $\Gam$ if $f|_k\gamma=f$ for all $\gamma\in\Gam$. Holomorphy of $f$ requires two further local conditions, exactly as in the classical theory (see, e.g., \cite{Shimura}):  

\noindent\quad\emph{at each cusp}: writing $q_\lambda=e^{2\pi i\tau/\lambda}$ for the local parameter at a cusp of width $\lambda$, $f$ should have a holomorphic $q_\lambda$-expansion;

\noindent\quad\emph{at each elliptic point} $\tau_i$, with $\gamma_i$ a generator of the (cyclic, order $m_i$) stabilizer of $\tau_i$ in $\Gam$: since $\gamma_i\tau_i=\tau_i$, the transformation law forces
\begin{equation}
\label{eqellipticscalar}
f(\tau_i)\left(j(\gamma_i,\tau_i)^k-1\right)=0,
\end{equation}
i.e., either $f(\tau_i)=0$, or $j(\gamma_i,\tau_i)^k=1$. 
Note that the multiplier $j(\gamma_i,\tau_i)^{-2}$ is always a primitive $m_i$-th root of  unity, the rotation factor of the elliptic transformation $\gamma_i$ in a local coordinate centered at $\tau_i$, see \cite{Shimura,Miyake}. 

We write $M_k(\Gam)$ for the resulting (finite-dimensional) space and $M(\Gam)=\bigoplus_kM_k(\Gam)$.
%%
%%%%%%%%%%%%%%%%%%%%%%%%%%%%%%%%%%%%%%%%%%%%%%%%%%%%%%%%%%%%%%%%%%%%%%%%%%%%%%%%%%%%%%
%% 
\begin{proposition}
[Riemann-Roch for Fuchsian groups, classical case]
\label{propRR}
For even $k\ge 4$,
\begin{equation}
\label{eqRRformula}
\dim M_k(\Gam)=(k-1)(g-1)+\frac{kn}{2}+\sum_{i=1}^r\left\lfloor\frac{k}{2}\left(1-\frac1{m_i}\right)\right\rfloor.
\end{equation}
Moreover $\dim M_0(\Gam)=1$, and
$$\dim M_2(\Gam)=\begin{cases}g+n-1,& n\ge 1,\\ g,& n=0,\end{cases}$$ where 
 the elliptic correction terms in \eqref{eqRRformula} vanishing identically at $k=2$ since $0<1-1/m_i<1$, and $M_k(\Gam)=0$ for $k$ odd or $k<0$.
\end{proposition}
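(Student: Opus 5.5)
The approach is the standard one (Shimura, Thm.~2.23; Miyake, Thm.~2.5.2): identify weight-$k$ forms with sections of a line bundle on the compact surface $X$ and apply Riemann--Roch. Fix even $k\ge 0$ and a nonzero meromorphic $f_0$ of weight $k$ for $\Gam$; one can take $f_0=h^{k/2}$ with $h$ a suitable meromorphic weight-$2$ form, such as the derivative of a nonconstant function on $X$. Then $f\mapsto f/f_0$ identifies $M_k(\Gam)$ with a space of meromorphic functions on $X$. The holomorphy conditions at points of $Y$, at cusps and at elliptic points become lower bounds on the divisor of $f/f_0$. This gives
\[
M_k(\Gam)\cong L(D_k),\qquad D_k=\mathrm{div}_X(f_0)+\sum_{\text{cusps}}(\cdot)+\sum_i(\cdot),
\]
where $\mathrm{div}_X$ is measured in the local coordinates of $X$. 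The key bookkeeping step is the local order of $f_0$: at an elliptic point $\tau_i$ the local coordinate is $t=((\tau-\tau_i)/(\tau-\bar\tau_i))^{m_i}$, so $\nu_{\tau_i}(f)=m_i\,\nu_t(f)$ when $\nu_t$ is computed for the function on $X$.

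Next, compute the degree. For $k=2$ the differential $f_0\,d\tau$ descends to a meromorphic differential $\eta$ on $X$. One has $d\tau\sim t^{1/m_i-1}dt$ near $\tau_i$, and $d\tau\sim q^{-1}dq$ at a cusp. Hence a weight-$2$ form $f$ gives a differential with
\[
\mathrm{div}(\eta)=\mathrm{div}_X(f)-\sum_i\left(1-\tfrac1{m_i}\right)P_i-\sum_{\text{cusps}}Q_j,
\]
where the orders of $f$ at the $P_i$ are fractional. Taking $k/2$-th powers gives
\[
\deg_X(f)=\tfrac k2\Big(2g-2+\sum_i(1-\tfrac1{m_i})+n\Big)
\]
for any meromorphic weight-$k$ form. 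Holomorphy requires order $\ge0$ on $Y$ and at the cusps, and since orders at $P_i$ lie in $-\tfrac k2(1-\tfrac1{m_i})+\ZZ$, holomorphy at $P_i$ amounts to an integral divisor bound. So $M_k(\Gam)\cong L(D)$ with
\[
\deg D=k(g-1)+\tfrac{kn}{2}+\sum_i\Big\lfloor \tfrac k2\big(1-\tfrac1{m_i}\big)\Big\rfloor .
\]

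Then apply Riemann--Roch, $\ell(D)=\deg D-g+1+\ell(K-D)$. For $k\ge4$ one gets $\deg D>2g-2$. This holds whenever $g\ge2$, or when $g\le1$ using the positive covolume $2g-2+\sum(1-1/m_i)+n>0$; the small-signature cases need a quick check against $\deg K=2g-2$. Hence $\ell(K-D)=0$, and the formula~\eqref{eqRRformula} follows. For $k=2$, a holomorphic weight-$2$ form is exactly a differential on $X$ with at most simple poles at cusps. Elliptic points impose nothing, since the floors vanish, and the residue theorem forces the sum of residues to be zero. This gives $g$ when $n=0$ and $g+n-1$ when $n\ge1$. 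For $k=0$ the forms are holomorphic functions on compact $X$, hence constants. For $k<0$ the degree of $D$ is negative. For odd $k$, the element $-I$ acting trivially in $\PSL_2$ but with $j=-1$ kills forms, under the standard convention of lifting to $\SL_2$.

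The main obstacle is the careful local analysis at elliptic points. One must show that the rounding gives exactly $\lfloor\tfrac k2(1-\tfrac1{m_i})\rfloor$, which requires confirming that the fractional order of $f_0$ at $P_i$ is $-\tfrac k2(1-\tfrac1{m_i})$ modulo $\ZZ$, and also verify the $\deg D>2g-2$ inequality in low-genus signatures. In the paper I would treat this by citing Shimura's computation rather than redoing it, since the proposition is labelled classical.
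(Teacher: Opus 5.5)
Your route is the one the paper takes. The paper's proof only cites the classical Riemann--Roch computation for $K_X^{\otimes k/2}$ (Shimura, Miyake), notes that $H^1$ vanishes for $k\ge4$, and checks $M_2(\Gam)=H^0(X,K_X)$ when $n=0$. Your sketch carries out more of that computation: the divisor of $f_0=h^{k/2}$, and the cases $k=2$ with cusps, $k=0$, $k<0$ and odd $k$.

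\textbf{Sign error.} The order of a weight-$k$ form at $P_i$ lies in $+\tfrac k2(1-\tfrac1{m_i})+\ZZ$ (equivalently $-\tfrac{k}{2m_i}+\ZZ$), not in $-\tfrac k2(1-\tfrac1{m_i})+\ZZ$.
\begin{itemize}
\item This follows from your own formula $\mathrm{div}_X(f)=\mathrm{div}(\eta)+\sum_i(1-\tfrac1{m_i})P_i+\sum_jQ_j$ with $\mathrm{div}(\eta)$ integral, applied to $f_0=h^{k/2}$.
\item It is already visible for $\PSL_2(\ZZ)$: $E_4$ has order $1/3$ at $\rho$, whereas your congruence predicts $2/3$ modulo $\ZZ$.
\item With the correct sign, the coefficient of $D$ at $P_i$ is $\lfloor\nu_{P_i}(f_0)\rfloor$. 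The total loss of degree is then $\sum_i\{\tfrac k2(1-\tfrac1{m_i})\}$, and your displayed $\deg D$ follows.
\end{itemize}

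\textbf{Two small completions.}
\begin{itemize}
\item The low-genus check for $k\ge4$ is uniform. Since $\{\tfrac k2(1-\tfrac1{m_i})\}$ is a multiple of $1/m_i$, it is at most $1-\tfrac1{m_i}$. Hence $\lfloor\tfrac k2(1-\tfrac1{m_i})\rfloor\ge(\tfrac k2-1)(1-\tfrac1{m_i})$, and so $\deg D-(2g-2)\ge(\tfrac k2-1)\big(2g-2+n+\sum_i(1-\tfrac1{m_i})\big)+n>0$.
\item For $k=2$, $n\ge1$, the residue theorem only gives the upper bound $g+n-1$. Equality needs Riemann--Roch for $K_X+\sum_jQ_j$ (of degree $2g-2+n>2g-2$), or equivalently the existence of differentials of the third kind.
\end{itemize}
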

%% 
%%%%%%%%%%%%%%%%%%%%%%%%%%%%%%%%%%%%%%%%%%%%%%%%%%%%%%%%%%%%%%%%%%%%%%%%%%%%%%%%%%%%%%%%
%% 
\begin{proof}
This is the classical dimension formula for the orbifold line bundle $K_X^{\otimes k/2}$ on $X$ - recall the convention of \S\ref{secfuchsian}: an orbifold bundle in general, an ordinary line bundle once $r=0$ whose sections of weight $k$ are exactly the holomorphic automorphic forms of that weight, see \cite[Theorem 2.24]{Shimura} or \cite[\S2.6]{Miyake}. For $k\ge 4$ the relevant $H^1$ vanishes by degree considerations and Riemann-Roch gives \eqref{eqRRformula} directly. For $k=2$, $n=0$, there is no cusp condition to impose, thus $M_2(\Gam)=H^0(X,K_X)$ outright, of dimension $g$ by Riemann-Roch together with $h^1(K_X)=h^0(\mathcal O_X)=1$.
\end{proof}
%% 
%%%%%%%%%%%%%%%%%%%%%%%%%%%%%%%%%%%%%%%%%%%%%%%%%%%%%%%%%%%%%%%%%%%%%%%%%%%%%%%%%%
%% 
In the cocompact, torsion-free case ($r=0$, $n=0$)
 this specializes to the formulas  
$\dim M_0(\Gam)=1$, $\dim M_2(\Gam)=g$, $\dim M_{2i}(\Gam)=(2i-1)(g-1)$ for $i\ge 2$, which we will use repeatedly in \S\ref{secexamples}.
%%
%%%%%%%%%%%%%%%%%%%%%%%%%%%%%%%%%%%%%%%%%%%%%%%%%%%%%%%%%%%%%%%%%%%%%%%%%%%%%%%%%%%%
%% 
\subsection{Quasi-automorphic forms and the existence of $E_2^\Gam$}
\label{secE2}
%% 
%%%%%%%%%%%%%%%%%%%%%%%%%%%%%%%%%%%%%%%%%%%%%%%%%%%%%%%%%%%%%%%%%%%%%%%%%%%%%%%%%%%%
%% 
\begin{definition}
[cf. {\cite[Appendix A]{BCFMN}}] 
\label{defquasiscalar}
A holomorphic function $f\colon\HH\to\CC$ is a \emph{quasi-automorphic form of weight $k$ and depth $\le s$} for $\Gam$ if
$$f|_k\gamma(\tau)=\sum_{m=0}^sX(\gamma,\tau)^mQ_m(f)(\tau),$$
for holomorphic functions $Q_m(f)\colon\HH\to\CC$ (necessarily $Q_0(f)=f$), and $f$ is meromorphic at the cusps. Let $Q_k(\Gam)$ denote the resulting space and $Q(\Gam)=\bigoplus_{k\ge 0}Q_k(\Gam)$.
\end{definition}
This definition makes sense for any Fuchsian group. The question is whether $Q(\Gam)$  has the same rich structure that it has for $\Gam(1)$, where $Q=\CC[E_2,E_4,E_6]$ is a free polynomial extension of $M=\CC[E_4,E_6]$ by the weight-$2$ depth-$1$ generator $E_2$. We will see in Theorem \ref{thmatiyah} below that this is a subtle question, with a sharp dichotomy between groups with a cusp and cocompact groups.
%%
%%%%%%%%%%%%%%%%%%%%%%%%%%%%%%%%%%%%%%%%%%%%%%%%%%%%%%%%%%%%%%%%%%%%%%%%%%%%%%%%%%%%%%%%
%% 
\subsubsection{$E_2^\Gam$ as a connection coefficient}
The construction of $E_2$ for $\Gam(1)$ uses the existence of the nonvanishing holomorphic weight-$12$ cusp form $\Delta=\eta^{24}$: $E_2=\tfrac{12}{2\pi i}\,\theta\log\Delta$. For cocompact $\Gam$ no nonvanishing positive-weight form can exist, by the elementary Riemann-Roch argument already given in \cite{BCFMN}-style reasoning: a nowhere-vanishing $F\in M_w(\Gam)$, $w>0$, would force the corresponding line bundle to have degree $0$, whereas it has positive degree for $g\ge 2$. This rules out the eta-function construction of $E_2$, but does not by itself decide whether a weight-$2$ depth-$1$ quasi-automorphic generator exists by some other means.
We now show, by relating $E_2^\Gam$ to the existence of a holomorphic connection on the (orbifold) canonical bundle $K_X$, that in fact no such generator can exist at all when $\Gam$ is cocompact of genus $g\ge 2$. We work throughout directly with $K_X$, the existing line bundle whose sections of the $k/2$-th tensor power are exactly the weight-$k$ forms $M_k(\Gam)$ of Proposition \ref{propRR}, rather than with a hypothetical ``bundle of weight-one forms'': the latter does not exist as a line bundle on $X$ for any $\Gam$, since $-\Id\in\Gam$ acts on a weight-one automorphy factor by $-1$, which is exactly why $M_k(\Gam)=0$ for $k$ odd (Proposition \ref{propRR}). We nonetheless keep the classical symbol $\omega$ as a purely informal name for this non-existent formal ``square root'' of $K_X$, solely because it lets us state the connection $\{D_k\}_k$ below uniformly in the weight $k$. 
 Remark \ref{rmkomeganotation} records this convention precisely.  
%% 
%%%%%%%%%%%%%%%%%%%%%%%%%%%%%%%%%%%%%%%%%%%%%%%%%%%%%%%%%%%%%%%%%%%%%%%%%%%%%%%
%% 
\begin{lemma}
\label{lemE2connection}
Suppose $E_2^\Gam\in Q_2(\Gam)$ is holomorphic on all of $\HH$ (no poles, including at any cusps of $\Gam$) and satisfies $Q_1(E_2^\Gam)=c\ne0$. 
 Then, for every even $k$, the operator
$$D_kf:=\theta f-\frac{k}{2\pi ic}\,E_2^\Gam f,\qquad\theta:=q\,d/dq,$$ 
maps $M_k(\Gam)$ into $M_{k+2}(\Gam)$ (not only into $Q_{k+2}(\Gam)$), and the resulting collection $\{D_k\}_k$ is exactly the data of a holomorphic connection on the line bundle $K_X$: at $k=2$, $D_2\colon M_2(\Gam)=H^0(K_X)\to M_4(\Gam)=H^0(K_X^{\otimes2})$ is a holomorphic connection on $K_X$ itself, and for general even $k$, $D_k$ is the connection induced on $K_X^{\otimes k/2}$ by this same connection on $K_X$, pulled back to $\HH$ in the trivialization by $d\tau$.
\end{lemma}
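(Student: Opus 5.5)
Throughout I read $\theta$ as $\tfrac{1}{2\pi i}\,d/d\tau$, which is the only reading that makes sense on $\HH$ when $\Gam$ has no cusp; at a cusp of width $1$ it agrees with $q\,d/dq$. The proof is a direct computation of how $D_kf$ transforms under $\Gam$, followed by a reinterpretation of the result.

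\emph{Transformation law.} For $f\in M_k(\Gam)$ we have $f(\gamma\tau)=j(\gamma,\tau)^kf(\tau)$. Differentiating in $\tau$ and using $d(\gamma\tau)/d\tau=j(\gamma,\tau)^{-2}$ (since $\det\gamma=1$) together with $\partial_\tau j=c_\gamma$ gives
$$(\theta f)|_{k+2}\gamma=\theta f+\frac{k}{2\pi i}X(\gamma,\tau)f.$$
By hypothesis $E_2^\Gam|_2\gamma=E_2^\Gam+cX(\gamma,\tau)$, with depth exactly $1$. Multiplying by $f$, which transforms with weight $k$,
$$(E_2^\Gam f)|_{k+2}\gamma=E_2^\Gam f+cXf.$$
With the coefficient $\tfrac{k}{2\pi i c}$ the anomalous terms in $D_kf$ cancel exactly, so $(D_kf)|_{k+2}\gamma=D_kf$.

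\emph{Local conditions.} Holomorphy on $\HH$ is clear, because $E_2^\Gam$ is assumed holomorphic on all of $\HH$. At a cusp $\theta$ preserves holomorphic $q_\lambda$-expansions up to the constant factor coming from the width, and $E_2^\Gam$ is assumed holomorphic there, so $D_kf$ is holomorphic at the cusps. At an elliptic point the condition \eqref{eqellipticscalar} is a consequence of the transformation law alone, so it is automatic.

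\emph{Connection structure.} Write $\nabla_k$ for the operator taking $f$ to $D_kf$ in the trivialization of $K_X^{\otimes k/2}$ by $(d\tau)^{k/2}$, with image in $K_X^{\otimes k/2}\otimes K_X$. Two facts show that $\{\nabla_k\}$ is a holomorphic connection, and that these are the connections induced on tensor powers.
\begin{itemize}
\item $D$ is a derivation with additive weights: $D_{k+l}(fg)=(D_kf)g+f(D_lg)$. This is immediate from the Leibniz rule for $\theta$ and from the linearity of the $E_2^\Gam$-term in $k$.
\item $D_k(hf)=(\theta h)f+hD_kf$ for any $\Gam$-invariant $h$ of weight $0$.
\end{itemize}
Both identities are local computations. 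The invariance computation above shows that they are independent of the chosen lift to $\HH$. The key step is to observe that they hold for arbitrary local holomorphic sections $f$ on $\Gam$-stable neighbourhoods, not only for global forms, since the invariance argument is pointwise in $\tau$. Consequently $\nabla_2=d-\tfrac{2}{2\pi i c}E_2^\Gam\,d\tau$ is a well-defined operator on the sheaf of sections of $K_X$, satisfying the Leibniz rule with respect to $\mathcal O_X$.

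\emph{Main obstacle.} The hardest point is exactly this passage from global forms to the sheaf level. It needs the connection $1$-form $-\tfrac{2}{2\pi i c}E_2^\Gam\,d\tau$ to glue to a global object, and that is what the computed cocycle $cX(\gamma,\tau)$ guarantees. It also needs a check at elliptic points and cusps, where the trivialization by $d\tau$ degenerates. My plan there is to use the orbifold convention: work $\Gam$-equivariantly on $\HH$, where the computation is already complete, and treat cusps through the $q_\lambda$-coordinate, using the assumed holomorphy of $E_2^\Gam$ at the cusps.
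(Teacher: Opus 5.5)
Your proof that $D_k$ maps $M_k(\Gam)$ into $M_{k+2}(\Gam)$ is the paper's argument. The anomaly $\tfrac{k}{2\pi i}X(\gamma,\tau)f$ of $\theta f$ cancels the anomaly $cX(\gamma,\tau)f$ of $E_2^\Gam f$; the paper phrases this as $Q_1(D_kf)=0$, using that $Q_1$ is a derivation. The connection is then read off from the $1$-form $\propto E_2^\Gam\,d\tau$ on $\HH$ in the frame $d\tau$, and linearity in $k$ gives the induced connections on tensor powers. Your explicit passage to local sections over $\Gam$-stable open sets, with the Leibniz rule over $\mathcal O_X$, improves on the paper's one-line remark, since an operator on global sections alone is not a connection. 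One normalization slip: with $d$ the exterior derivative, $2\pi i\,D_2f=f'-\tfrac2c E_2^\Gam f$. So the connection form is $-\tfrac2c E_2^\Gam\,d\tau$, not $-\tfrac{2}{2\pi ic}E_2^\Gam\,d\tau$, and it obeys the required gluing rule $a|_2\gamma=a-2X(\gamma,\tau)$ for a connection coefficient in the frame $d\tau$.

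The step that fails is the one you flagged as the main obstacle. Your cusp plan cannot work, because the conclusion is false at cusps.

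\emph{The connection has a pole at each cusp.} Near a cusp of width $\lambda$, $d\tau=\tfrac{\lambda}{2\pi i}\,dq_\lambda/q_\lambda$. Holomorphy of $E_2^\Gam$ in $q_\lambda$ then gives, in the holomorphic frame $dq_\lambda$ of $K_X$, the connection form $\bigl(1-\tfrac{2\lambda}{2\pi i c}E_2^\Gam\bigr)\,dq_\lambda/q_\lambda$. This has a simple pole with residue $1-\tfrac{2\lambda a_0}{2\pi i c}$, where $a_0$ is the constant term of $E_2^\Gam$ there.

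\emph{A concrete example.} Take the torsion-free group $\Gam(7)$ with $E_2^\Gam=E_2$, so $c=12/2\pi i$, $\lambda=7$, $a_0=1$. The residue is $-\tfrac16$ at each of the $24$ cusps, summing to $-4=-\deg K_X$, as the residue theorem requires.

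\emph{Why no fix is possible.} If the connection extended holomorphically across the cusps, the torsion-free argument of Theorem \ref{thmatiyah} would rule out $E_2^\Gam$ for cusped groups of genus $\ge2$ such as $\Gam(7)$. That contradicts Proposition \ref{propE2exists}, and indeed the classical $E_2$. The identifications in the statement also break: for torsion-free $\Gam$ of genus $g\ge2$ with $n\ge1$, $\dim M_4(\Gam)=3g-3+2n\neq 3g-3=h^0(K_X^{\otimes2})$.

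So for $n\ge1$ you only get a holomorphic connection over $Y$, logarithmic at the cusps. The paper's proof has the same blind spot: its ``everywhere finite by hypothesis'' only covers $\HH$. The repair is to restrict the connection assertion to $n=0$, which is all that Theorem \ref{thmatiyah} uses. At elliptic points, working $\Gam$-equivariantly on $\HH$ is exactly the paper's orbifold convention and is fine.
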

%% 
%%%%%%%%%%%%%%%%%%%%%%%%%%%%%%%%%%%%%%%%%%%%%%%%%%%%%%%%%%%%%%%%%%%%%%%%%%%%%%%%
%% 
\begin{proof}
For $f\in M_k(\Gam)$ (thus $Q_0(f)=f$ and $Q_m(f)=0$ for $m\ge 1$), the elementary computation recalled in \cite[Appendix A]{BCFMN} gives $Q_1(\theta f)=\tfrac{k}{2\pi i}f$. Differentiating the automorphy factor $j(\gamma,\tau)^k$ produces exactly this anomaly for any $\Gam$. 
 $Q_1$ is a derivation with respect to products of quasi-automorphic forms, and $Q_1(f)=0$, $Q_1(E_2^\Gam f)=Q_1(E_2^\Gam)f=cf$. Hence $Q_1(D_kf)=\tfrac{k}{2\pi i}f-\tfrac{k}{2\pi ic}\cdot cf=0$, thus $D_kf\in Q_{k+2}(\Gam)$ has depth $0$, i.e., $D_kf\in M_{k+2}(\Gam)$, and $D_kf$ is by definition holomorphic wherever $E_2^\Gam$ and $f$ are, i.e.,  everywhere on $\HH$ with no further poles.  
The collection $\{D_k\}$ depends $\RR$-linearly on $k$ through the single coefficient $\tfrac{1}{2\pi ic}E_2^\Gam$, exactly the compatibility required of the connections induced on the tensor powers $K_X^{\otimes k/2}$ by a single connection on $K_X$ itself. 
In the trivialization by $d\tau$ of the pullback of $K_X$ to the simply connected $\HH$, this connection is literally given by the holomorphic 
$1$-form $\tfrac{1}{2\pi ic}E_2^\Gam(\tau)\,d\tau$, which is everywhere finite by hypothesis.
\end{proof}
%% 
%%%%%%%%%%%%%%%%%%%%%%%%%%%%%%%%%%%%%%%%%%%%%%%%%%%%%%%%%%%%%%%%%%%%%%%%%%%%%%%%%%%%%%%%%%%
%% 
\begin{remark}[a notational clarification: $\omega$ versus $K_X$]
\label{rmkomeganotation}
We record here the precise convention already used above. Since $\Gam\subset\PSL_2(\RR)$, the automorphy factor $j(\gamma,\tau)^k$ is independent of the choice of lift of $\gamma$ to $\SL_2(\RR)$ only for even $k$. This is exactly why $M_k(\Gam)=0$ for odd $k$ (Proposition \ref{propRR}), the two lifts $\pm\tilde\gamma$ being forced to act identically. Consequently the ``bundle $\omega$ of weight-one forms'' does not itself descend to a line bundle on $X=\Gam\backslash\overline\HH$ for any $\Gam$. 
What does descend, for every $\Gam$, is every even tensor power $\omega^{\otimes2j}$, starting with $\omega^{\otimes2}=K_X$: the ordinary canonical bundle when $\Gam$ is torsion-free, the orbifold canonical bundle in general, with degree given by Proposition \ref{propRR}. 
Since $\omega$ itself is never actually used below, only its existing even powers $K_X^{\otimes j}$, retaining the symbol $\omega$ for ``a formal square root of $K_X$''. 
 Lemma \ref{lemE2connection} is already stated and proved directly for $K_X$, and Theorem \ref{thmatiyah} below applies Atiyah's theorem to $K_X$ itself, with no reference to any odd power of $\omega$.
\end{remark}
%%
%%%%%%%%%%%%%%%%%%%%%%%%%%%%%%%%%%%%%%%%%%%%%%%%%%%%%%%%%%%%%%%%%%%%%%%%%%%%%%%%%%%
%% 
\begin{lemma}
[Selberg's lemma and multiplicativity of the orbifold Euler characteristic]
\label{lemselberg} 
Every finitely generated Fuchsian group $\Gam\subset\PSL_2(\RR)$ contains a torsion-free subgroup $\Gam'\le\Gam$ of finite index \textup{(Selberg's lemma \cite{Selberg})}. If $\Gam$ is cocompact of signature $(g;m_1,\dots,m_r;0)$ and $[\Gam:\Gam']=d$, then $\Gam'$ is cocompact and torsion-free of genus $g'$ satisfying
\begin{equation}
\label{kordoba}
2-2g'=d\cdot\chi^{\mathrm{orb}}(\Gam),\qquad \chi^{\mathrm{orb}}(\Gam):=2-2g-\sum_{i=1}^r\Big(1-\frac1{m_i}\Big),
\end{equation}
in particular $g\ge 2$ implies $g'\ge 2$. 
\end{lemma}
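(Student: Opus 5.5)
The first assertion is Selberg's lemma, and the plan is to quote it rather than reprove it. A finitely generated $\Gam\subset\PSL_2(\RR)$ lifts to a finitely generated subgroup $\tilde\Gam\subset\SL_2(\RR)$, which is a finitely generated linear group. Selberg's lemma \cite{Selberg} then gives a torsion-free normal subgroup of finite index in $\tilde\Gam$. Its image $\Gam'$ in $\PSL_2(\RR)$ still has finite index in $\Gam$. To make $\Gam'$ itself torsion-free, I will first intersect with a congruence-type kernel that excludes $-\Id$ and every element whose image in $\PSL_2$ has finite order. In $\PSL_2(\RR)$ a torsion element is elliptic, so its lift has eigenvalues that are roots of unity other than $\pm1$, or is $-\Id$. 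Concretely, the standard proof reduces modulo a suitable prime ideal of the finitely generated ring of matrix entries. It thereby kills all nontrivial roots of unity that can occur as eigenvalues, and the same reduction handles $\pm\Id$ in $\PSL_2$. I will cite this as is.

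For the Euler characteristic formula, I will argue as follows. Since $[\Gam:\Gam']=d<\infty$, $\Gam'$ has the same covolume, scaled by $d$, so $\Gam'$ is again of the first kind. It is cocompact because $\Gam'\backslash\HH\to\Gam\backslash\HH$ is a finite branched cover of a compact space, and it has no cusps since $\Gam$ has none. Being torsion-free, $\Gam'$ acts freely, so $X'=\Gam'\backslash\HH$ is a compact Riemann surface of some genus $g'$. The cleanest route to \eqref{kordoba} is Gauss–Bonnet: the hyperbolic area of a fundamental domain is $\mathrm{vol}(\Gam\backslash\HH)=-2\pi\chi^{\mathrm{orb}}(\Gam)$, the classical Siegel/Poincaré formula for signature $(g;m_1,\dots,m_r;0)$. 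For $\Gam'$ the same formula reads $\mathrm{vol}=2\pi(2g'-2)$. Multiplicativity of covolume under finite index, $\mathrm{vol}(\Gam'\backslash\HH)=d\,\mathrm{vol}(\Gam\backslash\HH)$, then gives $2-2g'=d\,\chi^{\mathrm{orb}}(\Gam)$.

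As an alternative, I can verify the formula by Riemann–Hurwitz for the degree-$d$ map $X'\to X$. Over the elliptic point $\tau_i$, the fibre consists of $d/m_i$ points, each with ramification index $m_i$, because $\Gam'$ contains no nontrivial element of the stabilizer. The fibre therefore contributes $d(1-1/m_i)$ to the ramification. One subtlety is that the fibre could a priori contain points with smaller index. This is excluded precisely because $\Gam'\cap g\langle\gamma_i\rangle g^{-1}=1$ is forced by torsion-freeness, so every orbit of the cyclic stabilizer on the cosets $\Gam/\Gam'$ is free. This coset-orbit count is the only step needing care, and I will spell it out via the action of $\langle\gamma_i\rangle$ on $\Gam/\Gam'$.

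Finally, for $g\ge2$ we have $\chi^{\mathrm{orb}}(\Gam)\le 2-2g<0$, and in fact $\le -2$. Hence $2-2g'\le -2d\le -2$, which gives $g'\ge 2$.
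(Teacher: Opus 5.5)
Your proposal is correct and follows essentially the paper's route: cite Selberg's lemma, then combine multiplicativity of hyperbolic area under the degree-$d$ cover with Gauss--Bonnet, $\mathrm{vol}=-2\pi\chi^{\mathrm{orb}}$, and deduce $g'\ge 2$ from $\chi^{\mathrm{orb}}(\Gam)\le 2-2g\le -2$; your Riemann--Hurwitz count is a valid independent check. One simplification: the extra congruence-kernel step is unnecessary, since the image in $\PSL_2(\RR)$ of a torsion-free subgroup of $\SL_2(\RR)$ is automatically torsion-free (if $\gamma^m=\pm\Id$ then $\gamma^{2m}=\Id$, which forces $\gamma=\Id$).
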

%%
%%%%%%%%%%%%%%%%%%%%%%%%%%%%%%%%%%%%%%%%%%%%%%%%%%%%%%%%%%%%%%%%%%%%%%%%%%%%%%%%%%%%%
%% 
\begin{proof}
Selberg's lemma \cite{Selberg} applies to every finitely generated subgroup of $\GL_n$ over a field of characteristic $0$, in particular to $\Gam$, realized inside $\PSL_2(\CC)$. Finite index in a cocompact group is cocompact, thus $\Gam'$ is cocompact. Being torsion-free, $X':=\Gam'\backslash\HH$ has no elliptic points,  
thus its signature is $(g';\,;0)$ and $\chi^{\mathrm{orb}}(\Gam')=\chi(X')=2-2g'$ exactly, with no orbifold correction.   
The covering $X'\to X$ has degree $d$, and hyperbolic area $\mathcal{A}$ is multiplicative under coverings, $\mathcal{A}(X')=d\cdot\mathcal{A}(X)$. 
 Combined with the Gauss-Bonnet formula $\mathcal{A}=-2\pi\chi^{\mathrm{orb}}$ for hyperbolic $2$-orbifolds, this gives $\chi^{\mathrm{orb}}(\Gam')=d\,\chi^{\mathrm{orb}}(\Gam)$, i.e.,  
\eqref{kordoba}.  
 For $g\ge 2$, each term $1-1/m_i\in(0,1)$ only decreases $\chi^{\mathrm{orb}}(\Gam)$ below $2-2g\le-2$, thus $\chi^{\mathrm{orb}}(\Gam)\le-2$ and hence $2-2g'=d\,\chi^{\mathrm{orb}}(\Gam)\le-2d\le-2$, giving $g'\ge 2$.
\end{proof}
%%
%%%%%%%%%%%%%%%%%%%%%%%%%%%%%%%%%%%%%%%%%%%%%%%%%%%%%%%%%%%%%%%%%%%%%%%%%%%%%%%%%%%%%%
%% 
\begin{theorem}[Atiyah obstruction]
\label{thmatiyah}
Let $\Gam$ be cocompact and let $g\ge 2$ be the genus of $X=\Gam\backslash\HH$. Then there is no holomorphic function $E_2^\Gam$ on $\HH$ satisfying  
$E_2^\Gam|_2\gamma=E_2^\Gam+cX(\gamma,\tau)$ for any nonzero constant $c$ and all $\gamma\in\Gam$ regardless of whether $\Gam$ has elliptic points. Equivalently, $Q_1\equiv0$ on $Q_2(\Gam)$, i.e., 
$$Q_2(\Gam)=M_2(\Gam).$$ 
\end{theorem}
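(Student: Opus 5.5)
Argue by contradiction: suppose a holomorphic $E_2^\Gam$ on $\HH$ satisfies $E_2^\Gam|_2\gamma=E_2^\Gam+cX(\gamma,\tau)$ for some $c\neq0$ and all $\gamma\in\Gam$. The first step is to reduce to the torsion-free case via Lemma \ref{lemselberg}. Choose a torsion-free subgroup $\Gam'\le\Gam$ of finite index $d$. It is cocompact of genus $g'\ge2$. The transformation law restricts verbatim to $\gamma\in\Gam'$, so $E_2^\Gam$ is also such a function for $\Gam'$. No orbifold issues remain: $X'=\Gam'\backslash\HH$ is an ordinary compact Riemann surface and $K_{X'}$ an ordinary line bundle of degree $2g'-2>0$. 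Cusp conditions are vacuous since $n=0$.

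Next, on $\Gam'$, build a holomorphic connection on $K_{X'}$ directly rather than quoting Lemma \ref{lemE2connection}. That lemma's $\theta=q\,d/dq$ is only suggestive without cusps; I will use $\frac{1}{2\pi i}\frac{d}{d\tau}$. For a local section written as $f(\tau)\,d\tau$ on $\HH$, set
$$\nabla(f\,d\tau):=\Big(f'(\tau)-\tfrac{2}{c}E_2^\Gam(\tau)f(\tau)\Big)\,d\tau\otimes d\tau .$$
The key check is $\Gam'$-equivariance, for which I use the normalization $j(\gamma,\tau)^{-2}$ with $\det\gamma=1$. Under $\tau\mapsto\gamma\tau$, a weight-2 function transforms with $(f|_2\gamma)'=j^{-2}f'(\gamma\tau)j^{-2}-2cj^{-3}f(\gamma\tau)$, so the weight-4 anomaly is $2X(\gamma,\tau)\,f|_2\gamma$ with the sign fixed by the convention. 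The term $-\tfrac{2}{c}E_2^\Gam f$ contributes $-\tfrac{2}{c}\cdot cX\,f$, which cancels this anomaly exactly. Hence $\nabla$ is well defined on $K_{X'}$, holomorphic, and satisfies the Leibniz rule. It is precisely the $k=2$ case of Lemma \ref{lemE2connection}, with the constant matched to $c$. Tracking this bookkeeping is routine, but it must be done carefully: the factor relating the cocycle $X(\gamma,\tau)$ to the derivative of $j^{-k}$ must give exact cancellation, and the result must not depend on the sign of $c$.

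Then conclude. By Atiyah's theorem \cite{Atiyah}, a line bundle on a compact Riemann surface admitting a holomorphic connection has degree $0$. Equivalently and more elementarily, the difference of a holomorphic connection and a $C^\infty$ Chern connection is a $(1,0)$-form whose $\bar\partial$ represents the first Chern class, so the class vanishes. Yet $\deg K_{X'}=2g'-2\ge2$, a contradiction. This proves the first assertion.

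For the equivalence $Q_2(\Gam)=M_2(\Gam)$, take $f\in Q_2(\Gam)$ of depth $\le s$. The cocycle identity in $\gamma$ forces the standard structure of the $Q_m$ \cite[Appendix A]{BCFMN}: $Q_s(f)$ is a modular form of weight $2-2s$, hence zero if $s\ge2$ by Proposition \ref{propRR}. Inducting downward, depth is $\le1$ and $Q_1(f)\in M_0(\Gam)=\CC$ is a constant $c$. If $c\ne0$, then $f$ is exactly a forbidden $E_2^\Gam$, holomorphic because cocompactness removes cusp conditions. Hence $c=0$ and $f\in M_2(\Gam)$. Conversely, $Q_2=M_2$ trivially excludes $E_2^\Gam$.

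The main obstacle I anticipate is this last structural step, namely that $Q_m(f)$ are themselves quasi-automorphic of weight $k-2m$. I would derive it by expanding $f|_k(\gamma_1\gamma_2)$ using $X(\gamma_1\gamma_2,\tau)=X(\gamma_2,\tau)+j(\gamma_2,\tau)^{-2}X(\gamma_1,\gamma_2\tau)$ and comparing coefficients.
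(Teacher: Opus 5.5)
Your proof is correct and follows the paper's route. You reduce via Lemma~\ref{lemselberg} to a torsion-free cocompact $\Gam'$ of genus $g'\ge 2$, use $f\,d\tau\mapsto(f'-\tfrac{2}{c}E_2^\Gam f)\,d\tau\otimes d\tau$ as a holomorphic connection on $K_{X'}$ (the $k=2$ case of Lemma~\ref{lemE2connection}; your cancellation check is right), and contradict Atiyah's theorem since $\deg K_{X'}=2g'-2>0$.

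Your treatment of the equivalence $Q_2(\Gam)=M_2(\Gam)$ is more complete than the paper's one-line argument, which tacitly assumes that $Q_1(f)$ is a constant and that depth $\ge 2$ cannot occur. Your cocycle argument showing $Q_s(f)\in M_{2-2s}(\Gam)$ supplies exactly this. Comparing coefficients needs $\{X(\gamma,\tau):\gamma\in\Gam\}$ to be infinite for fixed $\tau$, which holds because $\Gam$ is non-elementary. Keep the lower-left entry of $\gamma$ notationally distinct from the anomaly constant $c$.
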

%% 
%%%%%%%%%%%%%%%%%%%%%%%%%%%%%%%%%%%%%%%%%%%%%%%%%%%%%%%%%%%%%%%%%%%%%%%%%%%%%%%%%%%
%% 
\begin{proof}
Suppose such an $E_2^\Gam$ existed. Then we have the following cases. 

\emph{Torsion-free case ($r=0$).} Since $\Gam$ is cocompact, $E_2^\Gam$ is automatically holomorphic at every point of $\HH$ with no cusps to worry about, thus Lemma 
\ref{lemE2connection} produces a holomorphic connection on $K_X=\omega^{\otimes2}$
 (see Remark \ref{rmkomeganotation}) on the compact curve $X$. 
 Here weight-$2$ forms are exactly holomorphic differentials, as $\Gam$ has no elliptic points to contribute orbifold corrections (Proposition \ref{propRR}). By Atiyah's theorem \cite{Atiyah}, a holomorphic line bundle on a compact Riemann surface admits a holomorphic connection if and only if it has degree $0$. Here $\deg K_X=2g-2>0$ for $g\ge 2$, thus we get a contradiction.

\emph{*General case ($r\ge 0$, elliptic points allowed).} 
One could try to extend the torsion-free argument directly, by proving an orbifold version of Atiyah's theorem for the orbifold canonical bundle $K_X$. This is possible in principle, but only after first setting up the relevant orbifold formalism (e.g., passing to the Deligne-Mumford stack of $X$) and proving the needed orbifold analogue of Atiyah's theorem itself. 
We instead give an elementary reduction to the torsion-free case already established, which uses no orbifold version of Atiyah's theorem at all. By Lemma \ref{lemselberg}, fix a torsion-free finite-index subgroup $\Gam'\le\Gam$. It is cocompact of genus $g'\ge 2$. The function $E_2^\Gam$ satisfies $E_2^\Gam|_2\gamma=E_2^\Gam+cX(\gamma,\tau)$ for every $\gamma\in\Gam$, hence, in particular, for every $\gamma\in \Gam'\subseteq\Gam$.
 Thus,  $E_2^\Gam$ - literally the same holomorphic function on $\HH$, with the same nonzero constant $c$ - is also a holomorphic weight-$2$ depth-$1$ quasi-automorphic generator for $\Gam'$. 
 As $\Gam'$ is torsion-free cocompact of genus $g'\ge 2$, the torsion-free case already proved shows no such generator can exist for $\Gam'$. Thus we have a contradiction, obtained using only the ordinary (non-orbifold) Atiyah obstruction, now applied on the smooth curve $X'=\Gam'\backslash\HH$.

In either case no such $E_2^\Gam$ exists. The reformulation $Q_2(\Gam)=M_2(\Gam)$ is immediate: any $f\in Q_2(\Gam)$ with $Q_1(f)\ne0$ is, by definition, exactly such an $E_2^\Gam$, after rescaling by $Q_1(f)^{-1}$, which we have just excluded. 
\end{proof}
%%
%%%%%%%%%%%%%%%%%%%%%%%%%%%%%%%%%%%%%%%%%%%%%%%%%%%%%%%%%%%%%%%%%%%%%%%%%%%%%%%%%%%%%%%%%%%%%
%% 
We should mention that four separate things need proof here:
  holomorphy of the regularized value at $s=0$, that the anomaly is 
 exactly a nonzero constant multiple of $X(\gamma,\tau)$, that the resulting form has depth  exactly $1$, and that the resulting class generates all quasi-automorphic forms, and that the last of these, polynomial freeness. It is not just a minor extra task but is what  
Theorems \ref{thmdoubledQVOA}, \ref{thmkernel}, \ref{thmsurjectivity}, \ref{thmPop},
 and Corollary \ref{cordimformula} actually depend on. 
 We address all four, splitting the arguments into a precisely-stated analytic input, which we cite rather than rederive, and its algebraic consequences, for which we now give complete, self-contained proofs.
%%
%%%%%%%%%%%%%%%%%%%%%%%%%%%%%%%%%%%%%%%%%%%%%%%%%%%%%%%%%%%%%%%%%%%%%%%%%%%%%%%%%%%%%%%%%%
%% 
\begin{lemma}
[Hecke's regularization of the weight-$2$ parabolic Eisenstein series; classical]
\label{lemheckefact}
Let $\Gam$ be a Fuchsian group of the first kind with a cusp at $\mathfrak a$, conjugating $\Gam$ if necessary, normalize $\mathfrak a=\infty$ with stabilizer $\Gam_\infty=\langle\pm T^h\rangle$, $T^h\tau=\tau+h$. For $\mathrm{Re}(s)>0$, the series 
$$E_{2,\mathfrak a}(\tau,s):=\sum_{\gamma\in\Gam_\infty\backslash\Gam}(\Im\gamma\tau)^s\,j(\gamma,\tau)^{-2},$$  
converges absolutely and locally uniformly on $\HH$. 
%%%% 
 It continues meromorphically in $s$ to a neighborhood of $s=0$, with at worst a simple pole there of $\tau$-independent residue, and its finite part at $s=0$, 
$$E_{2,\mathfrak a}(\tau):=\lim_{s\to0}\Big(E_{2,\mathfrak a}(\tau,s) 
-{s}^{-1}\mathrm{Res}_{s=0}E_{2,\mathfrak a}(\cdot,s)\Big),$$
is a holomorphic function of $\tau$ on all of $\HH$ (not only real-analytic), has a holomorphic $q_\lambda$-expansion at every cusp of $\Gam$ (leading coefficient $1$ at $\mathfrak a$. Other constant terms given by the Kronecker limit formula for $\Gam$), and satisfies, for an explicit nonzero constant $\kappa$ depending only on the normalization of the subtracted polar term, 
$$E_{2,\mathfrak a}|_2\delta(\tau)=E_{2,\mathfrak a}(\tau)+\kappa\cdot X(\delta,\tau),\qquad\delta\in\Gam,$$ 
for $\Gam=\Gam(1)$, $\kappa=12/2\pi i$ in the standard normalization. 
\end{lemma}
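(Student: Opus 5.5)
The plan is to obtain everything from the Fourier expansion of $E_{2,\mathfrak a}(\tau,s)$ at each cusp. The value at $s=0$ will turn out to be a holomorphic function plus an explicit multiple of $1/y$. The anomalous transformation law then follows by a one-line computation, and the nonvanishing of $\kappa$ is reduced to the volume residue of the weight-$0$ Eisenstein series.

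\emph{Step 1 (convergence and Fourier expansion).} For $\mathrm{Re}(s)>0$ the terms are bounded by $(\Im\gamma\tau)^{s}|j(\gamma,\tau)|^{-2}=y^{-1}(\Im\gamma\tau)^{1+s}$. The sum $\sum_{\Gam_\infty\backslash\Gam}(\Im\gamma\tau)^{\sigma}$ converges locally uniformly for $\sigma>1$, which gives absolute and locally uniform convergence. Next I will compute the Fourier expansion of $E_{2,\mathfrak a}(\tau,s)$ at a cusp $\mathfrak b$ of width $\lambda$, using the Bruhat decomposition of $\Gam_\infty\backslash\Gam/\Gam_{\mathfrak b}$ exactly as in the weight-$0$ case (Selberg, Roelcke, Hejhal). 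This yields a constant term
$$\delta_{\mathfrak a\mathfrak b}\,y^{s}+\varphi_{\mathfrak a\mathfrak b}(s)\,y^{-1-s}$$
and non-constant terms $\sum_{m\ne0}\varphi_{\mathfrak a\mathfrak b}(m,s)\,W_{m}(y,s)\,e^{2\pi i m x/\lambda}$. The coefficients are Kloosterman-type Dirichlet series times ratios of Gamma functions. The meromorphic continuation to a neighbourhood of $s=0$ is the cited Selberg–Roelcke input. The cleanest way to make it concrete is to write $E_{2,\mathfrak a}(\tau,s)$ as a constant multiple of the Maass raising operator applied to the weight-$0$ series $E_{0,\mathfrak a}(\tau,s+1)$. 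This transports the known continuation of the weight-$0$ series, including its simple pole at $s+1=1$ with residue $1/\mathrm{vol}(\Gam\backslash\HH)$. Any polar part at $s=0$ is then visibly independent of $\tau$ (or of the form constant$\cdot y^{-1}$, which is handled in Step 2), and the finite part is well defined.

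\emph{Step 2 (structure at $s=0$ and the transformation law).} At $s=0$ the Whittaker functions degenerate: for $m>0$ one gets $W_m(y,0)\propto e^{-2\pi m y/\lambda}$, while the $m<0$ contributions vanish because of a Gamma factor $1/\Gamma(s)$. Hence the finite part has the form
$$E^*(\tau)=H(\tau)+\frac{\beta}{y},$$
with $H$ holomorphic on $\HH$ with a holomorphic $q_\lambda$-expansion at each cusp, with leading coefficient $1$ at $\mathfrak a$. The series is weight-$2$ invariant for every $s$, so $E^*$ is weight-$2$ invariant, and the paper's $E_{2,\mathfrak a}$ is the holomorphic part $H$. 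Using $\bar j=j-2icy$, one computes
$$\frac{1}{\Im\gamma\tau}\,j(\gamma,\tau)^{-2}=\frac{\bar j}{y\,j}=\frac1y-2i\,X(\gamma,\tau).$$
Therefore $H|_2\gamma=H+2i\beta\,X(\gamma,\tau)$, i.e.\ $\kappa=2i\beta$. For $\Gam(1)$, comparing with the classical $E_2^*=E_2-3/(\pi y)$ recovers $\kappa=12/2\pi i$.

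\emph{Step 3 (main obstacle: $\kappa\ne0$).} I expect the real work to be showing $\beta\ne0$, since everything else is bookkeeping once the expansion is written down. My plan is to use the raising-operator identity from Step 1. The coefficient of $y^{-1}$ in the constant term of $E_{2,\mathfrak a}(\tau,0)$ comes from applying the raising operator to the constant term $y^{s+1}+\varphi^{(0)}_{\mathfrak a\mathfrak a}(s+1)y^{-s}$ of the weight-$0$ series. The $y^{s+1}$ term is annihilated at $s=0$ up to a regular factor. The $y^{-s}$ term gives a multiple of $s\,\varphi^{(0)}_{\mathfrak a\mathfrak a}(s+1)$, whose limit as $s\to0$ is the residue $1/\mathrm{vol}(\Gam\backslash\HH)\ne0$. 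This should yield $\beta=-\frac{\pi}{\mathrm{vol}(\Gam\backslash\HH)}$ up to a normalization-dependent nonzero factor, and hence $\kappa\ne0$. If the normalizations get tangled, the fallback is a Stokes argument. Integrate $\partial_{\bar\tau}$ of the invariant $(1,1)$-form built from $E^*$ over a truncated fundamental domain. The interior contribution is $\beta$ times the area, while the boundary contribution is the constant term at the cusps, so $\beta=0$ would force a contradiction with the leading coefficient $1$ at $\mathfrak a$.
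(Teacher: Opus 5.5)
Your analysis is mathematically right, but it does not prove the lemma as written. The point of departure is the sentence in Step~2 identifying ``the paper's $E_{2,\mathfrak a}$'' with the holomorphic part $H$. The lemma \emph{defines} $E_{2,\mathfrak a}(\tau)$ as the finite part at $s=0$, i.e.\ as your $E^*=H+\beta/y$. Your own Steps~2--3 show that this function is exactly $|_2$-invariant and, because $\beta\neq0$, not holomorphic. Already for $\Gam(1)$ it is $E_2-3/(\pi y)$.

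So the literal claims are false, and no argument can establish them: the finite part is not holomorphic, and it does not satisfy $E_{2,\mathfrak a}|_2\delta=E_{2,\mathfrak a}+\kappa X(\delta,\tau)$ with $\kappa\neq0$. The statement is in fact inconsistent on its face. Since $E_{2,\mathfrak a}(\cdot,s)|_2\delta=E_{2,\mathfrak a}(\cdot,s)$ for every $s$, a $\tau$-independent residue is a weight-$2$ invariant constant, hence $0$. The finite part is then exactly invariant, which forces $\kappa X(\delta,\tau)\equiv0$.

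You should say this explicitly and state what you actually prove. Namely, $H:=E^*-\beta/y$ is holomorphic on $\HH$ and has constant term $1$ at $\mathfrak a$. It has holomorphic expansions at the cusps in the usual quasimodular sense, and it satisfies $H|_2\delta=H+2i\beta\,X(\delta,\tau)$ with $\beta\neq0$. This corrected lemma is exactly what Proposition~\ref{propE2exists} uses, so nothing downstream is lost.

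The comparison with the paper's argument, which is a citation plus a sketch, is instructive. The paper attributes holomorphy of the finite part to Selberg--Roelcke, while conceding it has not excluded the nearly holomorphic shape $f_1+f_0/(4\pi\Im\tau)$. That shape is precisely what occurs, with $f_0=4\pi\beta$. The paper then derives $\kappa\neq0$ from a nonzero $\tau$-independent residue at $s=0$ proportional to $1/\mathrm{vol}(\Gam\backslash\HH)$. Your identity $E_{2,\mathfrak a}(\tau,s)=\frac{2i}{s+1}\partial_\tau E_{0,\mathfrak a}(\tau,s+1)$ shows that this residue is actually zero, because $\partial_\tau$ kills the constant residue of the weight-$0$ series. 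The volume enters only through the $y^{-1}$ coefficient $\beta$ of the constant term. So your route is not merely different: it puts the volume term where it actually lives, and it is the version of the argument that works.

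Three small corrections:
\begin{itemize}
\item There is no polar part at $s=0$ at all. Drop the hedge about a polar part of the form constant$\cdot y^{-1}$; such a term would not be weight-$2$ invariant anyway.
\item The $y^{s+1}$ term is not annihilated. In fact $\frac{2i}{s+1}\partial_\tau y^{s+1}=y^s\to1$, and this is where the leading coefficient $1$ comes from.
\item With $\Gam_\infty=\langle\pm T^h\rangle$ one gets exactly $\beta=-h/\mathrm{vol}(\Gam\backslash\HH)$, with no factor of $\pi$. For $\Gam(1)$ this is $-3/\pi$, giving $\kappa=2i\beta=12/2\pi i$. Your Stokes fallback yields the same value directly: the interior term is $\beta\,\mathrm{vol}(\Gam\backslash\HH)$, and the horocycle at $\mathfrak a$ contributes $-h$ in the limit.
\end{itemize}
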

%% 
%%%%%%%%%%%%%%%%%%%%%%%%%%%%%%%%%%%%%%%%%%%%%%%%%%%%%%%%%%%%%%%%%%%%%%%%%%%%%%%%
%% 
\begin{proof}
This is the weight-$2$ boundary case of the classical theory of parabolic (cusp-attached) real-analytic Eisenstein series for Fuchsian groups of the first kind. Absolute convergence for $\mathrm{Re}(s)>0$ is the standard threshold $\mathrm{Re}(2s+k)>2$ at $k=2$ (\cite[Ch.3]{Iwaniec}).   
For weight $k>2$ this threshold is already negative. Thus, the analogous series converges absolutely at $s=0$ with no continuation needed.  
Holomorphy property is immediate which is exactly why $k=2$ is the one delicate boundary case, 
and why Hecke's regularization works perfectly there.  
As for the meromorphic continuation to $s=0$, the fact that the residue there is $\tau$-independent (a constant-term/covolume computation), and the holomorphy in $\tau$ of the finite part are established, for cofinite Fuchsian groups of arbitrary signature and arbitrary weight, in the foundational work of Selberg and Roelcke \cite{Roelcke} on the eigenvalue problem for automorphic forms on the hyperbolic plane, and are treated systematically in \cite{Kubota}.   
For the case $\Gam=\Gam(1)$ one has Hecke's original computation \cite{Hecke}. 
We do not reproduce this continuation argument here: doing so correctly requires the general spectral theory of the weight-$k$ hyperbolic Laplacian, in particular to rule out that the finite part above is nearly holomorphic, i.e., of the shape $f_1(\tau)+f_0(\tau)/(4\pi\Im\tau)$ for holomorphic $f_0$, $f_1$, 
 as for the Maass-Shimura completion of Remark \ref{rmkmaassshimura}, rather than holomorphic. This is outside the scope of the present paper. 
We do, however, sketch why the residue is forced to be $\tau$-independent and nonzero, so that this cited analytic input is as transparent as possible even though we do not reprove it. Exactly as for the weight-$0$ Eisenstein series, $E_{2,\mathfrak a}(\tau,s)$ has, at the cusp $\mathfrak a$ itself, a Fourier expansion of the shape $y^s+\varphi(s)y^{1-s}+(\text{terms decaying as }y\to\infty)$, $y=\Im\tau$. Any pole in $s$ can therefore only arise from this constant term, and its residue is accordingly a single power of $y$ times a constant depending on neither $\Re\tau$ nor the decaying Fourier modes. The standard unfolding (Maass-Selberg) computation identifies this residue, up to a fixed combinatorial constant depending only on the weight, with a nonzero multiple of $1/\mathrm{vol}(\Gam\backslash\HH)$: this is exactly the mechanism behind the classical fact $\mathrm{Res}_{s=1}E_{\mathfrak a}(\tau,s)=1/\mathrm{vol}(\Gam\backslash\HH)$ for the weight-$0$ Eisenstein series (see, e.g., \cite[Thm.~6.7]{Iwaniec}), and the weight-$2$ residue at $s=0$ is governed by the same mechanism. In particular $\kappa\ne0$ for every $\Gam$ of the first kind, since $0<\mathrm{vol}(\Gam\backslash\HH)<\infty$ always.
\end{proof}
%% 
%%%%%%%%%%%%%%%%%%%%%%%%%%%%%%%%%%%%%%%%%%%%%%%%%%%%%%%%%%%%%%%%%%%%%%%%%%%%%%%%%5
%% 
\begin{lemma}
[freeness from a single depth-$1$ generator]
\label{lemfreeness}
Let $\Gam$ be any Fuchsian group and suppose $E\in Q_2(\Gam)$ is holomorphic on $\HH$, including at every cusp, with $Q_1(E)=c\ne0$. Then for every $k\ge 0$, and $J=\lfloor k/2\rfloor$, the multiplication map 
$$\mu\colon{\textstyle\bigoplus_{j=0}^{J}}M_{k-2j}(\Gam)\longrightarrow Q_k(\Gam),\qquad (h_j)_{0\le j\le J}\mapsto\sum_{j=0}^Jh_jE^j,$$ 
is a linear isomorphism. Equivalently, $Q(\Gam)=M(\Gam)[E]$ is a free polynomial ring of the single generator $E$ over the graded ring $M(\Gam)$.
\end{lemma}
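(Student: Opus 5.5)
We prove injectivity and surjectivity of $\mu$ separately, both by induction on the depth, using the operators $Q_m$ of Definition \ref{defquasiscalar}.

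First we record the formal properties needed. For $f\in Q_k(\Gam)$ of depth $\le s$, the coefficients $Q_m(f)$ are uniquely determined. To see this, note that $X(\gamma,\tau)=c_\gamma/j(\gamma,\tau)$ takes infinitely many distinct values as $\gamma$ ranges over $\Gam$ at a fixed generic $\tau$, because $\Gam$ is non-elementary. A polynomial in $X$ vanishing for all $\gamma$ therefore vanishes identically. Next we use the standard cocycle identity $X(\gamma\delta,\tau)=X(\delta,\tau)+j(\delta,\tau)^{-2}X(\gamma,\delta\tau)$. Comparing the two ways of computing $f|_k(\gamma\delta)$ then shows that $Q_m(f)\in Q_{k-2m}(\Gam)$, with depth $\le s-m$, and that
$$Q_m(Q_1 f)=(m+1)\,Q_{m+1}(f).$$
In particular $Q_s(f)$ has depth $0$, so $Q_s(f)\in M_{k-2s}(\Gam)$; its holomorphy at cusps follows from that of $f$, since $Q_s(f)$ is read off from $f|_k\gamma$. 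The map $Q_1$ is a derivation, and $Q_m$ is multiplicative in the binomial sense: $Q(fg)=Q(f)Q(g)$ for the generating polynomials $Q(f)=\sum_m Q_m(f)X^m$. Since $E$ has depth $1$ with $Q(E)=E+cX$, we get
$$Q_m(E^j)=\binom{j}{m}c^mE^{j-m}.$$
We also note that the depth of $f\in Q_k(\Gam)$ is bounded by $J=\lfloor k/2\rfloor$. Indeed, $Q_s(f)$ is a nonzero element of $M_{k-2s}(\Gam)$, and $M_w(\Gam)=0$ for $w<0$ by Proposition \ref{propRR}. Moreover, for $k$ odd everything vanishes, because $-\Id$ acts by $-1$; see Remark \ref{rmkomeganotation}.

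For surjectivity we induct on the depth $s$ of $f\in Q_k(\Gam)$. Set $h_s:=c^{-s}Q_s(f)\in M_{k-2s}(\Gam)$ and $f':=f-h_sE^s$. Using the formula for $Q_m(E^j)$ and the fact that $Q_m(h_s)=0$ for $m\ge1$, we get $Q_s(f')=Q_s(f)-c^sh_s=0$, while $Q_m(f')=0$ for $m>s$ still holds. Hence $f'$ has depth $\le s-1$. Its holomorphy and cusp behaviour are inherited from $f$, $h_s$ and $E$, and here the hypothesis that $E$ is holomorphic at every cusp is used. This ensures that $f'$ again satisfies the cusp condition defining $Q_k(\Gam)$. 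The induction terminates at depth $0$, where $f'\in M_k(\Gam)$.

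For injectivity, suppose $\sum_{j=0}^J h_jE^j=0$ with not all $h_j$ zero, and let $t$ be the largest index with $h_t\ne0$. Applying $Q_t$ and using the formula for $Q_m(E^j)$ together with $Q_m(h_j)=0$ for $m\ge1$ gives $c^th_t=0$. This contradicts $c\ne0$. Equivalently, one can expand $\sum_j h_j(E+cX)^j\equiv0$ as a polynomial in $X$ and read off the top coefficient, which is legitimate by the uniqueness of the coefficients $Q_m$. Taking the direct sum over $k$ yields $Q(\Gam)=M(\Gam)[E]$ with $E$ algebraically independent over $M(\Gam)$.

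I expect the main obstacle to be the foundational step rather than the inductions. One must verify rigorously that the coefficients $Q_m(f)$ are well defined, that they are themselves quasi-automorphic of the stated weight and depth, and that they satisfy the cusp conditions. Uniqueness requires the genericity of the values $X(\gamma,\tau)$ for non-elementary $\Gam$. The weight and depth statement requires the cocycle computation with $X(\gamma\delta,\tau)$, which I would carry out once in general; it is the same computation as in \cite[Appendix A]{BCFMN}, and it uses nothing specific to $\Gam(1)$.
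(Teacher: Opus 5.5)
Your proposal is correct and is essentially the paper's proof. It uses the same binomial formula $Q_m(hE^j)=\binom{j}{m}c^mhE^{j-m}$, the same top-coefficient argument for injectivity, and the same depth-lowering subtraction of $c^{-s}Q_s(f)E^s$ for surjectivity. In addition, you justify foundational facts the paper simply imports from Proposition \ref{propdepthfiltration} or asserts: uniqueness of the $Q_m$, $Q_s(f)\in M_{k-2s}(\Gam)$, and depth $\le J$.

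One caveat applies to both you and the paper. You tacitly read the cusp condition in $Q_k(\Gam)$ as holomorphy, as in your step ``holomorphy of $Q_s(f)$ at cusps follows from that of $f$''. Definition \ref{defquasiscalar} literally asks only for meromorphy at the cusps, and under that reading surjectivity is false: the $j$-invariant lies in $Q_0(\PSL_2(\ZZ))$ but not in $M_0(\PSL_2(\ZZ))=\CC$.
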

%% 
%%%%%%%%%%%%%%%%%%%%%%%%%%%%%%%%%%%%%%%%%%%%%%%%%%%%%%%%%%%%%%%%%%%%%%%%%%%%%%%%%%%
%% 
\begin{proof}
Note that $h\in M_{k-2j}(\Gam)$ is holomorphic of depth $0$, thus $h|_{k-2j}\gamma=h$. 
Using $E|_2\gamma=E+cX(\gamma,\tau)$, we obtain 
$$(hE^j)|_k\gamma=h\cdot(E|_2\gamma)^j=h\sum_{m=0}^j\binom jm\big(cX(\gamma,\tau)\big)^mE^{j-m}=\sum_{m=0}^j\binom jmc^m\,X(\gamma,\tau)^m\big(hE^{j-m}\big),$$
 $hE^j\in Q_k(\Gam)_{\le j}$ with
\begin{equation}
\label{eqQmformula}
Q_m(hE^j)=\binom jmc^m\,hE^{j-m}\quad(0\le m\le j),\qquad Q_m(hE^j)=0,\quad m>j.
\end{equation}
By linearity, $\mu$ maps into $Q_k(\Gam)$, and for $f=\mu\big((h_j)_j\big)=\sum_jh_jE^j$,
\begin{equation}
\label{eqQmoff}
Q_m(f)=\sum_{j\ge m}\binom jmc^m\,h_jE^{j-m},\qquad 0\le m\le J.
\end{equation}

%%%%%%%%%%%%%%%%%%%%%%%%%%%%%%%%%%%%%%%%%%%%%%%%%%%%%%%%%%%%%%%%%%%%%%%%%%%%%%%%%%%%%%%%%%%%%
%% 
\emph{Injectivity.} We show $h_J$, $h_{J-1}$, $\dots$, $h_0$ vanish in turn if $f=0$. 
Since $j\le J$ throughout, the sum in \eqref{eqQmoff} at $m=J$ has only the term $j=J$, thus $Q_J(f)=c^Jh_J$, as $f=0$, $h_J=0$. 
 Inductively, suppose $h_J=\cdots=h_{s+1}=0$ has been shown. Then in \eqref{eqQmoff} at $m=s$, only $j=s$ survives, the terms $j>s$ vanish as $h_j=0$, giving $Q_s(f)=c^sh_s$, as $f=0$, $h_s=0$. Downward induction from $s=J$ to $s=0$ gives $h_j=0$ for all $j$, thus $\ker\mu=0$.

%%%%%%%%%%%%%%%%%%%%%%%%%%%%%%%%%%%%%%%%%%%%%%%%%%%%%%%%%%%%%%%%%%%%%%%%%%%%%%%%%%%%%%%%%%5
%% 
\emph{Surjectivity.} Given $f\in Q_k(\Gam)$, we show $f\in\mathrm{im}(\mu)$ by downward induction on $s$, where $f\in Q_k(\Gam)_{\le s}$, Definition \ref{defquasiscalar}. 
Every $f\in Q_k(\Gam)$ lies in $Q_k(\Gam)_{\le J}$, thus the induction starts at $s=J$.   
 Let $h_s:=Q_s(f)\in M_{k-2s}(\Gam)$, which is modular by Proposition 
\ref{propdepthfiltration} for every $\Gam$.  
By \eqref{eqQmformula} with $j=m=s$, $Q_s(h_sE^s)=c^sh_s$, thus $f':=f-c^{-s}h_sE^s$ satisfies $Q_s(f')=Q_s(f)-c^{-s}Q_s(h_sE^s)=h_s-h_s=0$, i.e., $f'\in Q_k(\Gam)_{\le s-1}$.  
Repeating with $s-1$, $s-2$, $\dots$, $0$,   
at each stage subtracting $c^{-j}Q_j(\text{current remainder})\cdot E^j$, terminates after finitely many steps with remainder $0$, exhibiting $f=\sum_{j=0}^Jc^{-j}Q_j(\cdots)E^j\in\mathrm{im}(\mu)$. 
Since $\mu$ is a linear isomorphism for every $k$, multiplicative in $E$ 
and $M(\Gam)$-linear, $Q(\Gam)=\bigoplus_kQ_k(\Gam)$ is the free polynomial ring $M(\Gam)[E]$ of the single generator $E$. 
\end{proof}
%%
%%%%%%%%%%%%%%%%%%%%%%%%%%%%%%%%%%%%%%%%%%%%%%%%%%%%%%%%%%%%%%%%%%%%%%%%%%%%%%%%%%%%%%%%%%
%% 
\begin{proposition}
[existence of $E_2^\Gam$ for groups with a cusp]
\label{propE2exists}
Suppose $\Gam$ has at least one cusp ($n\ge 1$). Then there is a holomorphic function  
$E_2^\Gam\colon\HH\to\CC$ with a $q_\lambda$-expansion (no pole) at every cusp, of weight $2$ and depth exactly $1$, normalized as in \eqref{eqE2anomaly} below.  
Moreover $Q(\Gam)=\bigoplus_{m\ge 0}(E_2^\Gam)^mM(\Gam)$ is, exactly as for $\Gam(1)$, a free $M(\Gam)$-module on $1$, $E_2^\Gam$, $(E_2^\Gam)^2$, $\dots$, isomorphic as a graded ring to $M(\Gam)[E_2^\Gam]$
\begin{equation}
\label{eqE2anomaly}
E_2^\Gam|_2\gamma(\tau)=E_2^\Gam(\tau)+\frac{12}{2\pi i}X(\gamma,\tau), \qquad\gamma\in\Gam.
\end{equation}
\end{proposition}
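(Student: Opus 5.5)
The plan is to build $E_2^\Gam$ directly from the cited analytic input, Lemma \ref{lemheckefact}, and then derive freeness from Lemma \ref{lemfreeness}.

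\emph{Construction.} Pick a cusp $\mathfrak a$ of $\Gam$, which exists since $n\ge1$. Conjugate so that $\mathfrak a=\infty$, and take the regularized finite part $E_{2,\mathfrak a}(\tau)$. Lemma \ref{lemheckefact} supplies four facts:
\begin{itemize}
\item $E_{2,\mathfrak a}$ is holomorphic on $\HH$;
\item it has a holomorphic $q_\lambda$-expansion at every cusp;
\item it satisfies $E_{2,\mathfrak a}|_2\delta=E_{2,\mathfrak a}+\kappa X(\delta,\tau)$ for all $\delta\in\Gam$;
\item the constant satisfies $\kappa\ne0$.
\end{itemize}
I would then set
$$E_2^\Gam:=\frac{12}{2\pi i\,\kappa}\,E_{2,\mathfrak a}.$$
Since the slash operator is linear, this gives \eqref{eqE2anomaly} at once. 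The conjugation is harmless: if $\sigma\in\SL_2(\RR)$ moves $\mathfrak a$ to $\infty$, replace $E_2$ for $\sigma\Gam\sigma^{-1}$ by its $|_2\sigma$-transform. One checks with the cocycle identity $X(\gamma\delta,\tau)=X(\gamma,\delta\tau)j(\delta,\tau)^{-2}+X(\delta,\tau)$ that the anomaly is transported to an anomaly of the same shape. The point of the check is that the correction term is a coboundary-type quantity $X(\sigma,\cdot)$, and it adds a holomorphic term of the required form.

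\emph{Depth exactly $1$.} By Definition \ref{defquasiscalar}, the law \eqref{eqE2anomaly} says $E_2^\Gam\in Q_2(\Gam)$ with $Q_1(E_2^\Gam)=\tfrac{12}{2\pi i}$ and $Q_m=0$ for $m\ge2$. Hence the depth is at most $1$. It is not $0$: if it were, then $X(\gamma,\tau)$ would vanish identically for all $\gamma\in\Gam$, which would force every element to have $c=0$. That is impossible, because a Fuchsian group of the first kind is not contained in a Borel subgroup. Concretely, a hyperbolic or parabolic element not fixing $\infty$ exists, since otherwise $\Gam$ would be elementary.

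\emph{Freeness.} Now apply Lemma \ref{lemfreeness} with $E=E_2^\Gam$ and $c=\tfrac{12}{2\pi i}\ne0$. Its hypotheses are exactly holomorphy on $\HH$ and at the cusps, which we have already established. The lemma gives the linear isomorphism $\bigoplus_j M_{k-2j}(\Gam)\to Q_k(\Gam)$ in each weight. Summing over $k$ yields $Q(\Gam)=\bigoplus_m (E_2^\Gam)^m M(\Gam)$, and the map is compatible with multiplication, so $Q(\Gam)\cong M(\Gam)[E_2^\Gam]$ as graded rings.

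\emph{Main obstacle.} The real difficulty lies entirely in the analytic input: showing that the finite part is genuinely holomorphic rather than nearly holomorphic, and that $\kappa\neq0$. The paper cites this as Lemma \ref{lemheckefact} and does not prove it, so the proof here can take it as given. The step that needs the most care within the proof itself is the conjugation bookkeeping. There I would verify directly that the $|_2\sigma$-transport preserves both the anomaly constant and holomorphy at every cusp.
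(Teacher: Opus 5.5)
Your proposal is the paper's proof: you rescale the finite part $E_{2,\mathfrak a}$ from Lemma \ref{lemheckefact} by $(12/2\pi i)/\kappa$, read off $Q_1(E_2^\Gam)=12/2\pi i\neq0$ to get depth exactly $1$ (your non-elementarity argument ruling out depth $0$ usefully makes explicit what the paper leaves implicit), and deduce freeness from Lemma \ref{lemfreeness}. There is one slip in the conjugation bookkeeping, a step the paper skips entirely: the bare transform $F|_2\sigma$ of the series $F$ for $\sigma\Gam\sigma^{-1}$ does not obey a law of the same shape, because it picks up the extra term $\kappa\big(X(\sigma,\cdot)|_2\gamma-X(\sigma,\cdot)\big)$, so the function that actually satisfies $E|_2\gamma=E+\kappa X(\gamma,\tau)$ for all $\gamma\in\Gam$ is $F|_2\sigma-\kappa X(\sigma,\cdot)$, which is still holomorphic on $\HH$.
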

%% 
%%%%%%%%%%%%%%%%%%%%%%%%%%%%%%%%%%%%%%%%%%%%%%%%%%%%%%%%%%%%%%%%%%%%%%%%%%%%%%%%%%%%%%%%
%% 
\begin{proof}
Apply Lemma \ref{lemheckefact} to a chosen cusp $\mathfrak a$ of $\Gam$, and rescale, 
$E_2^\Gam:=\big((12/2\pi i)/\kappa\big)E_{2,\mathfrak a}$.  
This satisfies \eqref{eqE2anomaly} exactly and inherits holomorphy on $\HH$ and at every cusp of $\Gam$, not only at $\mathfrak a$, from Lemma \ref{lemheckefact}, since the finite part 
$E_{2,\mathfrak a}$ was already shown there to have a holomorphic $q_\lambda$-expansion at every cusp.  
 This proves existence with the stated transformation law, and in particular exhibits 
$E_2^\Gam\in Q_2(\Gam)$ with $Q_0(E_2^\Gam)=E_2^\Gam$, $Q_1(E_2^\Gam)=12/2\pi i\ne 0$. By Definition \ref{defquasiscalar} this is precisely depth $\le 1$, and $Q_1\ne 0$ excludes depth $0$, thus the depth is exactly $1$.  
Freeness of $Q(\Gam)=M(\Gam)[E_2^\Gam]$ is given by Lemma \ref{lemfreeness} applied with $E=E_2^\Gam$, $c=12/2\pi i$, which uses nothing about $E_2^\Gam$ beyond the two facts just established (holomorphy, and $Q_1=12/2\pi i\ne0$) and no property of $\Gam(1)$ whatsoever.
\end{proof}
%% 
%%%%%%%%%%%%%%%%%%%%%%%%%%%%%%%%%%%%%%%%%%%%%%%%%%%%%%%%%%%%%%%%%%%%%%%%%%%%%%%%%%%%%%%%%%
%% 
 Note that freeness of $Q(\Gam)=M(\Gam)[E_2^\Gam]$, the fact on which Theorems \ref{thmdoubledQVOA}, \ref{thmkernel}, \ref{thmsurjectivity}, \ref{thmPop}, and Corollary \ref{cordimformula} depend, is given by Lemma \ref{lemfreeness}. 
%%
%%%%%%%%%%%%%%%%%%%%%%%%%%%%%%%%%%%%%%%%%%%%%%%%%%%%%%%%%%%%%%%%%%%%%%%%%%%%%%%%%%%%%5
%% 
\begin{remark}
\label{rmkwhyfails} 
It is worth mentioning explicitly why the logarithmic-derivative approach, i.e., 
  constructing $E_2^\Gam$ as $\partial\log F$ for a suitable automorphic form $F$ attached to the cusp cannot succeed by any choice of $F$ once $g\ge 1$.  
A nonvanishing on $\HH$ holomorphic form $F\in M_w(\Gam)$, $w>0$, 
 the input to avoid the logarithmic derivative inheriting poles at the zeros of $F$, 
 corresponds to a section of the degree-$w(g-1)$ line bundle $L_w$ on $X$ (Proposition \ref{propRR}) whose divisor is supported  entirely at the cusps. 
 For $g=0$ this is automatic, i.e., line bundles on $\mathbb P^1$ are classified by degree alone, thus any effective divisor of the right degree supported at the cusps already represents $L_w$, and $\Delta$ is exactly this for $\Gam(1)$, $w=12$.  
 For $g\ge 1$, by contrast, $\mathrm{Pic}^d(X)$ is the $g$-dimensional Jacobian variety of $X$ (see, e.g., \cite{Gunning}), and a divisor supported at the finitely many cusps represents only a thin, in general empty for a generic Fuchsian group, subvariety of $\mathrm{Pic}^d(X)$.   
The direct construction above, building $E_2^\Gam$ from an explicit convergent, after regularization, series rather than as the logarithmic derivative of an auxiliary form,
 bypasses this nonexistence issue entirely. 
\end{remark}
%%
%%%%%%%%%%%%%%%%%%%%%%%%%%%%%%%%%%%%%%%%%%%%%%%%%%%%%%%%%%%%%%%%%%%%%%%%%%%%%%%%%%%%%%%%
%% 
We isolate, for use below, the part of the depth-filtration theory that survives  in any case, with no hypothesis on $\Gam$ at all, because it never produces a holomorphic connection and so is never obstructed by Theorem \ref{thmatiyah}.
%% 
%%%%%%%%%%%%%%%%%%%%%%%%%%%%%%%%%%%%%%%%%%%%%%%%%%%%%%%%%%%%%%%%%%%%%%%%%%%%%%%%%%%%
%% 
\begin{proposition}
[classical; unobstructed part of the depth filtration]
\label{propdepthfiltration}
For every Fuchsian group $\Gam$, the natural maps $f\mapsto Q_s(f)$ identify the graded pieces of the depth filtration on $Q_k(\Gam)$ with subspaces 
$$Q_k(\Gam)_{\le s}/Q_k(\Gam)_{\le s-1}\;\hookrightarrow\;M_{k-2s}(\Gam), 
\qquad 0\le s\le k/2,$$ 
so that $\dim Q_k(\Gam)\le\sum_{i=0}^{\lfloor k/2\rfloor}\dim M_{k-2i}(\Gam)$, with equality for every $k$ if $\Gam$ has a cusp (Proposition \ref{propE2exists}). 
 For $\Gam$ cocompact of genus $g\ge 2$, equality already fails at $k=2$ by Theorem \ref{thmatiyah}: $\dim Q_2(\Gam)=\dim M_2(\Gam)=g<g+1=\dim M_0(\Gam)+\dim M_2(\Gam)$.
\end{proposition}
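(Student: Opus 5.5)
The plan is to establish the one genuinely new fact first, namely that for $f\in Q_k(\Gam)_{\le s}$ the top coefficient $Q_s(f)$ is a holomorphic automorphic form of weight $k-2s$. The inequality and the two equality statements then follow by formal arguments and earlier results.

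\emph{Top coefficient is automorphic.} Fix $f\in Q_k(\Gam)_{\le s}$ and $\gamma,\delta\in\Gam$. Compute $f|_k(\gamma\delta)=(f|_k\gamma)|_k\delta$ in two ways, using the cocycle identity for $X$,
$$X(\gamma\delta,\tau)=X(\delta,\tau)+j(\delta,\tau)^{-2}X(\gamma,\delta\tau),$$
which follows from $j(\gamma\delta,\tau)=j(\gamma,\delta\tau)j(\delta,\tau)$ by differentiating in $\tau$. Substituting $f|_k\gamma=\sum_m X(\gamma,\cdot)^mQ_m(f)$ and expanding $(f|_k\gamma)|_k\delta$, the factor $j(\delta,\tau)^{-k}$ distributes as $j(\delta,\tau)^{-2m}\cdot j(\delta,\tau)^{-(k-2m)}$. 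This gives
$$\sum_m X(\gamma,\delta\tau)^m j(\delta,\tau)^{-2m}\,\big(Q_m(f)|_{k-2m}\delta\big)(\tau).$$
Now expand $Q_m(f)|_{k-2m}\delta$ again using the quasi-automorphy of $Q_m(f)$. That $Q_m(f)$ is itself quasi-automorphic is part of the standard theory in \cite[Appendix A]{BCFMN}, which I would reprove by the same comparison. Comparing the coefficients of $X(\gamma,\delta\tau)^s j(\delta,\tau)^{-2s}$ in the two expansions yields $Q_s(f)|_{k-2s}\delta=Q_s(f)$.

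To justify this coefficient comparison, use the fact that, for fixed $\delta$ and $\tau$, the quantity $X(\gamma,\delta\tau)$ takes infinitely many values as $\gamma$ ranges over $\Gam$. This holds because $\Gam$ is nonelementary, so the lower-left entries $c$ are unbounded. The two sides are then polynomials in this variable that agree at infinitely many points, so their coefficients agree.

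\emph{Holomorphy conditions for $Q_s(f)$.} Holomorphy on $\HH$ holds by definition. The behaviour at cusps follows by conjugating the cusp to $\infty$. There $X(T^h,\tau)=0$, so $Q_s(f)$ is $h$-periodic. Moreover, $Q_s(f)$ can be recovered as a finite linear combination of values $f|_k\gamma$ for suitable $\gamma$, which are meromorphic at the cusp. The elliptic condition \eqref{eqellipticscalar} is automatic from invariance.

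\emph{Injectivity of the graded map.} The kernel of $Q_s$ on $Q_k(\Gam)_{\le s}$ is exactly $Q_k(\Gam)_{\le s-1}$. This uses uniqueness of the coefficients $Q_m(f)$, which again follows from the infinitude of the values of $X(\gamma,\tau)$. So $Q_s$ induces the claimed injection into $M_{k-2s}(\Gam)$. Summing dimensions over the filtration, which has length at most $\lfloor k/2\rfloor$, gives the inequality. The depth is bounded by $k/2$ because $Q_s(f)$ lies in $M_{k-2s}=0$ for $k-2s<0$.

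\emph{Equality statements.} Equality in the cusped case is Lemma \ref{lemfreeness} combined with Proposition \ref{propE2exists}. One caveat: the surjectivity step of Lemma \ref{lemfreeness} cites the present proposition, but only for the injection part just proved, so there is no circularity. The failure at $k=2$ in the cocompact case is Theorem \ref{thmatiyah} together with $\dim M_0=1$ and $\dim M_2=g$ from Proposition \ref{propRR}.

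\emph{Main obstacle.} The main difficulty is the meromorphy of $Q_s(f)$ at the cusps, and in particular holomorphy when $f$ is only assumed meromorphic at the cusps. I would first try to express $Q_s(f)$ via Bol-type iterated derivatives or the above finite-difference recovery in a neighbourhood of the cusp, then control the $q_\lambda$-expansion. If that fails, I would restrict to forms holomorphic at the cusps, which is all that the later sections use.
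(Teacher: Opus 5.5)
Your algebraic core is correct, and it takes a more self-contained route than the paper. The paper's proof only cites the ``elementary half'' of Shimura's theory of nearly holomorphic forms for the embedding, asserting that this uses nothing beyond $Q_s$ being well defined and additive. It then refers, as you do, to Proposition~\ref{propE2exists} and Theorem~\ref{thmatiyah} for the two equality statements. Additivity alone does not give automorphy of the top coefficient. Your argument, via $X(\gamma\delta,\tau)=X(\delta,\tau)+j(\delta,\tau)^{-2}X(\gamma,\delta\tau)$ and comparison of coefficients in $Y:=j(\delta,\tau)^{-2}X(\gamma,\delta\tau)$, supplies what the citation hides: uniqueness of the $Q_m(f)$ (hence $\ker Q_s=Q_k(\Gam)_{\le s-1}$), the invariance $Q_s(f)|_{k-2s}\delta=Q_s(f)$, and the role of non-elementarity.

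Two small corrections. First, you need no quasi-automorphy of the lower $Q_m(f)$ as an input. The $Y^s$-coefficient gives the invariance directly, and the $Y^m$-coefficient yields $Q_m(f)|_{k-2m}\delta=\sum_{j\ge m}\binom jm X(\delta,\cdot)^{j-m}Q_j(f)$ as output. Second, unboundedness of $c$ is not the right reason for infinitely many values. If a hyperbolic $\eta=\mathrm{diag}(\lambda,\lambda^{-1})$ lies in $\Gam$, then $c$ is unbounded along $\eta^n\gamma$ while $X(\eta^n\gamma,w)$ stays constant. Use instead $X(\gamma,w)=1/(w-\gamma^{-1}\infty)$ for $c\ne0$, together with infiniteness of the orbit $\Gam\infty$. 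Your remark that Lemma~\ref{lemfreeness} uses only the embedding part, so there is no circularity, is correct.

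The genuine gap is at the cusps, and it originates in the definitions. With Definition~\ref{defquasiscalar} as written (mere meromorphy at the cusps) the statement is false, so the first branch of your ``main obstacle'' cannot succeed. Indeed $j\in Q_0(\PSL_2(\ZZ))$ with $Q_0(j)=j\notin M_0$, and $E_2^7/\Delta$ has weight $2$ and depth $7$.

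Your fallback, read as ``$f$ has a holomorphic $q_\lambda$-expansion at each cusp'', is still insufficient. Take $f:=jE_2+\theta j$. It is holomorphic on $\HH$ with $q$-expansion $720+O(q)$ at the only cusp, yet $Q_1(f)=\tfrac{12}{2\pi i}j$ has a pole. So under that reading even the cusped equality fails for $\PSL_2(\ZZ)$. This is exactly the circularity in your claim that the $f|_k\gamma$ are meromorphic at the cusp. With $S\tau=-1/\tau$, $f|_2S=f+\tfrac{6}{\pi i\tau}j$ grows exponentially as $\Im\tau\to\infty$. The reason is that $f|_k\gamma$ at $\infty$ sees $f$ near $\gamma\infty$, which the transformation law controls only through all the $Q_m(f)$.

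The fix is to impose the cusp condition on every $Q_m(f)$, or to use a slash-stable growth condition such as $|f(\tau)|\le C\cosh^A d(i,\tau)$ on $\HH$, with $d$ the hyperbolic distance. $E_2$ satisfies this and $jE_2+\theta j$ violates it. With the growth condition your Vandermonde recovery becomes rigorous. Fix $\gamma_0,\dots,\gamma_s\in\Gam$ with $c\ne0$ and distinct $\xi_i=\gamma_i^{-1}\infty$. The Vandermonde determinant in $X(\gamma_i,\tau)=1/(\tau-\xi_i)$ is $\prod_{i<l}(\xi_l-\xi_i)/((\tau-\xi_i)(\tau-\xi_l))$, which does not vanish on $\HH$, so the inverse matrix has entries bounded by powers of $\cosh d(i,\tau)$. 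Thus $Q_s(f)$ inherits moderate growth, and being invariant of weight $k-2s$ it has a holomorphic expansion at every cusp. This also justifies your depth bound $s\le k/2$.

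For cocompact $\Gam$ your proof is complete as it stands. In the cusped case, which is where Lemma~\ref{lemfreeness} invokes this proposition, the paper's own proof does not address the cusp condition at all.
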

%% 
%%%%%%%%%%%%%%%%%%%%%%%%%%%%%%%%%%%%%%%%%%%%%%%%%%%%%%%%%%%%%%%%%%%%%%%%%%%%%%%%%%
%% 
\begin{proof}
Injectivity of the graded pieces (the embedding direction) is the elementary half of the classical theory of nearly holomorphic forms  
 \cite{ShimuraNH}, see also \cite{Zagier}, already cited as \cite[ref.25]{BCFMN}. 
 It only uses that $Q_s$ is well defined and additive with no surjectivity input.
 The equality statement for groups with a cusp is given by Proposition \ref{propE2exists}, and the failure of equality at $k=2$ for cocompact groups is given by Theorem \ref{thmatiyah}. 
\end{proof}
%%
%%%%%%%%%%%%%%%%%%%%%%%%%%%%%%%%%%%%%%%%%%%%%%%%%%%%%%%%%%%%%%%%%%%%%%%%%%%%%%%%%%%%
%% 
\begin{remark}
[the real-analytic substitute]
\label{rmkmaassshimura}
Although a literal holomorphic generator $E_2^\Gam$ does not exist for cocompact $\Gam$, the Maass-Shimura raising operator 
$$\delta_kf:=\frac1{2\pi i}\frac{\partial f}{\partial\tau}+\frac{k}{4\pi\Im\tau}f,$$ 
is well defined for every $\Gam$, with no existence hypothesis whatsoever. 
It depends only on the hyperbolic metric coefficient $\Im\tau$, via the elementary   identity $\Im(\gamma\tau)=\Im(\tau)/|j(\gamma,\tau)|^2$, $\gamma\in\PSL_2(\RR)$. 
It sends a weight-$k$  holomorphic automorphic form $f$ to a real-analytic, exactly weight-$(k+2)$-automorphic, nearly holomorphic form $\delta_kf=f_1(\tau)+f_0(\tau)/(4\pi\Im\tau)$  wher $f_0$, $f_1$ are holomorphic.   
Geometrically, $\delta_k$ is the Chern connection of the always-existing, smooth hyperbolic Hermitian metric on $\omega^{\otimes k/2}$. 
 It is automatically compatible with the holomorphic structure in the $\bar\partial$-direction, and Atiyah's obstruction, which only concerns connections that are holomorphic in both directions, is simply invisible to it. 
 We use $\delta_k$ in \S\ref{seccocompact} to obtain partial, unobstructed results in the cocompact case.
\end{remark}
%%
%%%%%%%%%%%%%%%%%%%%%%%%%%%%%%%%%%%%%%%%%%%%%%%%%%%%%%%%%%%%%%%%%%%%%%%%%%%%%%%%%
%% 
\section{The vertex operator algebra bundle on $\Gam\backslash\HH$}\label{secbundle}
We now recall the construction of \cite[\S3]{BCFMN} and observe that it goes through unchanged for an arbitrary Fuchsian group. 

Let $\Aut\mathcal O$ be the group of formal coordinate changes, $\HH\times\Aut\mathcal O\to\HH$ the torsor of local coordinates \cite[\S3.1.1]{BCFMN}, and for a QVOA $V$ let $R(f)$ denote the action of $f\in\Aut\mathcal O$ on $V$ via the Virasoro operators $L(n)$, $n\ge 0$ \cite[Appendix B]{BCFMN}. 
 As in \cite[\S3.1.2]{BCFMN}, form the fiber product  
$V_\HH=(\HH\times\Aut\mathcal O)\times_{\Aut\mathcal O}V$ and use Lemma 3.4 of loc. cit. 
(an identity purely about $\Aut\mathcal O$, with no reference to any discrete group) 
to obtain the explicit 
description of the resulting left action of $\GL_2(\RR)^+$ on $\HH\times V$ 
\begin{equation}
\label{eqKaction}
\gamma(\tau,v)=\left(\gamma\tau,\;e^{-\det(\gamma)^{-1}cj(\gamma,\tau)L(1)}j(\gamma,\tau)^{-2L(0)}(\det\gamma)^{L(0)}v\right)=(\gamma\tau,K(\gamma,\tau)v),
\end{equation}
where $K(\gamma,\tau)=e^{-\det(\gamma)^{-1}cj(\gamma,\tau)L(1)}j(\gamma,\tau)^{-2L(0)}(\det\gamma)^{L(0)}\in\End(V)$, exactly as in \cite[Def.\ 3.6]{BCFMN}.
%% 
%%%%%%%%%%%%%%%%%%%%%%%%%%%%%%%%%%%%%%%%%%%%%%%%%%%%%%%%%%%%%%%%%%%%%%%%%%%%%%%%%%%%%%%%%
%% 
\begin{theorem}
[generalizing {\cite[Def. 3.3 and Lemma 3.7]{BCFMN}}]
\label{thmbundle}
For every $\gamma$, $\beta\in\GL_2(\RR)^+$ one has the cocycle identity 
$K(\gamma\beta,\tau)=K(\gamma,\beta\tau)K(\beta,\tau)$, thus \eqref{eqKaction}
 defines a left action of $\GL_2(\RR)^+$ on $\HH\times V$.  
In particular, for every Fuchsian group $\Gam\subset\PSL_2(\RR)$ (lifted to $\SL_2(\RR)$, or worked with inside $\GL_2(\RR)^+$ as in \cite[\S1.1]{BCFMN}) and every QVOA $V$, the quotient 
$$\mathcal V_Y:=\Gam\backslash(\HH\times V),$$
is a well-defined (orbifold, in the sense of \S\ref{secfuchsian}, ordinary once $r=0$) vector bundle on $Y=\Gam\backslash\HH$, extending canonically to a bundle  
$\mathcal V_X$ on the compactified curve $X$ exactly as in \cite[\S3.3]{BCFMN}.
\end{theorem}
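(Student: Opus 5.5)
The cocycle identity is the heart of the statement; everything else is formal. I would prove it by factoring $K(\gamma,\tau)$ through the $\mathfrak{sl}_2$-triple $L(-1),L(0),L(1)$. Concretely, I would first check that $\gamma\mapsto K(\gamma,\tau)$ is the action on $V$ of the formal coordinate change $z\mapsto(\gamma(\tau+z)-\gamma\tau)$. This is a Möbius transformation fixing $0$, namely
\[
z\mapsto\frac{\det(\gamma)\,z}{j(\gamma,\tau)\bigl(j(\gamma,\tau)+cz\bigr)}.
\]
Call it $\phi_{\gamma,\tau}\in\Aut\mathcal O$. By construction it satisfies $\phi_{\gamma\beta,\tau}=\phi_{\gamma,\beta\tau}\circ\phi_{\beta,\tau}$, which is just the chain rule for $\gamma\beta$ applied to $\tau+z$.

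Next I would invoke the statement that $R$ is a group (anti)homomorphism $\Aut\mathcal O\to\GL(V)$, i.e.\ Lemma 3.4 of \cite{BCFMN}. That lemma is a purely formal identity about $\Aut\mathcal O$ and the Virasoro relations, with no discrete group involved. To apply it, I would write $\phi_{\gamma,\tau}$ in the normal form $z\mapsto a z/(1-bz)$ with
\[
a=\det(\gamma)\,j(\gamma,\tau)^{-2},\qquad b=-c\,j(\gamma,\tau)^{-1}.
\]
Then $R(\phi_{\gamma,\tau})=e^{-\det(\gamma)^{-1}cj(\gamma,\tau)L(1)}\,a^{L(0)}$, using the standard formula $R(z\mapsto az/(1-bz))=e^{b' L(1)}a^{L(0)}$ for the appropriate $b'$. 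The exact matching of $b'$ with $-\det(\gamma)^{-1}c\,j(\gamma,\tau)$ is where I expect the only real work.

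The step I expect to be hardest is bookkeeping: the order of composition, i.e.\ whether $R$ is a homomorphism or an antihomomorphism given the left action, and the conjugation identity $a^{L(0)}e^{xL(1)}a^{-L(0)}=e^{xa^{-1}L(1)}$. Given the risk of sign errors, I would cross-check by a direct computation. Using that conjugation identity, one computes $K(\gamma,\beta\tau)K(\beta,\tau)$, and two observations close it. First, the $L(0)$-parts multiply correctly because $j(\gamma,\beta\tau)j(\beta,\tau)=j(\gamma\beta,\tau)$. Second, the $L(1)$-exponents add to the required one, by the scalar identity
\[
c_\gamma\, j(\gamma,\beta\tau)\,j(\beta,\tau)^2\det(\beta)^{-1}+c_\beta\,j(\beta,\tau)=c_{\gamma\beta}\,j(\gamma\beta,\tau)\det(\gamma\beta)^{-1}\cdot(\text{determinant factors}).
\]
This identity reduces to the cocycle rule for $X(\gamma,\tau)$, namely $X(\gamma\beta,\tau)=X(\beta,\tau)+\det\beta\, j(\beta,\tau)^{-2}X(\gamma,\beta\tau)$, which follows by differentiating $\log j$. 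This direct route avoids orientation ambiguities, so I would present it as the proof and the $\Aut\mathcal O$ picture as motivation. The cases $\gamma$ or $\beta$ equal to the identity, where $K=\Id$, are immediate.

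Once the cocycle identity holds, \eqref{eqKaction} is a left action, and it is holomorphic in $\tau$. I would then restrict to $\Gam$. There is one subtlety: $-\Id$ acts by $K(-\Id,\tau)=(-1)^{-2L(0)}=\Id$ on each graded piece of integral weight. The action therefore factors through $\PSL_2$, and it does not matter which lift of $\Gam$ is used. Since $\Gam$ acts properly discontinuously on $\HH$ and linearly on fibers, the quotient $\Gam\backslash(\HH\times V)$ is a vector bundle over $Y$. It is ordinary when $\Gam$ is torsion-free and an orbifold bundle at the elliptic points, in the sense of \S\ref{secfuchsian}, where isotropy acts on the fiber by the finite-order operator $K(\gamma_i,\tau_i)$. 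For the extension over cusps, I would conjugate each cusp to $\infty$ with stabilizer $\langle\pm T^h\rangle$. Since $K(T^h,\tau)=\Id$ because $c=0$, the bundle is trivial on a punctured disc in $q_\lambda$, and one glues in the trivial extension exactly as in \cite[\S3.3]{BCFMN}. That argument is local at a single cusp and so applies verbatim to every cusp of $\Gam$.
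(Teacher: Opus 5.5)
Your proposal is essentially the paper's argument. The paper also obtains the cocycle identity from the $\mathfrak{sl}_2$ relations together with the rule $X(\alpha)|_2\beta=X(\alpha\beta)-X(\beta)$ of \cite[Lemma 3.5]{BCFMN}, with the $\Aut\mathcal O$ derivation of \cite[Lemma 3.4]{BCFMN} as the alternative. It then restricts to $\Gam$ and extends over the cusps as in \cite[\S3.3]{BCFMN}; your checks $K(-\Id,\tau)=\Id$ and $K(T^h,\tau)=\Id$ just make those last steps explicit.

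One bookkeeping slip needs fixing. Moving $e^{-\det(\beta)^{-1}c_\beta j(\beta,\tau)L(1)}$ to the left past $\big(j(\gamma,\beta\tau)^{-2}\det\gamma\big)^{L(0)}$ multiplies the $\beta$-exponent, not the $\gamma$-exponent, by $j(\gamma,\beta\tau)^2$. The identity you actually need is $\det(\beta)\,c_\gamma\,j(\gamma,\beta\tau)+c_\beta\,j(\beta,\tau)\,j(\gamma,\beta\tau)^2=c_{\gamma\beta}\,j(\gamma\beta,\tau)$, which after dividing by $j(\gamma\beta,\tau)^2$ is exactly your $X$-cocycle rule. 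Your displayed version, with $j(\beta,\tau)^2$ on the $c_\gamma$-term, is false: it already fails for $\gamma=\left(\begin{smallmatrix}1&0\\1&1\end{smallmatrix}\right)$, $\beta=\left(\begin{smallmatrix}1&0\\2&1\end{smallmatrix}\right)$.
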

%% 
%%%%%%%%%%%%%%%%%%%%%%%%%%%%%%%%%%%%%%%%%%%%%%%%%%%%%%%%%%%%%%%%%%%%%%%%%%%%%%%%%%%%%%%
%% 
\begin{proof}
The proof that \eqref{eqKaction} is a group action, both the derivation via Lemma 3.4 of \cite{BCFMN} and the direct verification in \cite[\S3.1.3]{BCFMN} using Lemma 3.5 of 
 loc. cit., $X(\alpha)|_2\beta=X(\alpha\beta)-X(\beta)$, is an identity among matrices 
$\alpha$, $\beta\in\GL_2(\RR)^+$ and the operators $L(-1)$, $L(0)$, $L(1)$ satisfying the 
$\mathfrak{sl}_2$ relations. Note that at no point is $\Gam(1)$, or any property special to it, used.
 The same is true of Lemma 3.7 of loc. cit., 
which is logically equivalent to the group-action property.   
Restricting the resulting action of $\GL_2(\RR)^+$ on $\HH\times V$ to an arbitrary subgroup $\Gam$ therefore gives a well-defined action. The quotient construction, its extension across the cusps via the square-bracket/$q$-expansion formalism of \cite[\S3.3]{BCFMN} (again an identity in $\Aut\mathcal O$, valid for any cusp of any Fuchsian group) proceed without change.
\end{proof}
%%
%%%%%%%%%%%%%%%%%%%%%%%%%%%%%%%%%%%%%%%%%%%%%%%%%%%%%%%%%%%%%%%%%%%%%%%%%%%%%%%%%%%%%%%%%
%% 
\begin{definition}
[generalizing {\cite[Def. 3.10-3.12]{BCFMN}}]
\label{defVvaluedforms}
A \emph{weakly holomorphic $V$-valued automorphic form of weight $k$} for $\Gam$ of conformal degree $\le N$ is a holomorphic map $f\colon\HH\to F_NV$ with $f\Vert_k\gamma=f$ for all 
$\gamma\in\Gam$ where
$$f\Vert_k\gamma(\tau):=j(\gamma,\tau)^{-k}(\det\gamma)^{k/2}K(\gamma,\tau)^{-1}f(\gamma\tau),$$
exactly as in \cite[\S1.1.3]{BCFMN}. 
We say $f$ is holomorphic if, in addition 

\noindent\quad(a) (cusps) at every cusp of $\Gam$, $f$ has a holomorphic $q_\lambda$-expansion $f\in F_NV[[q_\lambda]]$;

\noindent\quad(b) (elliptic points) at every elliptic point $\tau_i$ with stabilizer generator $\gamma_i$ of order $m_i$, the vector $f(\tau_i)\in F_NV$ is fixed by the finite-order operator $j(\gamma_i,\tau_i)^kK(\gamma_i,\tau_i)$ on $F_NV$.

Let $M_k(V,\Gam)$ denote the resulting space, $M(V,\Gam)=\bigoplus_kM_k(V,\Gam)$ with its conformal filtration $F_NM(V,\Gam)$ exactly as in \cite[\S3.4]{BCFMN}.
\end{definition}
%% 
%%%%%%%%%%%%%%%%%%%%%%%%%%%%%%%%%%%%%%%%%%%%%%%%%%%%%%%%%%%%%%%%%%%%%%%%%%%%%%%%%%%%
%% 
\begin{remark}
[elliptic points: a new phenomenon]
Condition (b) has no counterpart in \cite{BCFMN} because the only nontrivial-looking elliptic points of $\Gam(1)$, i.e., at $\tau=i$ and $\tau=\rho=e^{i\pi/3}$,  
 happen to act trivially on $V$ in the relevant sense, \cite[\S3.3]{BCFMN} remarks that  
$\langle-1,T\rangle$ acts trivially on $V$.
 For a general $\Gam$ with nontrivial elliptic points, $K(\gamma_i,\tau_i)$ involves the nilpotent operator $L(1)$ as well as the semisimple $j(\gamma_i,\tau_i)^{-2L(0)}$, and condition (b) cuts out a linear subspace of $F_NV$ at each elliptic point, strictly refining the classical scalar condition \eqref{eqellipticscalar}. The remainder of this subsection makes the structure of this subspace precise.
\end{remark}
%% 
%%%%%%%%%%%%%%%%%%%%%%%%%%%%%%%%%%%%%%%%%%%%%%%%%%%%%%%%%%%%%%%%%%%%%%%%%%%%%%%%%%%%
%% 
\begin{lemma}
[finite order at elliptic points]
\label{lemellipticfiniteorder}
Let $\gamma_i\in\Gam$ generate the order-$m_i$ stabilizer of the elliptic point $\tau_i$. Then $K(\gamma_i,\tau_i)^{m_i}=\Id_{F_NV}$ for every $N$. Consequently $j(\gamma_i,\tau_i)^kK(\gamma_i,\tau_i)$, restricted to $F_NV$, is an operator of finite order, and hence is diagonalizable over $\CC$.
\end{lemma}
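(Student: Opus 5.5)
The plan is to reduce everything to the cocycle identity of Theorem \ref{thmbundle}, applied iteratively at the fixed point $\tau_i$. Work with a lift $\tilde\gamma_i\in\SL_2(\RR)$ of $\gamma_i$, so that $\det\tilde\gamma_i=1$ and $K(\tilde\gamma_i,\tau)=e^{-cj(\tilde\gamma_i,\tau)L(1)}j(\tilde\gamma_i,\tau)^{-2L(0)}$. Since $\gamma_i$ has order $m_i$ in $\PSL_2(\RR)$, we have $\tilde\gamma_i^{m_i}=\pm\Id$ in $\SL_2(\RR)$.

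Iterating $K(\gamma\beta,\tau)=K(\gamma,\beta\tau)K(\beta,\tau)$ gives
$$K(\tilde\gamma_i^{m},\tau)=K(\tilde\gamma_i,\tilde\gamma_i^{m-1}\tau)\cdots K(\tilde\gamma_i,\tilde\gamma_i\tau)K(\tilde\gamma_i,\tau).$$
At $\tau=\tau_i$ every intermediate point $\tilde\gamma_i^{l}\tau_i$ equals $\tau_i$, so the right-hand side collapses to $K(\tilde\gamma_i,\tau_i)^{m}$. Taking $m=m_i$ yields $K(\gamma_i,\tau_i)^{m_i}=K(\pm\Id,\tau_i)$.

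It remains to evaluate $K(\pm\Id,\tau)$ directly from \eqref{eqKaction}. Here $c=0$, so the exponential factor is $\Id$, and $j=\pm1$. Hence $K(\pm\Id,\tau)=(\pm1)^{-2L(0)}$. This is $\Id$ because $L(0)$ acts on $V$ with integer eigenvalues (the $\ZZ$-grading of a QVOA), so $(\pm1)^{-2n}=1$ on each $V_n$. The same collapsing argument for the scalar cocycle $j(\gamma\beta,\tau)=j(\gamma,\beta\tau)j(\beta,\tau)$ gives $j(\tilde\gamma_i,\tau_i)^{m_i}=j(\pm\Id,\tau_i)=\pm1$. Thus $j(\gamma_i,\tau_i)^{k}$ has order dividing $2m_i$, and $j(\gamma_i,\tau_i)^kK(\gamma_i,\tau_i)$ has order dividing $2m_i$. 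For even $k$ the order in fact divides $m_i$; this also shows that the choice of lift $\pm\tilde\gamma_i$ is immaterial for even $k$.

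For the restriction to $F_NV$, I check that $K(\gamma_i,\tau_i)$ preserves $F_NV$. Indeed $L(0)$ preserves each $V_n$, and $L(1)$ maps $V_n$ to $V_{n-1}$, so the exponential is a finite sum on $F_NV$ that stays inside $F_NV$. Moreover $F_NV$ is finite-dimensional, since it is a finite sum of the finite-dimensional graded pieces of $V$. So we have an operator $T$ on a finite-dimensional complex space with $T^M=\Id$, $M=2m_i$. Its minimal polynomial divides $x^M-1$, which has distinct roots over $\CC$, so $T$ is diagonalizable.

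The step needing the most care is the sign bookkeeping: $\tilde\gamma_i^{m_i}$ may be $-\Id$ rather than $\Id$. The argument relies on the integrality of the $L(0)$-spectrum to make $(-1)^{-2L(0)}=\Id$. If $V$ were only graded by a coset of $\ZZ$, this step would need a separate justification.
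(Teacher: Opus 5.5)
Your proposal is correct and follows the paper's proof essentially step for step. You collapse the cocycle identity at the fixed point to get $K(\gamma_i,\tau_i)^{m_i}=K(\pm\Id,\tau_i)=\Id$, observe that $j(\gamma_i,\tau_i)$ is a root of unity of order dividing $2m_i$, and conclude diagonalizability from the separable polynomial $x^{2m_i}-1$. Two of your additions make explicit what the paper compresses into ``$\epsilon^{-2}=1$'': the appeal to integrality of the $L(0)$-spectrum to justify $(\pm1)^{-2L(0)}=\Id$, and the check that $K(\gamma_i,\tau_i)$ preserves the finite-dimensional space $F_NV$.
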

%% 
%%%%%%%%%%%%%%%%%%%%%%%%%%%%%%%%%%%%%%%%%%%%%%%%%%%%%%%%%%%%%%%%%%%%%%%%%%%%%%%%%%%%%
%% 
\begin{proof}
Since $\gamma_i\tau_i=\tau_i$, the cocycle identity $K(\alpha\beta,\tau)=K(\alpha,\beta\tau)K(\beta,\tau)$ of Theorem \ref{thmbundle} gives, by induction on $r$, $K(\gamma_i^r,\tau_i)=K(\gamma_i,\tau_i)^r$ for every $r\ge 1$.   
Lifting $\gamma_i$ to $\GL_2(\RR)^+$, the hypothesis that $\gamma_i$ has order $m_i$ in 
$\PSL_2(\RR)$ means $\gamma_i^{m_i}=\epsilon\Id$ for $\epsilon=\pm1$.  
 Directly from the formula for $K$ in Definition \ref{defVvaluedforms},  
$K(\epsilon\Id,\tau)=e^0\cdot\epsilon^{-2L(0)}\cdot1^{L(0)}=\Id_V$ for either sign of 
$\epsilon$, as $\epsilon^{-2}=1$,  
thus $K(\gamma_i,\tau_i)^{m_i}=K(\gamma_i^{m_i},\tau_i)=\Id_V$ on every $F_NV$.   
Since $j(\gamma_i,\tau_i)$ is itself a root of unity of order dividing $2m_i$, the classical multiplier of $\gamma_i$ at its fixed point cf. \eqref{eqellipticscalar} and \cite{Shimura, Miyake}, the operator $j(\gamma_i,\tau_i)^kK(\gamma_i,\tau_i)$ also satisfies a separable polynomial $x^{2m_i}-1=0$ over $\CC$, and a finite-dimensional operator annihilated by a separable polynomial is diagonalizable.
\end{proof}
%% 
%%%%%%%%%%%%%%%%%%%%%%%%%%%%%%%%%%%%%%%%%%%%%%%%%%%%%%%%%%%%%%%%%%%%%%%%%%%%%%%%%%%%%%%%%%
%% 
\begin{remark}[why this does not, by itself, force $f(\tau_i)\in\ker L(1)$]
\label{rmknotjordan}
It is tempting, but, as we now explain, not generally correct, to argue as follows. 
 $K(\gamma_i,\tau_i)=e^{-cj(\gamma_i,\tau_i)L(1)}\cdot j(\gamma_i,\tau_i)^{-2L(0)}(\det\gamma_i)^{L(0)}$ exhibits $K$ as a product of a unipotent factor $U=e^{-cjL(1)}$  
as $L(1)$ is nilpotent on each $F_NV$, and a semisimple factor $S=j^{-2L(0)}(\det)^{(0)}$.
Since $K$ has finite order according to Lemma \ref{lemellipticfiniteorder}, one might expect this forces $U=\Id$, i.e., $f(\tau_i)\in\ker L(1)$.  
 This reasoning would be correct if $U$ and $S$ commuted, making $US$ the multiplicative Jordan-Chevalley decomposition of $K$ into commuting semisimple and unipotent parts, for which finite order of the product does force the unipotent factor to be trivial. 
 But $L(0)$ and $L(1)$ do not commute, i.e., $[L(0),L(1)]=-L(1)$), 
thus $U$ and $S$ do not commute, $US$ is not the Jordan-Chevalley decomposition of $K$, and finite order of $K$ does not, by itself, force $U=\Id$. Proposition \ref{propellipticstructure} below gives the correct structural statement. 
\end{remark}
%% 
%%%%%%%%%%%%%%%%%%%%%%%%%%%%%%%%%%%%%%%%%%%%%%%%%%%%%%%%%%%%%%%%%%%%%%%%%%%%%%%%%%%%%%%%%%%
%% 
\begin{proposition}
[the elliptic-point eigenspace, generic case]
\label{propellipticstructure}
With notation as in Lemma \ref{lemellipticfiniteorder}, fix $N$ and set 
 $\lambda_n:=j(\gamma_i,\tau_i)^{k-2n}$ for $0\le n\le N$, the eigenvalue by which 
$j(\gamma_i,\tau_i)^kK(\gamma_i,\tau_i)$ acts on the associated graded piece 
$F_nV/F_{n-1}V\cong V_n$.    
Suppose $\lambda_{n_0}=1$ for a unique $n_0\in\{0,\dots,N\}$ with $V_{n_0}\ne 0$,  
and $\lambda_n\ne 1$ for every other such $n$. 
 Then the fixed subspace $\{v\in F_NV:j(\gamma_i,\tau_i)^kK(\gamma_i,\tau_i)v=v\}$ has dimension $\dim V_{n_0}$, and projection onto the $V_{n_0}$-component is an isomorphism from this fixed subspace onto $V_{n_0}$.
 Every $v_0\in V_{n_0}$ extends uniquely to a fixed vector $v_0+v_0'$, $v_0'\in F_{n_0-1}V$,  with no further constraint on $v_0$ beyond the resonance condition $\lambda_{n_0}=1$ already assumed.  
If no $n\le N$ with $V_n\ne0$ has $\lambda_n=1$, the fixed subspace is $0$. 
\end{proposition}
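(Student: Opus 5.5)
Set $A:=j(\gamma_i,\tau_i)^kK(\gamma_i,\tau_i)$ on $F_NV$. The plan is to use the triangularity of $A$ with respect to the conformal filtration, together with diagonalizability from Lemma \ref{lemellipticfiniteorder}. The factor $j^{-2L(0)}(\det\gamma_i)^{L(0)}$ acts on $V_n$ by the scalar $j^{-2n}$. The factor $e^{-cjL(1)}$ is unipotent and maps $V_n$ into $V_n\oplus F_{n-1}V$, since $L(1)$ lowers conformal degree by one. Hence $A$ preserves each $F_nV$, and on $F_nV/F_{n-1}V\cong V_n$ it acts by the scalar $\lambda_n=j^{k-2n}$. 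This justifies the eigenvalue description in the statement. In a basis adapted to $F_NV=\bigoplus_{n\le N}V_n$, $A$ is block upper triangular with diagonal blocks $\lambda_n\Id_{V_n}$. So the characteristic polynomial of $A$ is $\prod_n(x-\lambda_n)^{\dim V_n}$. The algebraic multiplicity of the eigenvalue $1$ is therefore $\sum_{\lambda_n=1}\dim V_n$, which equals $\dim V_{n_0}$ under the hypothesis, and equals $0$ in the non-resonant case.

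Next, Lemma \ref{lemellipticfiniteorder} says $A$ has finite order, hence is diagonalizable. So the geometric multiplicity of $1$ equals its algebraic multiplicity. This gives $\dim\ker(A-\Id)=\dim V_{n_0}$, and gives $\ker(A-\Id)=0$ when no $\lambda_n$ equals $1$, which is the last assertion.

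For the projection statement, fixed vectors are handled by working in $F_{n_0}V$.
\begin{itemize}
\item \emph{Fixed vectors lie in $F_{n_0}V$.} Take a fixed $v$ and let $m$ be the largest degree with nonzero component $v_m$. Comparing top components in $Av=v$ gives $\lambda_mv_m=v_m$, so $\lambda_m=1$ and $m=n_0$. Thus every fixed vector lies in $F_{n_0}V$ and has the form $v_0+v_0'$ with $v_0\in V_{n_0}$ and $v_0'\in F_{n_0-1}V$.
\item \emph{Injectivity of projection.} If $v_0=0$, then $v\in F_{n_0-1}V$. On that subspace $1$ is not an eigenvalue, since all $\lambda_n\ne1$ for $n<n_0$. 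Hence $v=0$.
\item \emph{Surjectivity.} The fixed space has dimension $\dim V_{n_0}$ and injects into $V_{n_0}$, so the projection is an isomorphism.
\end{itemize}

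Uniqueness of the extension $v_0'$ follows from injectivity. One can also construct it directly. Write $Av_0=v_0+w$ with $w\in F_{n_0-1}V$. Then solve $(A-\Id)v_0'=-w$ on $F_{n_0-1}V$, where $A-\Id$ is invertible.

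The one delicate point is the step that $A$ is triangular with the stated diagonal scalars. This must be checked against the exact ordering of factors in $K$, including the $\det\gamma_i$ factor and the sign conventions for $j$. I would verify it by writing $e^{-cjL(1)}=\sum_r(-cj)^rL(1)^r/r!$ and tracking degrees.
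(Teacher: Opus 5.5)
Your proof is correct and uses the same mechanism as the paper's. $A$ is triangular for the conformal filtration with diagonal scalars $\lambda_n$, and $\lambda_n-1$ is invertible for $n\ne n_0$, so your direct solve $(A-\Id)v_0'=-w$ on $F_{n_0-1}V$ is a coordinate-free form of the paper's degree-by-degree downward induction, and your multiplicity count makes explicit the paper's appeal to diagonalizability. Your top-component step, showing every fixed vector already lies in $F_{n_0}V$, is tacitly assumed in the paper's proof (it simply writes a fixed vector as $v_0+v_0'$); combined with the direct solve, it shows that the diagonalizability from Lemma \ref{lemellipticfiniteorder} is not actually needed for this proposition.
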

%% 
%%%%%%%%%%%%%%%%%%%%%%%%%%%%%%%%%%%%%%%%%%%%%%%%%%%%%%%%%%%%%%%%%%%%%%%%%%%%%%%%%%%%%%%%
%% 
\begin{proof}
By Lemma \ref{lemellipticfiniteorder}, $\hat K:=j(\gamma_i,\tau_i)^kK(\gamma_i,\tau_i)$ is diagonalizable on $F_NV$. Since $L(1)$ strictly lowers conformal degree, $\hat K$ preserves the filtration $F_0V\subset\cdots\subset F_NV$ and induces the scalar $\lambda_n$ on $F_nV/F_{n-1}V$. 
 Write an expected fixed vector as $v=v_0+v_0'$, $v_0\in V_{n_0}$, $v_0'\in F_{n_0-1}V$, and expand the equation $\hat Kv=v$ degree by degree from the top. 
Then the degree-$n_0$ equation is $\lambda_{n_0}v_0=v_0$, automatic by hypothesis.  
For each $n<n_0$, the degree-$n$ equation has the form 
$\lambda_nv_n'+L_n(v_0,\dots)=v_n'$ for a linear expression $L_n$ built from $v_0$ via iterated application of $L(1)$ and the, already-determined by downward induction, higher-degree components of $v_0'$. Hence it is uniquely solvable for the degree-$n$ component $v_n'$ 
because $\lambda_n\ne 1$ makes $\lambda_n-1$ invertible.  
 This produces, for every $v_0\in V_{n_0}$, a unique completion to a fixed vector, with no constraint imposed on $v_0$ itself, proving the claim.
 The dimension count and the final assertion (no resonant degree at all) follow immediately, again using diagonalizability to rule out any further generalized-eigenvector contribution.
\end{proof}
%%
%%%%%%%%%%%%%%%%%%%%%%%%%%%%%%%%%%%%%%%%%%%%%%%%%%%%%%%%%%%%%%%%%%%%%%%%%%%%%%%%%%%%%5
%% 
\begin{remark}
\label{rmkellipticgeneric}
Proposition \ref{propellipticstructure} shows that condition (b) does not, in general, force $f(\tau_i)$ into $\ker L(1)$. In the generic case of a unique resonant degree $n_0$ within $F_NV$, the leading term $v_0\in V_{n_0}$ is unconstrained beyond the resonance condition itself, and is completed canonically by lower-degree corrections built from 
$L(1)v_0$, $L(1)^2v_0$, $\dots$, exactly the unobstructed completion already familiar from constant sections, cf.\cite[Example 5.1]{BCFMN}.
 There is, however, a basic and important special case: when $f$ is itself a constant section valued in a single graded piece $V_n$ (as in \cite[Example 5.1]{BCFMN} and our own constructions throughout, e.g., Proposition \ref{proptopexact} below), necessarily $N=n_0=n$, thus there is no room below $n_0$ for any completion $v_0'\in F_{n_0-1}V$ at all. 
 In that case the resonance condition $\lambda_{n_0}=1$ becomes a constraint on $f$ itself, with no lower-degree freedom to absorb its failure, and quasi-primarity is not only sufficient but necessary. We mention this precisely in Example 
\ref{exelliptic}. The delicate case, i.e., multiple resonant degrees within a single $F_NV$, is not needed below. 
Diagonalizability of $\hat K$, Lemma \ref{lemellipticfiniteorder}, still applies there, but forces compatibility conditions linking the $L(1)$-orbits of the resonant pieces that
 we do not consider in this paper.

It is needed to state explicitly, at every later point, whether a generic-resonance hypothesis is in force. Definition \ref{defVvaluedforms}(b) itself requires no such hypothesis and is not restricted by it. Namely, 
 $f(\tau_i)$ is fixed by $j(\gamma_i,\tau_i)^kK(\gamma_i,\tau_i)$ is simply the condition that a vector lie in the fixed subspace of a specific linear operator, which is well defined on every $F_NV$ regardless of how many resonant degrees $\hat K$ has.
 Proposition \ref{propellipticstructure} is not a hypothesis on $f$ but an attempt to 
 describe this fixed subspace explicitly, and it succeeds only in the generic case. 
 Consequently, Theorem \ref{thmbundle}, Definition \ref{defVvaluedforms}, and Theorem \ref{thmconnection} use no property of the fixed subspace beyond its being well defined, hence require no resonance hypothesis of any kind.  
 Proposition \ref{proptopexact}, and, via it, Theorem \ref{thmexactdim} and Proposition \ref{proppartialconj}, also require none.
 Their proofs work directly with constant sections $f\equiv v\in V_n$, the extreme case $N=n_0=n$ of Proposition \ref{propellipticstructure} in which there is only one graded piece and hence only one candidate resonant degree (cf. Example \ref{exelliptic}). 
Thus the generic case is automatic there and Proposition \ref{propellipticstructure} is not even  used. 
 The only place in this paper where the generic-resonance hypothesis is used  
is the general-signature case of Conjecture \ref{conjexactdim} beyond its top-degree term, exactly as mentioned in Remark \ref{rmktowardsconj}. A complete proof of Conjecture \ref{conjexactdim} would need to consider the multiple-resonance case directly, which we have not done.
\end{remark}
%%
%%%%%%%%%%%%%%%%%%%%%%%%%%%%%%%%%%%%%%%%%%%%%%%%%%%%%%%%%%%%%%%%%%%%%%%%%%%%%%%%5
%% 
\section{The connection on $\mathcal V_X$}\label{secconnection}
\begin{theorem}
[generalizing {\cite[\S4]{BCFMN}}]
\label{thmconnection}
$$\nabla:=d+L(-1)\otimes d\tau,$$
defines a connection $\nabla\colon\mathcal V_X\to\mathcal V_X\otimes\Omega^1_X$ on the bundle $\mathcal V_X$ for every Fuchsian group $\Gam$.
\end{theorem}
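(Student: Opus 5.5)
The plan is to reduce the statement to a single operator identity for the cocycle $K$ of \eqref{eqKaction}. Then, exactly as for Theorem \ref{thmbundle}, I would observe that this identity involves only $\GL_2(\RR)^+$ and the $\mathfrak{sl}_2$-triple $L(-1),L(0),L(1)$, never $\Gam$ itself.

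On the trivial bundle $\HH\times V$, the operator $\nabla=d+L(-1)\otimes d\tau$ is clearly a connection. It descends to $\mathcal V_Y=\Gam\backslash(\HH\times V)$ if and only if it commutes with the action of every $\gamma\in\Gam$. Unwinding the action $(\tau,v)\mapsto(\gamma\tau,K(\gamma,\tau)v)$, and using $d(\gamma\tau)=(\det\gamma)\,j(\gamma,\tau)^{-2}\,d\tau$, equivariance is equivalent to
\begin{equation*}
\frac{\partial}{\partial\tau}K(\gamma,\tau)=(\det\gamma)\,j(\gamma,\tau)^{-2}\,L(-1)\,K(\gamma,\tau)-K(\gamma,\tau)\,L(-1),\qquad\gamma\in\GL_2(\RR)^+,
\end{equation*}
holding as an identity of operators on each $F_NV$. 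I will fix the signs by comparing with \cite[\S4]{BCFMN}. Write $\Phi(\gamma,\tau)$ for the difference of the two sides.

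Next, I would show that the set of $\gamma$ with $\Phi(\gamma,\cdot)\equiv0$ is closed under products. The tool is the cocycle identity $K(\gamma\beta,\tau)=K(\gamma,\beta\tau)K(\beta,\tau)$ of Theorem \ref{thmbundle}. Differentiate it in $\tau$, apply the chain rule with $\partial_\tau(\beta\tau)=(\det\beta)j(\beta,\tau)^{-2}$, and use $j(\gamma\beta,\tau)=j(\gamma,\beta\tau)j(\beta,\tau)$. The identity for $\gamma$ and for $\beta$ then gives it for $\gamma\beta$, because the middle terms $\pm K(\gamma,\beta\tau)L(-1)K(\beta,\tau)$ cancel. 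Closure under inverses follows similarly from $K(\gamma^{-1},\gamma\tau)=K(\gamma,\tau)^{-1}$.

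It then suffices to check the identity on generators of $\GL_2(\RR)^+$: the translations $\tau\mapsto\tau+b$, the diagonal matrices, and $S=\left(\begin{smallmatrix}0&-1\\1&0\end{smallmatrix}\right)$.
\begin{itemize}
\item For translations, $K=\Id$ and $j=1$, so the identity is trivial.
\item For diagonal matrices, $K$ is a constant power of $L(0)$-exponentials. The identity reduces to $a^{L(0)}L(-1)a^{-L(0)}=a\,L(-1)$, which is the relation $[L(0),L(-1)]=L(-1)$.
\item For $S$, we have $K(S,\tau)=e^{-\tau L(1)}\tau^{-2L(0)}$, and the check is a genuine computation in $\mathfrak{sl}_2$. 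I would use $e^{xL(1)}L(-1)e^{-xL(1)}=L(-1)+2xL(0)+x^2L(1)$ (signs to be fixed from $[L(1),L(-1)]=2L(0)$ and $[L(0),L(1)]=-L(1)$), together with $\tau^{-2L(0)}L(-1)\tau^{2L(0)}=\tau^{-2}L(-1)$. The terms in $L(0)$ and $L(1)$ produced by differentiating $\tau^{-2L(0)}$ and $e^{-\tau L(1)}$ must cancel against those from the conjugation.
\end{itemize}
This gives the identity on all of $\GL_2(\RR)^+$, hence on every $\Gam$, so $\nabla$ descends to $\mathcal V_Y$.

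It remains to handle the points of $X\smallsetminus Y$ and the orbifold points.

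\emph{Elliptic points.} In the automorphy-factor convention of \S\ref{secfuchsian}, descent at $\tau_i$ is just equivariance under the stabilizer $\langle\gamma_i\rangle$, which is already included. Moreover, $\nabla$ preserves the fixed-vector condition (b) of Definition \ref{defVvaluedforms} for the appropriate shifted weight, since it commutes with $K(\gamma_i,\tau_i)$ up to the scalar $j(\gamma_i,\tau_i)^{-2}$. No resonance hypothesis enters.

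\emph{Cusps.} I expect the extension across the cusps to be the main obstacle. At a cusp of width $\lambda$, conjugated to $\infty$, I would pass to the square-bracket frame of \cite[\S3.3]{BCFMN}, in which $\mathcal V_X$ is extended. There $d\tau=\tfrac{\lambda}{2\pi i}\,dq_\lambda/q_\lambda$, and I would check that $\nabla$ has at worst the logarithmic behaviour already present in \cite[\S4]{BCFMN}. I would first try to transport their computation verbatim after the rescaling $\tau\mapsto\tau/\lambda$, which conjugates $\Gam_\infty$ to the level-one stabilizer $\langle\pm T\rangle$. The only point to verify is that this rescaling is diagonal, so that by the diagonal case above it merely rescales $L(-1)$ compatibly with the square-bracket coordinates. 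I would then interpret $\Omega^1_X$ at cusps in the same (logarithmic) sense as \cite{BCFMN}.
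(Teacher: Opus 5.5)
Your proposal is correct, but it takes a different route from the paper.

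\textbf{The paper's route.} The paper does not reduce to generators. It quotes the computation of \cite[\S4.1]{BCFMN}, which checks the transformation law of $\nabla_kf=\nabla f-\tfrac{2\pi ik}{12}E_2f$ directly for an arbitrary $\gamma\in\GL_2(\RR)^+$ by reducing it to one $\mathfrak{sl}_2$ identity (their eq.~(9)). It then notes that this identity never uses $\Gam(1)$, and sets $k=0$ so that the $E_2$-term, the only group-dependent ingredient, disappears.

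\textbf{Your route.} You encode equivariance as an operator identity for $K$. You use the cocycle identity to show that its solution set is closed under products and inverses; the cancellation of the terms $K(\gamma,\beta\tau)L(-1)K(\beta,\tau)$ is right. You then check translations, diagonal matrices and $S$, so only one genuine $\mathfrak{sl}_2$ computation remains.

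\textbf{Comparison.} Your version is self-contained and makes it transparent that $E_2$ plays no role at all. The paper's is shorter but rests entirely on the citation. You also treat the cusps (allowing the logarithmic behaviour coming from $d\tau=\tfrac{\lambda}{2\pi i}\,dq_\lambda/q_\lambda$) and the elliptic points. The paper's proof passes over both by simply asserting descent.

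\textbf{A sign correction.} Your displayed identity has the wrong overall sign. Equivariance of $d+L(-1)\,d\tau$ means
\[
\partial_\tau K(\gamma,\tau)=K(\gamma,\tau)L(-1)-(\det\gamma)\,j(\gamma,\tau)^{-2}L(-1)K(\gamma,\tau).
\]
With your sign the check at $S$ would fail, since $\partial_\tau K(S,\tau)\ne0$. With the sign above, use $e^{xL(1)}L(-1)e^{-xL(1)}=L(-1)+2xL(0)+x^2L(1)$ and $\tau^{-2L(0)}L(-1)=\tau^{-2}L(-1)\tau^{-2L(0)}$. Then both sides equal $e^{-\tau L(1)}\bigl(-L(1)-2\tau^{-1}L(0)\bigr)\tau^{-2L(0)}$, as you anticipated.

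\textbf{The elliptic points.} Condition (b) for $\nabla s$ comes from evaluating the weight-$2$ transformation law of $\nabla s$ at $\tau=\tau_i$. It does not come from $L(-1)$ commuting with $K(\gamma_i,\tau_i)$ up to a scalar. In fact $L(-1)$ does not do so, because of the factor $e^{-cjL(1)}$ with $c\neq0$.
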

%% 
%%%%%%%%%%%%%%%%%%%%%%%%%%%%%%%%%%%%%%%%%%%%%%%%%%%%%%%%%%%%%%%%%%%%%%%%%%%%%%%%%%%%%%%%
%% 
\begin{proof}
The proof of \cite[Thm. in \S4.1]{BCFMN} establishes, for $f$ a $V$-valued holomorphic function satisfying 
 $f(\gamma\tau)=e^{-cj(\gamma,\tau)L(1)}j(\gamma,\tau)^{k-2L(0)}f(\tau)$ for an arbitrary 
 $\gamma=\left(\begin{smallmatrix}a&b\\c&d\end{smallmatrix}\right)\in\GL_2(\RR)^+$, the identity
\begin{equation}
\label{eqnablak}
(\nabla_kf)(\gamma\tau)=e^{-cj(\gamma,\tau)L(1)}j(\gamma,\tau)^{k+2-2L(0)}(\nabla_kf)(\tau),\qquad \nabla_kf:=\nabla f-\frac{2\pi ik}{12}E_2f,
\end{equation}
where the proof reduces, after substituting the transformation law of $E_2$ (their eq. (7)) to cancel the homogeneous-weight part of the anomaly, to the purely Lie-algebraic identity (their eq.\ (9))
$$L(-1)f_N=j(\gamma,\tau)^{2L(0)-2N-2}e^{cj(\gamma,\tau)L(1)}L(-1)e^{-cj(\gamma,\tau)L(1)}f_N-c^2j(\gamma,\tau)^{-2}L(1)f_N-2Ncj(\gamma,\tau)^{-1}f_N,$$  
$f_N\in V_N$, verified there using only the Virasoro commutation relations $[L(1),L(-1)]=2L(0)$ 
 and $[L(m),L(n)]=(m-n)L(m+n)$. 
 This identity involves only $L(-1)$, $L(0)$, $L(1)$, the scalars $c$, $j(\gamma,\tau)$, and the grading $N$. 
It makes no reference to $\Gam(1)$ and holds for every $\gamma\in\GL_2(\RR)^+$, 
 hence for every $\gamma$ in every Fuchsian group $\Gam$.
Specializing \eqref{eqnablak} to $k=0$ (so that the term $\tfrac{2\pi ik}{12}E_2f$ vanishes identically, and the existence question for $E_2^\Gam$ is irrelevant) gives precisely the statement that $\nabla=\nabla_0=d+L(-1)\otimes d\tau$ transforms $V$-valued functions on $\HH$ into $(\mathcal V_X\otimes\Omega^1_X)$-valued ones in the manner required for a connection.  
Since the computation is valid for every $\gamma\in\Gam$, $\nabla$ descends to the connection asserted.
\end{proof}
%% 
%%%%%%%%%%%%%%%%%%%%%%%%%%%%%%%%%%%%%%%%%%%%%%%%%%%%%%%%%%%%%%%%%%%%%%%%%%%%%%%%%%%%%%
%% 
For weight $k\ne0$, when $\Gam$ has at least one cusp one may take the modular derivative 
$$\nabla_kf:=\nabla f-\frac{2\pi ik}{12}E_2^\Gam f,$$ 
with the generator $E_2^\Gam$ of Proposition \ref{propE2exists}. 
 Equation \eqref{eqnablak}, with $E_2$ replaced by $E_2^\Gam$ throughout using only that \eqref{eqE2anomaly} holds, then  shows that $\nabla_k\colon M_k(V,\Gam)\to M_{k+2}(V,\Gam)$, exactly as in \cite[\S4]{BCFMN}. 
 When $\Gam$ is cocompact of genus $g\ge 2$, Theorem \ref{thmatiyah} shows that no correction term built algebraically from $Q(\Gam)$ can make this modular derivative preserve automorphy for $k\ne 0$.
 The connection $\nabla$ of Theorem \ref{thmconnection} remains available and unobstructed. 
 But the refined, weight-preserving modular derivative $\nabla_k$ is a phenomenon special to groups with a cusp. 
%%
%%%%%%%%%%%%%%%%%%%%%%%%%%%%%%%%%%%%%%%%%%%%%%%%%%%%%%%%%%%%%%%%%%%%%%%%%%%%%%%%%%%%%%%
%% 
\section{Quasi-automorphic $V$-valued forms and the doubled QVOA $Q(V,\Gam)$}
\label{secquasi} 
\begin{definition}
[generalizing {\cite[Def.\ 5.3]{BCFMN}}] 
\label{defVvalQuasi}
A \emph{$V$-valued meromorphic quasi-automorphic form of weight $k$ and depth $\le s$ for  
$\Gam$} is a holomorphic map $f\colon\HH\to F_NV$ for some $N$ such that
$$f\Vert_k\gamma(\tau)=\sum_{m=0}^sX(\gamma,\tau)^mQ_m(f)(\tau),$$ 
for holomorphic $Q_m(f)\colon\HH\to F_NV$, with $f$ meromorphic at the cusps. 
 Let $Q_k(V,\Gam)$ be the resulting space and $Q(V,\Gam)=\bigoplus_{k\ge 0}Q_k(V,\Gam)$.
\end{definition}
%%
%%%%%%%%%%%%%%%%%%%%%%%%%%%%%%%%%%%%%%%%%%%%%%%%%%%%%%%%%%%%%%%%%%%%%%%%%%%%%%%%%%%%
%% 
\begin{theorem}
[generalizing {\cite[Thm.\ 5.5]{BCFMN}}]
\label{thmQiso}
There is a canonical isomorphism of graded $Q(\Gam)$-modules 
$$\iota\colon Q(\Gam)\otimes V^{(2)}\xrightarrow{\sim}Q(V,\Gam),
\qquad f(\tau)\otimes v\mapsto f(\tau)v.$$
\end{theorem}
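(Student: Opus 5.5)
The plan is to follow the proof of \cite[Thm.\ 5.5]{BCFMN} and to check at each step that only the explicit formula for $K(\gamma,\tau)$ is used, never any property of $\Gam(1)$ or of $E_2$. Recall that $V^{(2)}$ is $V$ with $V_n$ placed in weight $2n$. We first show that $\iota$ is well defined and weight-preserving, then that it is injective, and finally that it is surjective by induction on conformal degree.

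\emph{Step 1: constant sections are quasi-automorphic.} Fix $v\in V_n$, viewed as a constant function $\HH\to F_nV$. From \eqref{eqKaction} we have $K(\gamma,\tau)^{-1}=j^{2L(0)}e^{cjL(1)}$ for $\gamma\in\Gam$ (with $\det\gamma=1$). The vector $L(1)^mv$ lies in $V_{n-m}$, so
$$v\Vert_{2n}\gamma=j^{-2n}j^{2L(0)}\sum_{m\ge0}\frac{(cj)^m}{m!}L(1)^mv=\sum_{m=0}^{n}\frac{(cj)^mj^{2(n-m)}}{j^{2n}}\frac{L(1)^mv}{m!}=\sum_{m=0}^nX(\gamma,\tau)^m\frac{L(1)^mv}{m!}.$$
Hence $v\in Q_{2n}(V,\Gam)$, of depth $\le n$, with $Q_m(v)=L(1)^mv/m!$. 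For scalar $f\in Q_k(\Gam)$ we have $(fv)\Vert_{k+2n}\gamma=(f|_k\gamma)(v\Vert_{2n}\gamma)$, since the scalar factor commutes with $K$. Multiplying the two finite $X$-expansions shows $fv\in Q_{k+2n}(V,\Gam)$, and meromorphy at the cusps is inherited from $f$. So $\iota$ is well defined, $Q(\Gam)$-linear, and respects the doubled grading.

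\emph{Step 2: injectivity.} Choose a homogeneous basis $\{v_i\}$ of $V$. An element of $Q(\Gam)\otimes V^{(2)}$ can be written as $\sum_i f_i\otimes v_i$ with finitely many nonzero $f_i$. Its image $\sum_i f_iv_i$ vanishes identically on $\HH$ only if every scalar coefficient $f_i$ vanishes, because the $v_i$ are linearly independent.

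\emph{Step 3: surjectivity, by induction on $N$.} This is the step I expect to need the most care. Take $F\in Q_k(V,\Gam)$ with values in $F_NV$, and let $F_N$ be its $V_N$-component. The operator $K^{-1}=j^{2L(0)}e^{cjL(1)}$ preserves the filtration, because $L(1)$ strictly lowers degree, and it acts on the top graded piece $F_NV/F_{N-1}V$ by $j^{2N}$. Projecting the defining identity $F\Vert_k\gamma=\sum_mX^mQ_m(F)$ onto $V_N$ therefore gives
$$F_N|_{k-2N}\gamma=\sum_m X(\gamma,\tau)^m\,Q_m(F)_N.$$
Expanding $F_N=\sum_i f_iv_i$ in a basis of $V_N$, each coordinate $f_i$ is then a holomorphic scalar quasi-automorphic form of weight $k-2N$, and it is meromorphic at the cusps because coordinates of a $q_\lambda$-expansion are. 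So $f_i\in Q_{k-2N}(\Gam)$. By Step 1, $\iota\big(\sum_if_i\otimes v_i\big)$ lies in $Q_k(V,\Gam)$ and has $V_N$-component $F_N$. Hence $F-\iota\big(\sum_i f_i\otimes v_i\big)\in Q_k(V,\Gam)$ takes values in $F_{N-1}V$, and induction down to $F_{-1}V=0$ finishes the argument.

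The obstacle is simply the bookkeeping in the projection: one must check that the lower-order terms of $e^{cjL(1)}$ never contribute to the top component, and that the projected $Q_m$ remain holomorphic. Both facts follow from the filtration property. Nothing in the argument uses $E_2^\Gam$, cusps, or elliptic points, so the theorem holds for every Fuchsian $\Gam$.
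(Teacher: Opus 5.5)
Your proposal is correct and follows the paper's route. The paper proves this only by reference to \cite[Thm.\ 5.5]{BCFMN}: the module-map property and injectivity are direct computations, and surjectivity is the inductive extraction of the top conformal-degree coefficient, which is exactly your Steps 1--3 written out in more detail (your Step 1 computation is the identity \eqref{eqexamplebcfmn} the paper uses later). One implicit step in Step 3 should be stated: when $2N>k$ the extracted $f_i$ have negative weight and so are not in $Q(\Gam)=\bigoplus_{k\ge0}Q_k(\Gam)$; they must vanish, which holds because the top depth coefficient of a holomorphic quasi-automorphic form of negative weight lies in $M_{<0}(\Gam)=0$, but this uses holomorphy rather than mere meromorphy at the cusps, a point the paper's Definitions \ref{defquasiscalar} and \ref{defVvalQuasi} also leave loose.
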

%% 
%%%%%%%%%%%%%%%%%%%%%%%%%%%%%%%%%%%%%%%%%%%%%%%%%%%%%%%%%%%%%%%%%%%%%%%%%%%%%%%%%%%%%
%% 
\begin{proof}
Identically to the proof of \cite[Thm. 5.5]{BCFMN}, injectivity and the fact that $\iota$ is a graded $Q(\Gam)$-module map are immediate computations using only Definition \ref{defVvalQuasi}  which makes no reference to $\Gam(1)$. 
Surjectivity is proved by the inductive extraction of the top conformal-degree coefficient exactly as in loc. cit., 
an argument depending only on the structure of $\Aut\mathcal O$-modules of finite conformal degree, again with no use of $\Gam(1)$.
\end{proof}
%% 
%%%%%%%%%%%%%%%%%%%%%%%%%%%%%%%%%%%%%%%%%%%%%%%%%%%%%%%%%%%%%%%%%%%%%%%%%%%%%%%%%%%%5
%% 
Next we derive the Ramanujan-type identity for $\theta E_2^\Gam$, explicitly, from the depth-filtration recursion of Lemma \ref{lemthetadepth} below, rather than by a dimension count or by analogy with $\Gam(1)$. 
%% 
%%%%%%%%%%%%%%%%%%%%%%%%%%%%%%%%%%%%%%%%%%%%%%%%%%%%%%%%%%%%%%%%%%%%%%%%%%%%%%%%%%%%%%%55
%% 
\begin{lemma}
[the depth filtration under $\theta$]
\label{lemthetadepth}
Let $g\in Q_k(\Gam)$ have depth $\le p$, thus  
$g|_k\gamma=\sum_{m=0}^pX(\gamma,\tau)^mQ_m(g)(\tau)$. 
Then $\theta g\in Q_{k+2}(\Gam)$ has depth $\le p+1$ with
\begin{equation}
\label{eqthetarecursion}
Q_m(\theta g)=\theta\big(Q_m(g)\big)+\frac{k-m+1}{2\pi i}\,Q_{m-1}(g), \qquad 0\le m\le p+1, 
\end{equation}
using the conventions $Q_{-1}(g):=0$, $Q_{p+1}(g):=0$.
\end{lemma}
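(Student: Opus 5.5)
The plan is to differentiate the transformation law of $g$ directly, with no input beyond Definition \ref{defquasiscalar}. Throughout, $\theta$ is read as $\frac{1}{2\pi i}\frac{d}{d\tau}$; this agrees with $q\,d/dq$ for $q=e^{2\pi i\tau}$ and is in any case the operator whose anomaly $Q_1(\theta f)=\tfrac{k}{2\pi i}f$ was used in Lemma \ref{lemE2connection}. Working with $\gamma\in\Gam$ lifted to $\SL_2(\RR)$, so that $\det\gamma=1$, the hypothesis reads
$$g(\gamma\tau)=j(\gamma,\tau)^k\sum_{m=0}^pX(\gamma,\tau)^mQ_m(g)(\tau).$$

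I will differentiate both sides in $\tau$, using three elementary facts valid for any $\gamma\in\SL_2(\RR)$:
$$\frac{d}{d\tau}(\gamma\tau)=j(\gamma,\tau)^{-2},\qquad \frac{d}{d\tau}j(\gamma,\tau)^k=kc\,j(\gamma,\tau)^{k-1},\qquad \frac{d}{d\tau}X(\gamma,\tau)=-\frac{c^2}{j(\gamma,\tau)^2}=-X(\gamma,\tau)^2 .$$
The left side becomes $g'(\gamma\tau)\,j^{-2}$. The right side becomes
$$kc\,j^{k-1}\sum_m X^mQ_m(g)+j^k\sum_m\big(-mX^{m+1}Q_m(g)+X^mQ_m(g)'\big).$$
Multiplying by $j^2/(2\pi i)$, dividing by $j^{k+2}$, and using $c/j=X$ gives
$$(\theta g)|_{k+2}\gamma=\sum_{m=0}^{p}\Big(X^m\,\theta Q_m(g)+\frac{k-m}{2\pi i}X^{m+1}Q_m(g)\Big).$$
Shifting the index in the second sum ($m\mapsto m-1$) and collecting the coefficient of $X^m$ for $0\le m\le p+1$ yields exactly \eqref{eqthetarecursion}, with the stated conventions $Q_{-1}(g)=0$ and $Q_{p+1}(g)=0$ absorbing the boundary terms. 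The left side of the recursion is identified as $Q_m(\theta g)$ because the coefficients of the polynomial in $X(\gamma,\tau)$ are the holomorphic functions so named in Definition \ref{defquasiscalar}.

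It then remains to check the side conditions of Definition \ref{defquasiscalar}. The coefficients $\theta Q_m(g)$ and $Q_{m-1}(g)$ are holomorphic on $\HH$. Meromorphy at each cusp is preserved, because $\theta$ acts on a $q_\lambda$-expansion termwise (up to the constant factor $\lambda^{-1}$ relating $q\,d/dq$ and $q_\lambda\,d/dq_\lambda$), so it creates no new poles. Hence $\theta g\in Q_{k+2}(\Gam)$ with depth $\le p+1$.

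There is no real obstacle here; the computation is identical for every Fuchsian group. The only points needing care are bookkeeping ones: the correct derivative $X'=-X^2$, which is the source of the $-m$ in $k-m+1$ after the index shift; keeping the normalization of $\theta$ consistent with the $\tfrac{1}{2\pi i}$ in \eqref{eqthetarecursion}; and noting that the sign ambiguity of the lift $\pm\gamma$ is harmless, since $X(\gamma,\tau)$ and the even-weight factors are invariant under $\gamma\mapsto-\gamma$.
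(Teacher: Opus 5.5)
Your proposal is correct and follows essentially the same route as the paper: differentiate $g(\gamma\tau)=j^k\sum_m X^mQ_m(g)$ using $\frac{d}{d\tau}(\gamma\tau)=j^{-2}$, $(j^k)'=kXj^k$ and $X'=-X^2$, then reindex the $X^{m+1}$ terms and divide by $2\pi i$. Your added check that $\theta$ preserves meromorphy at the cusps is a small bookkeeping point that the paper leaves implicit.
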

%% 
%%%%%%%%%%%%%%%%%%%%%%%%%%%%%%%%%%%%%%%%%%%%%%%%%%%%%%%%%%%%%%%%%%%%%%%%%%%%%%%%%%%%%%
%% 
\begin{proof}
Write $g(\gamma\tau)=j(\gamma,\tau)^k\sum_{m=0}^pX(\gamma,\tau)^mQ_m(g)(\tau)$ and differentiate both sides in $\tau$ for $\gamma=\left(\begin{smallmatrix}a&b\\c&d\end{smallmatrix}\right)\in\Gam\subset\SL_2(\RR)$, thus $j=c\tau+d$, $j'=c$, and $\tfrac{d}{d\tau}(\gamma\tau)=j^{-2}$ as $\det\gamma=1$.
 The left side differentiates to $g'(\gamma\tau)j^{-2}$.
 For the right side, $X=c/j$ gives $X'=-c\,j'/j^2=-c^2/j^2=-X^2$, and $j^k{}'=kcj^{k-1}=kXj^k$, using $c=Xj$.
 Thus
\begin{eqnarray*}
\frac{d}{d\tau}\big[j^kX^mQ_m(g)\big]
&=&kXj^kX^mQ_m(g)-mj^kX^{m+1}Q_m(g)+j^kX^mQ_m(g)'
\\
&=&j^k\big[(k-m)X^{m+1}Q_m(g)+X^mQ_m(g)'\big].
\end{eqnarray*}
Summing over $m=0$, $\dots$, $p$ equating with 
$g'(\gamma\tau)j^{-2}=j^{k+2}\cdot(\text{RHS})/j^k$, 
and reindexing the first term ($m\mapsto m-1$) gives
$$g'(\gamma\tau)=j^{k+2}\sum_{m=0}^{p+1}X^m\Big[Q_m(g)'+(k-m+1)Q_{m-1}(g)\Big],$$
i.e., $(g')|_{k+2}\gamma=\sum_mX^m\big[Q_m(g)'+(k-m+1)Q_{m-1}(g)\big]$.  
As $g'=2\pi i\,\theta g$ and $Q_m$ is linear, dividing by $2\pi i$ gives exactly 
\eqref{eqthetarecursion}. 
\end{proof}
%%
%%%%%%%%%%%%%%%%%%%%%%%%%%%%%%%%%%%%%%%%%%%%%%%%%%%%%%%%%%%%%%%%%%%%%%%%%%%%%%%%%%%%%%%%%%55
%% 
\begin{lemma}
[higher-genus Ramanujan identity and Serre derivative]
\label{lemramanujan}
Suppose $\Gam$ has a cusp, with $E:=E_2^\Gam$ as in Proposition \ref{propE2exists} 
thus $Q_0(E)=E$, $Q_1(E)=c:=12/2\pi i$.   
Then 
%%  
%%%%%%%%%%%%%%%%%%%%%%%%%%%%%%%%%%%%%%%%%%%%%%%%%%%%%%%%%%%%%%%%%%%%%%%%%%%%%
%% 
\begin{enumerate}[label=(\roman*)]
\item $E_4^\Gam:=E^2-12\,\theta E\ \in\ M_4(\Gam)$, and consequently
$$\theta E_2^\Gam=\frac{1}{12}\left((E_2^\Gam)^2-E_4^\Gam\right);$$
\item for every $h\in M_{k'}(\Gam)$, 
$\vartheta(h):=\theta h-\tfrac{k'}{12}E_2^\Gam h\ \in\ M_{k'+2}(\Gam)$, the classical Serre derivative, i.e., 
$$\theta h=\frac{k'}{12}E_2^\Gam h+\vartheta(h).$$
\end{enumerate}
\end{lemma}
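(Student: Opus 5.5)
The plan is to read off the depth components of each candidate form from the recursion \eqref{eqthetarecursion} of Lemma \ref{lemthetadepth} and the product formula \eqref{eqQmformula}, check that every component of positive depth cancels, and conclude that the form has depth $0$, i.e.\ is automorphic. Throughout, $\theta=\tfrac{1}{2\pi i}\tfrac{d}{d\tau}$ as in the proof of Lemma \ref{lemthetadepth}, and $c=Q_1(E)=12/2\pi i$ is the constant of \eqref{eqE2anomaly}.

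For (i), apply Lemma \ref{lemthetadepth} to $g=E$ with $k=2$, $p=1$, $Q_0(E)=E$, $Q_1(E)=c$. Since $\theta c=0$, this gives
$$Q_0(\theta E)=\theta E,\qquad Q_1(\theta E)=\tfrac{2}{2\pi i}E,\qquad Q_2(\theta E)=\tfrac{1}{2\pi i}c .$$
From \eqref{eqQmformula} with $h=1$, $j=2$ we get $Q_0(E^2)=E^2$, $Q_1(E^2)=2cE$ and $Q_2(E^2)=c^2$. Subtracting $12$ times the first list from the second yields
$$Q_1(E^2-12\theta E)=2E\Big(c-\tfrac{12}{2\pi i}\Big)=0,\qquad Q_2(E^2-12\theta E)=c\Big(c-\tfrac{12}{2\pi i}\Big)=0 .$$
Hence $E_4^\Gam|_4\gamma=E_4^\Gam$ for all $\gamma\in\Gam$. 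The Ramanujan-type identity $\theta E=\tfrac1{12}(E^2-E_4^\Gam)$ is then just this definition rearranged.

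For (ii), apply the same recursion to $h\in M_{k'}(\Gam)$, which has depth $0$. It gives $Q_1(\theta h)=\tfrac{k'}{2\pi i}h$, and $\theta h$ has depth $\le1$. From \eqref{eqQmformula} we have $Q_1(Eh)=ch$. Therefore
$$Q_1(\vartheta h)=\tfrac{k'}{2\pi i}h-\tfrac{k'}{12}\,c\,h=0,$$
and both $\theta h$ and $Eh$ have depth $\le1$, so $\vartheta h$ has depth $0$. This is the same cancellation already used in Lemma \ref{lemE2connection}, now with the explicit normalization of $c$.

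It remains to upgrade ``depth $0$'' to membership in $M_{k+2}(\Gam)$, which means checking the two local conditions.
\begin{enumerate}[label=(\alph*)]
\item At the cusps: $E$ has a holomorphic $q_\lambda$-expansion at every cusp by Proposition \ref{propE2exists}, and $\theta$ maps holomorphic $q_\lambda$-expansions to holomorphic $q_\lambda$-expansions (up to the harmless factor $1/\lambda$ from $\theta=\lambda^{-1}q_\lambda\,d/dq_\lambda$). So $E^2-12\theta E$ and $\vartheta h$ are holomorphic at all cusps.
\item At the elliptic points: the scalar condition \eqref{eqellipticscalar} is a consequence of the exact transformation law, not an extra requirement. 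So it holds automatically once the form is holomorphic on $\HH$ and $\Gam$-invariant of the right weight.
\end{enumerate}

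The step I expect to need the most care is the legitimacy of manipulating the $Q_m$ as well-defined linear functionals. One has to know that the coefficients in $\sum_m X(\gamma,\tau)^mQ_m(f)(\tau)$ are uniquely determined, so that ``$Q_1=Q_2=0$'' really means $f|_k\gamma=f$. I would argue this as follows: for fixed $\tau$, the values $X(\gamma,\tau)=c_\gamma/(c_\gamma\tau+d_\gamma)$ take infinitely many distinct values as $\gamma$ ranges over the non-elementary group $\Gam$. A polynomial in $X$ vanishing at infinitely many points is identically zero, which gives uniqueness. With that in hand, the computations above are purely formal.
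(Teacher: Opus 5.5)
Your proof is correct and is essentially the paper's. Part (i) is the identical computation of $Q_1,Q_2$ of $E^2$ and $\theta E$ from Lemma \ref{lemthetadepth} and \eqref{eqQmformula}. In (ii) you check $Q_1(\vartheta(h))=0$ directly, rather than first writing $\theta h=h'E+h''$ via Lemma \ref{lemfreeness} and matching $Q_1$; this is a harmless shortcut that makes (ii) independent of freeness.

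Your added remarks on uniqueness of the $Q_m$ and on the local conditions fill in points the paper leaves implicit. At a cusp other than the normalized one, "$\theta$ preserves $q_\lambda$-expansions" should be read through $(\theta f)|_{k+2}\sigma=\theta(f|_k\sigma)+\tfrac{k}{2\pi i}X(\sigma,\tau)\,f|_k\sigma$; the extra term decays at $i\infty$, so your conclusion is unaffected.
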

%%
%%%%%%%%%%%%%%%%%%%%%%%%%%%%%%%%%%%%%%%%%%%%%%%%%%%%%%%%%%%%%%%%%%%%%%%%%%%%%%%%%%%%%%%%%
%%  
\begin{proof}
(ii) Apply Lemma \ref{lemthetadepth} to $g=h$ for $k=k'$, $p=0$, $Q_0(h)=h$, and we obtain  $Q_1(\theta h)=\theta(Q_1(h))+\tfrac{k'}{2\pi i}Q_0(h)=0+\tfrac{k'}{2\pi i}h$,  
since $Q_1(h)=0$, and $h$ has depth $0$.  
 Thus $\theta h\in Q_{k'+2}(\Gam)_{\le 1}$. By Lemma \ref{lemfreeness}, with $j=1$ applied to $Q_{k'+2}(\Gam)$), an element of depth $\le 1$ is uniquely $h'E+h''$ with $h'\in M_{k'}(\Gam)$, $h''\in M_{k'+2}(\Gam)$, and by \eqref{eqQmformula} its $Q_1$ is $ch'$. 
 Matching, $ch'=\tfrac{k'}{2\pi i}h$, i.e., $h'=\tfrac{k'}{2\pi ic}h=\tfrac{k'}{12}h$ using $c=12/2\pi i$. 
Thus $\theta h=\tfrac{k'}{12}Eh+h''$ with $h''=:\vartheta(h)\in M_{k'+2}(\Gam)$, proving (ii).

(i) Apply Lemma \ref{lemthetadepth} to $g=E$ with $k=2$, $p=1$, $Q_0(E)=E$, $Q_1(E)=c$. 
We obtain 
$$Q_1(\theta E)=\theta(Q_1(E))+\tfrac{2}{2\pi i}Q_0(E)=0+\tfrac2{2\pi i}E,\qquad Q_2(\theta E)=\theta(Q_2(E))+\tfrac1{2\pi i}Q_1(E)=0+\tfrac{c}{2\pi i}=\tfrac{c^2}{12},$$
using $c=12/2\pi i$, and $c/2\pi i=c^2/12$). Thus $\theta E\in Q_4(\Gam)_{\le 2}$. By \eqref {eqQmformula}, $E^2=1\cdot E^2$ has $Q_1(E^2)=2cE$, $Q_2(E^2)=c^2$. Hence $E_4^\Gam:=E^2-12\theta E$ has
$$Q_1(E_4^\Gam)=2cE-12\cdot\tfrac2{2\pi i}E=\big(2c-\tfrac{24}{2\pi i}\big)E=\big(\tfrac{24}{2\pi i}-\tfrac{24}{2\pi i}\big)E=0,\qquad Q_2(E_4^\Gam)=c^2-12\cdot\tfrac{c^2}{12}=0,$$
thus $E_4^\Gam$ has depth $0$, i.e., $E_4^\Gam\in M_4(\Gam)$, proving (i). 
\end{proof}
%% 
%%%%%%%%%%%%%%%%%%%%%%%%%%%%%%%%%%%%%%%%%%%%%%%%%%%%%%%%%%%%%%%%%%%%%%%%%%%%%%%%%%%%%%%%%%
%%  
\begin{theorem}
[generalizing {\cite[Thms. 6.1, 6.3]{BCFMN}}]
\label{thmdoubledQVOA}
Suppose $\Gam$ has at least one cusp, so that the generator $E_2^\Gam$ of Proposition \ref{propE2exists} exists.  
Then $Q^{(1/2)}(\Gam)$, with the regrading $Q^{(1/2)}_k(\Gam):=Q_{2k}(\Gam)$, carries the structure of a quasi-vertex operator algebra $(Q^{(1/2)}(\Gam),Y,1,\rho)$, with canonical derivation $\theta=qd/dq$, a suitably normalized lowering operator $\delta$ proportional to 
$\partial/\partial E_2^\Gam$, Euler operator $E$, and $\rho\colon L(1)\mapsto-\delta,\ L(0)\mapsto E,\ L(-1)\mapsto\theta$.  
Consequently $Q(V,\Gam)\cong Q^{(1/2)}(\Gam)\otimes V$, with doubled grading, is a doubled quasi-vertex operator algebra. 
\end{theorem}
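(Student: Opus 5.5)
The proof reduces everything to the polynomial structure $Q(\Gam)=M(\Gam)[E_2^\Gam]$ of Lemma \ref{lemfreeness} and Proposition \ref{propE2exists}, together with the Ramanujan identity and Serre derivative of Lemma \ref{lemramanujan}. The plan is to transport the proof of \cite[Thms.\ 6.1, 6.3]{BCFMN} with $\CC[E_4,E_6]$ replaced by $M(\Gam)$ and $E_2$ by $E_2^\Gam$. We check that the only facts used there are these two structural ones.

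First we define the operators on $Q(\Gam)=M(\Gam)[E]$, $E:=E_2^\Gam$. Because $Q(\Gam)$ is free over $M(\Gam)$ on the powers $E^j$, the formal derivative $\partial/\partial E$ (with $M(\Gam)$ treated as constants) is a well-defined derivation of degree $-2$. Set
\begin{equation*}
\delta:=\frac{12}{2\pi i}\frac{\partial}{\partial E}=c\,\frac{\partial}{\partial E}.
\end{equation*}
By \eqref{eqQmformula}, $\delta$ coincides with $Q_1$ on $Q(\Gam)$, since $Q_1(hE^j)=jc\,hE^{j-1}$. We take $E$ (the Euler operator) to be multiplication by $k/2$ on $Q_k(\Gam)$, so that it is compatible with the regrading $Q^{(1/2)}_k=Q_{2k}$. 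We take $\theta=q\,d/dq$.

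Next we verify the $\mathfrak{sl}_2$ relations, which are what $\rho$ requires: $[E,\theta]=\theta$, $[E,\delta]=-\delta$, and $[\delta,\theta]=2E$ up to the sign fixed by $\rho(L(1))=-\delta$. The first two are just statements of degree. For the third, all three operators are derivations, so it suffices to check it on generators, namely on $h\in M_{k'}(\Gam)$ and on $E$.
\begin{itemize}
\item On $h$: Lemma \ref{lemramanujan}(ii) gives $\theta h=\tfrac{k'}{12}Eh+\vartheta(h)$. Hence $\delta\theta h=\tfrac{k'}{12}c\,h$, while $\theta\delta h=0$.
\item On $E$: Lemma \ref{lemramanujan}(i) gives $\theta E=\tfrac1{12}(E^2-E_4^\Gam)$. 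Hence $\delta\theta E=\tfrac{2c}{12}E$, while $\theta\delta E=\theta c=0$.
\end{itemize}
Both cases show $[\delta,\theta]$ acts as $\tfrac{c}{12}\cdot(\text{weight})=\tfrac{2}{2\pi i}E$. The factor of $E$ is $\tfrac{2}{2\pi i}$ rather than $2$, so we absorb the constant by a scalar renormalization of $\delta$ (this is the "suitably normalized" in the statement). Equivalently, one can verify $Q_1(\theta g)-\theta Q_1(g)=\tfrac{k}{2\pi i}g$ for all $g\in Q_k(\Gam)$ directly from Lemma \ref{lemthetadepth} with $m=1$, which avoids the generator check entirely.

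The vertex operator is the commutative one, $Y(f,z)g=(e^{z\theta}f)\,g$, with vacuum $1$. The quasi-VOA axioms (locality, translation covariance with $L(-1)=\theta$, and the requirement that $\rho$ be a representation of $\mathfrak{sl}_2$, or of the positive Virasoro part, acting by derivations compatible with $Y$) then follow formally as in \cite{BCFMN}. For a commutative vertex algebra attached to a differential ring, these reduce to the facts that $\theta$, $\delta$ and $E$ are derivations satisfying the $\mathfrak{sl}_2$ relations, together with the standard conjugation formula $e^{w\delta}e^{z\theta}$. For the final assertion, Theorem \ref{thmQiso} gives $Q(V,\Gam)\cong Q(\Gam)\otimes V^{(2)}$ as graded modules, and the tensor product of QVOAs from \cite[Appendix B]{BCFMN} puts the doubled QVOA structure on $Q^{(1/2)}(\Gam)\otimes V$.

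The main obstacle is the normalization and sign bookkeeping in the $[\delta,\theta]$ relation. In particular we must make sure that the $M(\Gam)$-part of $\theta$, that is $\vartheta(h)$ and $E_4^\Gam$, is annihilated by $\delta$. This holds precisely because these lie in $M(\Gam)$, which is guaranteed by Lemma \ref{lemramanujan} and by the freeness in Lemma \ref{lemfreeness}. Beyond that point, nothing about $\Gam(1)$ enters.
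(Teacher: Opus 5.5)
Your proposal is correct and follows essentially the paper's route. Lemma \ref{lemfreeness} makes $\delta\propto\partial/\partial E_2^\Gam$ well defined, and Lemma \ref{lemramanujan} shows $[\delta,\theta]$ is a scalar multiple of the weight, so a rescaling gives the $\mathfrak{sl}_2$ relations; the QVOA axioms are then formal for $Y(f,z)g=(e^{z\theta}f)g$, and Theorem \ref{thmQiso} together with the tensor product handles $Q(V,\Gam)$. The differences are minor: you check the bracket on the ring generators $M(\Gam)\cup\{E_2^\Gam\}$ via the derivation property (or directly via $\delta=Q_1$ and Lemma \ref{lemthetadepth} at $m=1$) rather than on all monomials $h(E_2^\Gam)^j$, and your normalization $[E,\theta]=\theta$, $[E,\delta]=-\delta$ is the one that actually matches $\rho(L(0))=E$, $\rho(L(-1))=\theta$, whereas the paper's stated normalized form follows the standard $\mathfrak{sl}_2$-triple convention.
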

%% 
%%%%%%%%%%%%%%%%%%%%%%%%%%%%%%%%%%%%%%%%%%%%%%%%%%%%%%%%%%%%%%%%%%%%%%%%%%%%%%%%%%%%%%%%
%% 
\begin{proof}
By Proposition \ref{propE2exists}, $Q(\Gam)=M(\Gam)[E_2^\Gam]$ is a free polynomial extension of $M(\Gam)$, thus $\delta:=\partial/\partial E_2^\Gam$ (formal differentiation of $f=\sum_jh_jE_2^{\Gam,j}$ in the variable $E_2^\Gam$, well defined by freeness, Lemma \ref{lemfreeness}) is a well-defined operator on $Q(\Gam)$ lowering weight by $2$. 
 We verify that $[\theta,\delta]$ acts as an explicit scalar multiple of weight on each 
$Q_k(\Gam)$ by using Lemma \ref{lemramanujan}.   
 For $f=hE^j$ and $h\in M_{k'}(\Gam)$, $E=E_2^\Gam$, $k=k'+2j$, we have $\delta f=jhE^{j-1}$. 
 Using Lemma \ref{lemramanujan}(i)-(ii), we obtain 
$$\theta\delta f=j(j-1)hE^{j-2}\theta E+jE^{j-1}\theta h=\frac{j(j-1)}{12}hE^j-\frac{j(j-1)}{12}hE_4^\Gam E^{j-2}+\frac{jk'}{12}hE^j+jE^{j-1}\vartheta(h),$$
while
$$\theta f=jhE^{j-1}\theta E+E^j\theta h=\frac{j}{12}hE^{j+1}-\frac j{12}hE_4^\Gam E^{j-1}+\frac{k'}{12}hE^{j+1}+E^j\vartheta(h),$$
$$\delta\theta f=\frac{(j+k')(j+1)}{12}hE^j-\frac{j(j-1)}{12}hE_4^\Gam E^{j-2}+jE^{j-1}\vartheta(h).$$
Subtracting, the $E_4^\Gam$-terms and $\vartheta(h)$-terms cancel exactly, leaving
$$[\theta,\delta]f=\theta\delta f-\delta\theta f=\frac{j(j-1)+jk'-(j+k')(j+1)}{12}\,hE^j=-\frac{2j+k'}{12}f=-\frac{k}{12}f,$$
i.e., $[\theta,\delta]$ acts as the scalar $-k/12$ on $Q_k(\Gam)$ for every $k$. 
 Note that $\theta$ raises and $\delta$ lowers weight by $2$ uniformly, and  
$-k/12$ is an affine function of $k$. 
 We can repeat the same elementary computation used classically. 
 For a scalar $A$ acting as an affine function of weight and a weight-shifting $B$, 
$[A,B]$ is again a scalar multiple of $B$, cf. \cite[Appendix A]{BCFMN} which 
 gives $[E,\theta]$ and $[E,\delta]$ as scalar multiples of 
$\theta$, $\delta$ respectively, for $E:=-\tfrac1{12}[\theta,\delta]$ acting as $\tfrac{k}{12}\cdot\mathrm{id}$. 
 Rescale $\delta$ and $E$ by the same constants used for 
$\Gam(1)$ in \cite[eq. before Thm. 6.1]{BCFMN} which applicable since we have shown the identities of Lemma \ref{lemramanujan} hold with exactly the classical coefficient $12$, unchanged for general $\Gam$. 
That puts these in the normalized form $[\theta,\delta]=E$, $[E,\theta]=2\theta$,  
$[E,\delta]=-2\delta$. 
With the bracket relations established, translation covariance 
and the commutator identity with $\delta$ (the two QVOA axioms) 
are verified by exactly the computation of \cite[proof of Thm. 6.1]{BCFMN}. 
That uses only these bracket relations and the derivation property of $\theta$, $\delta$ on $M(\Gam)[E_2^\Gam]$, itself immediate from freeness. 
 No property of $\Gam(1)$ beyond the existence of a free generator $E_2^\Gam$, now established in Proposition \ref{propE2exists}, enters anywhere. 
 Passage to $Q(V,\Gam)\cong Q^{(1/2)}(\Gam)\otimes V$
 is then similar as in \cite[Thm. 6.3]{BCFMN} via Theorem \ref{thmQiso}. 
\end{proof}
%% 
%%%%%%%%%%%%%%%%%%%%%%%%%%%%%%%%%%%%%%%%%%%%%%%%%%%%%%%%%%%%%%%%%%%%%%%%%%%%%%%%%%%%
%% 
\section{The lowering operator and the kernel theorem}\label{seclowering}
Throughout this section we assume $\Gam$ has at least one cusp, so that the generator 
$E_2^\Gam$ of Proposition \ref{propE2exists} is available.
 The paragraph \S\ref{seccocompact} below treats what survives, in particular  for cocompact $\Gam$.
Define, exactly as in \cite[eq. (24)]{BCFMN} but with $E_2$ replaced by $E_2^\Gam$, 
$$\Lam:=\frac{12}{2\pi i}\,\frac{\partial}{\partial E_2^\Gam}+L(1)\colon\;Q(V,\Gam)\to Q(V,\Gam),$$
a weight-lowering operator of degree $-2$. 
%% 
%%%%%%%%%%%%%%%%%%%%%%%%%%%%%%%%%%%%%%%%%%%%%%%%%%%%%%%%%%%%%%%%%%%%%%%%%%%%%%%%%%%%%%%%%
%%  
\begin{theorem}
[generalizing {\cite[Thm. 7.1 and Cor. 7.2]{BCFMN}}]
\label{thmkernel}
For $f\in Q_k(V,\Gam)$,
$$f\Vert_k\gamma(\tau)=\exp(X(\gamma,\tau)\Lam)f(\tau),\qquad\gamma\in\Gam,$$
and consequently $M(V,\Gam)=\ker\Lam$.  
\end{theorem}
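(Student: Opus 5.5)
Via Theorem \ref{thmQiso} we reduce to pure tensors $f=hE^j v$, with $h\in M_{k'}(\Gam)$, $E=E_2^\Gam$, and $v\in V^{(2)}$ homogeneous of conformal degree $n$, so that $k=k'+2j+2n$ under the doubled grading. Both sides of the claimed identity are linear in $f$. Since $\Lam$ is defined on $Q(V,\Gam)\cong Q(\Gam)\otimes V^{(2)}$ as $\tfrac{12}{2\pi i}\partial/\partial E$ acting on the first factor plus $L(1)$ acting on the second, the two summands commute. Hence
$$\exp(X\Lam)=\exp\Big(X\tfrac{12}{2\pi i}\tfrac{\partial}{\partial E}\Big)\exp(XL(1)).$$

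We compute each factor separately.

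\textbf{Scalar part.} By \eqref{eqE2anomaly} and $h|_{k'}\gamma=h$,
$$(hE^j)|_{k'+2j}\gamma=h\,(E+cX)^j,\qquad c=\tfrac{12}{2\pi i}.$$
The right-hand side is exactly $\exp\!\big(cX\,\partial/\partial E\big)(hE^j)$, by Taylor's formula in the free variable $E$. This is well defined because of the freeness in Lemma \ref{lemfreeness}.

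\textbf{Vector part.} We unwind $f\Vert_k\gamma=j^{-k}K(\gamma,\tau)^{-1}f(\gamma\tau)$ with $\det\gamma=1$. For constant $v\in V_n$ we have
$$K(\gamma,\tau)^{-1}v=j^{2L(0)}e^{cjL(1)}v,$$
where in this line $c$ denotes the matrix entry of $\gamma$. Combining with the scalar factor $j^{-(k'+2j)}$ and the automorphy of the scalar function, the powers of $j$ must collapse. Use the $\mathfrak{sl}_2$ conjugation $j^{2L(0)}L(1)j^{-2L(0)}=j^{-2}L(1)$ (from $[L(0),L(1)]=-L(1)$). This gives
$$j^{-2n}\,j^{2L(0)}e^{c\,j\,L(1)}v=e^{(c/j)L(1)}v=\exp(XL(1))v.$$
This is precisely the computation behind \cite[Thm.~7.1]{BCFMN}, and it is purely Lie-theoretic, so it holds for any $\gamma\in\GL_2(\RR)^+$.

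\textbf{Main obstacle.} The step needing care is the bookkeeping of weights in the doubled grading and the sign/normalization conventions of $K$. We must check that $j^{-k}$ splits exactly as $j^{-(k'+2j)}\cdot j^{-2n}$ and that the exponent sign gives $+X L(1)$. I would verify this first on $V_n$ with $L(1)v\in V_{n-1}$, tracking a single term of the exponential. Multiplying the scalar and vector parts, which commute, then yields the displayed formula for $f=hE^jv$, and linearity gives it for all $f$.

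\textbf{Kernel statement.} If $\Lam f=0$, the formula gives $f\Vert_k\gamma=f$. The holomorphy conditions hold as well: cusp conditions are inherited from the $q$-expansions of $h$, $E$, and the elliptic condition (b) is the $\gamma=\gamma_i$, $\tau=\tau_i$ specialization of automorphy. Hence $f\in M_k(V,\Gam)$.

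Conversely, if $f\in M_k(V,\Gam)$, then $\exp(X(\gamma,\tau)\Lam)f=f$ for all $\gamma\in\Gam$. Expand in powers of $X$: $\Lam$ is locally nilpotent, being weight-lowering on a graded space bounded below, so the exponential is a polynomial in $X$. The coefficient of $X$ is $\Lam f$. Because $\Gam$ is non-elementary, the values $X(\gamma,\tau)=c/(c\tau+d)$ for fixed $\tau$ take infinitely many distinct values. The polynomial identity therefore forces $\Lam f=0$, using uniqueness of the $Q_m$ as in the injectivity part of Lemma \ref{lemfreeness}.
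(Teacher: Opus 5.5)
Your proposal is correct and follows the paper's route. You decompose $f$ into terms $hE^jv$ via Theorem~\ref{thmQiso} and Lemma~\ref{lemfreeness}, verify the transformation law by combining \eqref{eqE2anomaly} with $K(\gamma,\tau)^{-1}=j^{2L(0)}e^{cjL(1)}$ (a computation you carry out explicitly where the paper defers to \cite{BCFMN}), and get $M(V,\Gam)=\ker\Lam$ from $X(\gamma,\tau)$ taking infinitely many values for fixed $\tau$; your final appeal to the injectivity part of Lemma~\ref{lemfreeness} is unnecessary, since a $V$-valued polynomial in $X$ vanishing at infinitely many points already has all coefficients zero.
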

%% 
%%%%%%%%%%%%%%%%%%%%%%%%%%%%%%%%%%%%%%%%%%%%%%%%%%%%%%%%%%%%%%%%%%%%%%%%%%%%%%%%%%%%%%5
%% 
\begin{proof}
Write $f=\sum_{m=0}^n(E_2^\Gam)^m\sum_\ell f_{m,\ell}v_{m,\ell}$ as in \cite[eq. (27)]{BCFMN} 
which is possible by Proposition \ref{propE2exists} and Theorem \ref{thmQiso}, exactly as one writes $f=\sum E_2^mh_m$ for $\Gam(1)$ using $Q=\CC[E_2,E_4,E_6]$. 
 The computation of \cite[proof of Thm.\ 7.1]{BCFMN} is then a purely formal manipulation of binomial coefficients and the operators $L(1)$, $\partial/\partial E_2^\Gam$ using: 
the transformation law \eqref{eqE2anomaly} of $E_2^\Gam$ in place of eq. (7) of 
 loc. cit., and
 the defining transformation law $f\Vert_k\gamma=j^{-k+2L(0)}e^{cjL(1)}f(\gamma\tau)$, valid for the $V$-valued automorphy factor by definition for arbitrary $\gamma$. 
No further property of $\Gam(1)$ is used. 
 The identity therefore holds for $E_2^\Gam$ in place of $E_2$. 
 The second statement follows exactly as in \cite[Cor. 7.2]{BCFMN}.  
 Namely, $f\in M_k(V,\Gam)$ iff $f\Vert_k\gamma=f$
 for all $\gamma\in\Gam$ iff $\exp(X\Lam)f=f$ for all $X=X(\gamma,\tau)$ arising from $\Gam$. 
 Since $\Gam$ is non-elementary, it is a Fuchsian group of the first kind, the values 
$X(\gamma,\tau)$ are not confined to a discrete set as $\gamma$ ranges over $\Gam$ for fixed generic $\tau$. Thus this forces $\Lam f=0$ again because $\Lam$ lowers weight by $2$ and 
$Q(V,\Gam)$ is graded. 
\end{proof}
%% 
%%%%%%%%%%%%%%%%%%%%%%%%%%%%%%%%%%%%%%%%%%%%%%%%%%%%%%%%%%%%%%%%%%%%%%%%%%%%%%%%%%%%%%%%%%%%
%% 
\begin{theorem}
[generalizing {\cite[Thm. 7.7, Cor. 7.8]{BCFMN}}] 
\label{thmsurjectivity}
For any VOA $V$, $\Lam\colon Q_{2k+2}(V,\Gam)\to Q_{2k}(V,\Gam)$ is surjective unless perhaps $k=0$. If $V$ is of CFT-type then $\Lam\colon Q(V,\Gam)\to Q(V,\Gam)$ is surjective.
\end{theorem}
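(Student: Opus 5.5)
The plan is to transport the argument of \cite[Thm.~7.7, Cor.~7.8]{BCFMN} verbatim, using the identification $Q(V,\Gam)\cong Q(\Gam)\otimes V^{(2)}$ of Theorem \ref{thmQiso} together with the freeness $Q(\Gam)=M(\Gam)[E_2^\Gam]$ of Proposition \ref{propE2exists} (via Lemma \ref{lemfreeness}). By these, every element of $Q_{2k}(V,\Gam)$ is a finite sum $\sum_{m,\ell}h_{m,\ell}(E_2^\Gam)^m v_{m,\ell}$ with $h_{m,\ell}\in M(\Gam)$ and $v_{m,\ell}\in V$ homogeneous. The operator $\Lam=\tfrac{12}{2\pi i}\partial/\partial E_2^\Gam+L(1)$ acts $M(\Gam)$-linearly, as $\partial/\partial E_2^\Gam$ kills $M(\Gam)$ and $L(1)$ acts only on the $V$-factor. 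It therefore suffices to prove surjectivity of
\[
\Lam\colon \CC[E]\otimes V\to\CC[E]\otimes V,\qquad \Lam=c\,\partial_E+L(1),\quad c=\tfrac{12}{2\pi i},
\]
on each weight piece. Here $E=E_2^\Gam$, $E^m v$ with $v\in V_n$ has weight $2m+2n$, and the $M(\Gam)$-coefficients are carried along passively. This reduction isolates exactly the point where the cusp hypothesis enters: only the existence of a free generator is used, and no other property of $\Gam$.

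For the surjectivity itself I would construct explicit preimages. Given $E^m v$, take the ansatz
\[
w=\sum_{i\ge0}a_i\,E^{m+1+i}\,L(1)^i v,
\]
and choose the scalars $a_i$ so that $\Lam w=E^m v$. Since $L(1)$ is locally nilpotent on each $F_NV$, the sum terminates. Expanding $\Lam w$ gives
\[
\Lam w=\sum_i\big(c(m+1+i)a_i+a_{i-1}\big)E^{m+i}L(1)^iv .
\]
So $a_0=1/(c(m+1))$ and $a_i=-a_{i-1}/(c(m+1+i))$, which are always solvable because $m+1+i\ge1$. This shows that every element of the form $E^m v$ is hit, and surjectivity follows by linearity.

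The apparent problem is that the recursion needs no hypothesis at all, which would give surjectivity unconditionally. That contradicts the stated exception at $k=0$ and the CFT-type hypothesis, so the main obstacle is pinning down which elements are genuinely allowed in $Q(V,\Gam)$. Theorem \ref{thmQiso} uses the doubled space $V^{(2)}$, and weights may be constrained to be nonnegative. I expect the true subtlety is weight bookkeeping. The target $Q_{2k}$ in doubled grading contains $E^0 v$ with $v\in V_k$, and its preimage $E^1 v+\dots$ must lie in weight $2k+2$. When $k=0$ and $V$ has negative-degree or extra weight-zero pieces, the required preimage may fail to lie in the admissible range: its weight degenerates, or $L(1)$ acting on $V_0$ does not vanish. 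I would therefore carefully rederive, following \cite[Thm.~7.7]{BCFMN}, the precise grading on $Q(\Gam)\otimes V^{(2)}$. I would then check that the constructed $w$ stays in $Q_{2k+2}(V,\Gam)$ for $k\ne0$, and, using CFT-type ($V_0=\CC\mathbf 1$, $V_n=0$ for $n<0$, $L(1)\mathbf 1=0$), that the $k=0$ case is also covered, since there only $\mathbf 1$ and $M(\Gam)$-multiples occur.
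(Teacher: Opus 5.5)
\paragraph{Verdict.} Your proof is correct, and it takes a different route from the paper's: it actually proves more than the statement asks.

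\paragraph{The paper's route.} Following \cite[Thm.~7.7]{BCFMN}, the paper runs an induction with two inputs. The first is the decomposition $Q_{2k}(V,\Gam)=\bigoplus_r Q_{2r}(\Gam)\otimes V_{k-r}$. The second is a VOA-specific fact: $L(1)\colon V_{k+1}\to V_k$ is surjective for $k\neq0$ (\cite[Lemma~7.6]{BCFMN}, via \cite{DLM}). That fact can genuinely fail at $k=0$: for the Heisenberg VOA, $L(1)V_1=0\neq V_0=\CC\mathbf 1$. This failure is where the hedge ``unless perhaps $k=0$'' comes from. It is also why the CFT-type clause is needed, where the missing target $\CC\mathbf 1$ is hit by $\tfrac{2\pi i}{12}E_2^\Gam\mathbf 1$.

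\paragraph{Your route.} You invert the other summand of $\Lam$. Formal integration in $E_2^\Gam$ is a right inverse of $\partial/\partial E_2^\Gam$ on $M(\Gam)[E_2^\Gam]$, and this is legitimate by Lemma \ref{lemfreeness}. That right inverse commutes with $L(1)$, so local nilpotence of $L(1)$ makes your recursion a terminating geometric series. This uses nothing about $V$ beyond a grading bounded below, together with $c\neq0$. It therefore gives surjectivity of $\Lam\colon Q_{2k+2}(V,\Gam)\to Q_{2k}(V,\Gam)$ for every $k$ and every VOA, and the CFT-type assertion follows automatically.

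\paragraph{Trade-off.} Your preimages have strictly larger $E_2^\Gam$-degree, and hence larger depth, than the input. Preimages obtained by inverting $L(1)$ on the $V$-factor need not raise it. That is the one thing the paper's heavier input buys.

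\paragraph{Your last paragraph.} You should drop it, because it raises a non-issue.
\begin{itemize}
\item There is no contradiction: ``unless perhaps'' asserts nothing about $k=0$, so proving surjectivity there is simply a stronger result.
\item There is no admissibility question. Each term $E^{m+1+i}L(1)^iv$ lies in $Q_{2(m+1+i)}(\Gam)\otimes V_{n-i}$, which has doubled weight $2(m+n)+2$. So $w\in Q_{2k+2}(V,\Gam)$ for every $k$, including $k=0$.
\item Nonvanishing of $L(1)$ on $V_0$, or negative-degree pieces of $V$, cause no trouble. Your correction terms $a_i$ absorb exactly these.
\end{itemize}
The conclusion should therefore be stated unconditionally rather than left as something still to be ``carefully rederived.''
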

%% 
%%%%%%%%%%%%%%%%%%%%%%%%%%%%%%%%%%%%%%%%%%%%%%%%%%%%%%%%%%%%%%%%%%%%%%%%%%%%%%%%%%%%%%%%%%%%
%% 
\begin{proof}
Identically to \cite[proof of Thm. 7.7, Cor. 7.8]{BCFMN}, the induction there uses only the decomposition $Q_{2k}(V,\Gam)=\bigoplus_rQ_{2r}(\Gam)\otimes V_{k-r}$ (Theorem \ref{thmQiso}) and the classical fact (\cite[Lemma 7.6]{BCFMN}, citing \cite{DLM}) that 
$L(1)\colon V_{k+1}\to V_k$ is surjective for $k\ne0$ which is a statement purely depending on  $V$ with no dependence on $\Gam$. 
\end{proof}
%%
%%%%%%%%%%%%%%%%%%%%%%%%%%%%%%%%%%%%%%%%%%%%%%%%%%%%%%%%%%%%%%%%%%%%%%%%%%%%%%%%%
%% 
\section{The weight-preserving operator and dimension formulas}
\label{secPop}
We continue to assume that $\Gam$ has at least one cusp, see \S\ref{seccocompact} for the cocompact case. Define $P:=\exp\!\big(-\tfrac{2\pi i}{12}E_2^\Gam\otimes L(1)\big)\colon Q(\Gam)\otimes V^{(2)}\xrightarrow{\sim}Q(V,\Gam)$ exactly as in \cite[\S7.3]{BCFMN}  composed with $\iota$.
%% 
%%%%%%%%%%%%%%%%%%%%%%%%%%%%%%%%%%%%%%%%%%%%%%%%%%%%%%%%%%%%%%%%%%%%%%%%%%%%%%%%
%% 
\begin{theorem}
[generalizing {\cite[Thm.\ 7.11]{BCFMN}}]
\label{thmPop}
Suppose $\Gam$ has at least one cusp and $V$ is of CFT-type. Then $P$ restricts to a surjection $M(\Gam)\otimes V^{(2)}\twoheadrightarrow M(V,\Gam)$ weight-preserving and $M(\Gam)$-linear, giving an isomorphism
$$P\colon M(\Gam)\otimes V^{(2)}\xrightarrow{\sim}M(V,\Gam).$$ 
\end{theorem}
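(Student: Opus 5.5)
The plan follows \cite[\S7.3]{BCFMN}. The two inputs are the freeness $Q(\Gam)=M(\Gam)[E_2^\Gam]$ (Proposition \ref{propE2exists}, Lemma \ref{lemfreeness}) and the kernel characterization $M(V,\Gam)=\ker\Lam$ (Theorem \ref{thmkernel}). Write $E=E_2^\Gam$ and $c=\tfrac{12}{2\pi i}$, so that $\Lam=c\,\partial/\partial E+L(1)$ and $P=\exp(-c^{-1}E\otimes L(1))\circ\iota$.

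The first step is well-definedness and $M(\Gam)$-linearity. On a homogeneous $h\otimes v$ with $h\in M_{k'}(\Gam)$ and $v\in V_n$, the exponential is a finite sum because $L(1)$ is locally nilpotent. It produces $\sum_{m}\frac{(-1)^m}{m!c^m}hE^m L(1)^m v$, and each term has total weight $k'+2m+2(n-m)=k'+2n$. Hence $P$ is weight-preserving for the doubled grading. It is visibly $M(\Gam)$-linear, since $E$ commutes with multiplication by $M(\Gam)$.

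The second step shows that $P$ lands in $\ker\Lam$. On $Q(\Gam)\otimes V$, the operators $\partial/\partial E$ (well defined by freeness) and $L(1)$ commute, because they act on different tensor factors. Therefore $c\,\partial_E\exp(-c^{-1}EL(1))=-L(1)\exp(-c^{-1}EL(1))$. Applying this to $h\otimes v$ with $\partial_E h=0$ gives $\Lam P(h\otimes v)=-L(1)P(h\otimes v)+L(1)P(h\otimes v)=0$. By Theorem \ref{thmkernel}, $P(M(\Gam)\otimes V^{(2)})\subseteq M(V,\Gam)$.

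For injectivity, I would use the fact that $P$ is the restriction of the invertible map $\exp(-c^{-1}E\otimes L(1))\circ\iota$ on $Q(\Gam)\otimes V^{(2)}$, whose inverse is $\exp(+c^{-1}E\otimes L(1))$. Here $\iota$ is an isomorphism by Theorem \ref{thmQiso}, so injectivity is automatic.

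The real content is surjectivity, and I expect this to be the main obstacle. Given $f\in M_k(V,\Gam)=\ker\Lam$, set $g:=\exp(c^{-1}E\,L(1))f\in Q_k(\Gam)\otimes V^{(2)}$, so that $f=P(g)$. I claim $\partial_E g=0$. Indeed, $\partial_E g=\exp(c^{-1}EL(1))\big(c^{-1}L(1)f+\partial_E f\big)=c^{-1}\exp(c^{-1}EL(1))\Lam f=0$. Expanding $g=\sum_j E^j g_j$ with $g_j\in M(\Gam)\otimes V$, which freeness makes unique, the condition $\partial_E g=\sum_j jE^{j-1}g_j=0$ together with freeness forces $g_j=0$ for $j\ge1$. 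Hence $g\in M(\Gam)\otimes V^{(2)}$. The delicate points are the following. First, one must justify that $\partial_E$ is a derivation on $Q(\Gam)\otimes V$ commuting with $L(1)$, which holds because $L(1)$ is $Q(\Gam)$-linear. Second, the identification via $\iota$ must be compatible with the $\partial_E$-decomposition. Third, nilpotency makes all exponentials finite on each $F_NV$.

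With this argument the CFT-type hypothesis enters only through the surjectivity of $\Lam$ used to match dimensions (Theorem \ref{thmsurjectivity}) in \cite{BCFMN}. As a cross-check, I would verify that $\dim$ of both sides agree via Theorem \ref{thmQiso} and Theorem \ref{thmsurjectivity}: $\dim\ker\Lam|_{Q_k}=\dim Q_k(V,\Gam)-\dim Q_{k-2}(V,\Gam)$, which telescopes, using the freeness count of Lemma \ref{lemfreeness}, to $\sum_i\dim M_{2i}(\Gam)\dim V_{k/2-i}$.
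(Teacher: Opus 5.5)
Your proposal is correct and is essentially the paper's argument. The paper, following BCFMN, expands $f=\sum_m (E_2^\Gam)^m f_m$ and compares coefficients of powers of $E_2^\Gam$ in $\Lam f=0$; this gives $f_{m+1}=-\tfrac{1}{c(m+1)}L(1)f_m$ and hence $f=P(f_0)$, and your intertwining identity $\Lam\circ P=P\circ c\,\partial/\partial E$ is simply the closed form of that induction. One small correction to your closing remark: the paper says CFT-type enters only through $V_{<0}=0$, i.e. the local nilpotence of $L(1)$ that makes your exponentials finite, and not through surjectivity of $\Lam$, which you need only for the dimension cross-check.
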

%% 
%%%%%%%%%%%%%%%%%%%%%%%%%%%%%%%%%%%%%%%%%%%%%%%%%%%%%%%%%%%%%%%%%%%%%%%%%%%%%%%%%%%%%%%%
%% 
\begin{proof}
The proof of \cite[Thm. 7.11]{BCFMN} expresses, for $f\in M_k(V,\Gam)$, the condition 
$\Lam f=0$ (Theorem \ref{thmkernel}) in terms of the expansion 
$f=\sum_m(E_2^\Gam)^m\sum_\ell f_{m,\ell}v_{m,\ell}$ obtaining by an elementary induction (comparing coefficients of powers of $E_2^\Gam$) that $f$ is the $P$-image of its bottom, i.e., $m=0$, piece, and using only that $V_{<0}=0$ for a CFT-type VOA (\cite[Lemma 5.2]{DM}, again a statement about $V$ alone). 
 This argument has no reference to $\Gam(1)$ and applies with $E_2^\Gam$ in place of $E_2$.
\end{proof}
%% 
%%%%%%%%%%%%%%%%%%%%%%%%%%%%%%%%%%%%%%%%%%%%%%%%%%%%%%%%%%%%%%%%%%%%%%%%%%%%%%%%%%%%%%
%% 
\begin{corollary}
[dimension formula, $\Gam$ with a cusp]
\label{cordimformula}
Suppose $\Gam$ has at least one cusp. If $V$ is a VOA of CFT-type, 
$$\dim M_k(V,\Gam)=\sum_{i=0}^{k/2}\dim M_{2i}(\Gam)\,\dim V_{k/2-i},$$
where $\dim M_{2i}(\Gam)$ is given by Proposition \ref{propRR}.
 Equivalently, in terms of graded dimension generating functions, 
\begin{eqnarray*}
\sum_{k\ge 0}\dim M_{2k}(V,\Gam)\,q^k=\dim_qV\cdot\Big(\sum_{i\ge 0}\dim M_{2i}(\Gam)\,q^i\Big),
\\
\sum_{k\ge 0}\dim Q_{2k}(V,\Gam)\,q^k=\frac{\dim_qV}{1-q}\cdot\Big(\sum_{i\ge 0}\dim M_{2i}(\Gam)\,q^i\Big). 
\end{eqnarray*}
\end{corollary}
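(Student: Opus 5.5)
The plan is to read the dimension formula off the isomorphism of Theorem \ref{thmPop} by counting weights on both sides, and then to obtain the $Q$-version from Theorem \ref{thmQiso} together with Proposition \ref{propE2exists}.

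\emph{Step 1 (weight decomposition of the source).} First I fix the grading conventions for $V^{(2)}$, following \cite{BCFMN}. A vector $v\in V_n$ has weight $2n$ in $V^{(2)}$, and $M(\Gam)\otimes V^{(2)}$ carries the total weight. The weight-$k$ piece is then
$$\big(M(\Gam)\otimes V^{(2)}\big)_k=\bigoplus_{i=0}^{k/2}M_{2i}(\Gam)\otimes V_{k/2-i}.$$
Here I use that $M_{k'}(\Gam)=0$ for odd $k'$ and for $k'<0$ (Proposition \ref{propRR}), and that $V_n=0$ for $n<0$ because $V$ is of CFT-type. Consequently only even $k$ contribute. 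For odd $k$ both sides vanish, which is consistent with the convention that the sum is empty.

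\emph{Step 2 (transport along $P$).} Theorem \ref{thmPop} says that $P$ is a weight-preserving linear isomorphism $M(\Gam)\otimes V^{(2)}\to M(V,\Gam)$. It therefore restricts, for each $k$, to an isomorphism between the weight-$k$ pieces. Taking dimensions gives $\dim M_k(V,\Gam)=\sum_i\dim M_{2i}(\Gam)\dim V_{k/2-i}$. Each term is finite: $\dim V_n<\infty$ for a VOA, and Proposition \ref{propRR} gives $\dim M_{2i}(\Gam)$. The first generating-function identity is the same statement, since it is a Cauchy product of $\dim_qV=\sum_n\dim V_nq^n$ with $\sum_i\dim M_{2i}(\Gam)q^i$.

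\emph{Step 3 (the quasi-automorphic series).} Theorem \ref{thmQiso} gives $Q(V,\Gam)\cong Q(\Gam)\otimes V^{(2)}$ as graded spaces. Proposition \ref{propE2exists}, through Lemma \ref{lemfreeness}, gives $Q_{2m}(\Gam)\cong\bigoplus_{j=0}^{m}M_{2m-2j}(\Gam)$. It follows that
$$\sum_m\dim Q_{2m}(\Gam)\,q^m=\frac{1}{1-q}\sum_i\dim M_{2i}(\Gam)\,q^i,$$
where the factor $1/(1-q)$ accounts for the powers of $E_2^\Gam$, each of which carries weight $2$, i.e. $q$-degree $1$. Multiplying by $\dim_qV$ gives the second identity.

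The only point needing care is the grading bookkeeping: the weight of $V_n$ inside $V^{(2)}$ must be exactly $2n$, so that $P$, which involves $E_2^\Gam\otimes L(1)$, is homogeneous. Indeed $E_2^\Gam$ raises the modular weight by $2$, while $L(1)$ lowers the $V^{(2)}$-weight by $2$, so the total weight is preserved. This is already built into the statement of Theorem \ref{thmPop}, so I would simply verify it against the definition of $V^{(2)}$ in \cite{BCFMN}.
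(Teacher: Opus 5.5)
Your proposal is correct and follows the paper's proof, which is likewise a one-line deduction. The formula for $M$ is read off from the weight-preserving isomorphism $P$ of Theorem \ref{thmPop}. The $Q$-series comes from Theorem \ref{thmQiso} together with the freeness $Q_{2m}(\Gam)\cong\bigoplus_{j=0}^m M_{2m-2j}(\Gam)$ (Proposition \ref{propE2exists} via Lemma \ref{lemfreeness}), which produces the factor $(1-q)^{-1}$. Your explicit check that $V_n$ sits in weight $2n$ in $V^{(2)}$, so that $E_2^\Gam\otimes L(1)$ is homogeneous of weight $0$, is bookkeeping the paper leaves implicit.
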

%% 
%%%%%%%%%%%%%%%%%%%%%%%%%%%%%%%%%%%%%%%%%%%%%%%%%%%%%%%%%%%%%%%%%%%%%%%%%%%%%%%%%%%%%%%%%
%% 
\begin{proof}
Immediate from Theorem \ref{thmPop} (resp.\ Theorem \ref{thmQiso} together with Proposition \ref{propE2exists}, which shows $\dim Q_{2k}(\Gam)$ $=$ $\sum_{i=0}^k\dim M_{2i}(\Gam)$, i.e., 
$\sum_k\dim Q_{2k}(\Gam)q^k=(1-q)^{-1}\sum_i\dim M_{2i}(\Gam)q^i$),   
exactly the same as in the derivation of \cite[Lemma 7.14]{BCFMN}. 
 The difference is that the rational generating function $1/((1-q^2)(1-q^4)(1-q^6))$ of 
$\Gam(1)$ for $\sum\dim M_{2i}(\Gam)q^i$ is replaced, for a general $\Gam$ with a cusp, by the  generally non-rational, but completely explicit via Proposition \ref{propRR} series $\sum_i\dim M_{2i}(\Gam)q^i$. 
\end{proof}
%% 
%%%%%%%%%%%%%%%%%%%%%%%%%%%%%%%%%%%%%%%%%%%%%%%%%%%%%%%%%%%%%%%%%%%%%%%%%%%%%%%%5
%% 
\section{The cocompact case: exact dimension formulas and a conjecture}
\label{seccocompact}
For $\Gam$ cocompact, Theorem \ref{thmatiyah} shows that $\Lam$, $P$, and Theorem \ref{thmPop} are simply not available since there is no generator $E_2^\Gam$ to build them from.
 It is therefore natural to ask what, if anything, survives.
 We show in this section that a great deal does.  
The conformal-degree filtration gives a bound 
(Theorem \ref{thmbound}, valid for every Fuchsian group).  
This bound is sharp at the top conformal degree for every cocompact $\Gam$, of any signature (Proposition \ref{proptopexact}), and for torsion-free $\Gam$ the bound is in fact an equality at every degree (Theorem \ref{thmexactdim}) which we conjecture persists for general signature (Conjecture \ref{conjexactdim}). 
%% 
%%%%%%%%%%%%%%%%%%%%%%%%%%%%%%%%%%%%%%%%%%%%%%%%%%%%%%%%%%%%%%%%%%%%%%%%%%%%%%%%%%%%5
%% 
\begin{theorem} 
\label{thmbound}
For every Fuchsian group $\Gam$, every VOA $V$, and every even $k\ge0$, 
$$\dim M_k(V,\Gam)\le\sum_{i=0}^{k/2}\dim M_{2i}(\Gam)\,\dim V_{k/2-i}.$$ 
\end{theorem}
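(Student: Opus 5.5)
We bound $\dim M_k(V,\Gam)$ using the conformal-degree filtration $F_NV$ and its associated graded pieces. The plan is to produce an injection of associated graded spaces into $\bigoplus_i M_{2i}(\Gam)\otimes V_{k/2-i}$. Fix even $k$ and set $F_NM_k(V,\Gam):=\{f\in M_k(V,\Gam): f(\HH)\subset F_NV\}$, so that $M_k(V,\Gam)=\bigcup_N F_NM_k(V,\Gam)$. We will show that $F_NM_k(V,\Gam)=0$ once $N>k/2$. It therefore suffices to bound each graded quotient $F_NM_k/F_{N-1}M_k$ by $\dim M_{k-2N}(\Gam)\dim V_N$ and sum over $N=0,\dots,k/2$; setting $i=k/2-N$ gives exactly the stated bound.

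\textbf{Step 1: top component.} For $f\in F_NM_k(V,\Gam)$, let $\pi_N f\colon\HH\to V_N$ be its image in $F_NV/F_{N-1}V\cong V_N$. Since $L(1)$ strictly lowers conformal degree, the operator $K(\gamma,\tau)^{-1}=j(\gamma,\tau)^{2L(0)}e^{cjL(1)}$ preserves the filtration. On the graded piece $V_N$ it acts by the scalar $j(\gamma,\tau)^{2N}$. Taking top components of $f\Vert_k\gamma=f$ therefore gives
\[
j(\gamma,\tau)^{-k+2N}(\pi_Nf)(\gamma\tau)=(\pi_Nf)(\tau).
\]
Choosing a basis of $V_N$, each coordinate of $\pi_Nf$ is then a weakly holomorphic scalar form of weight $k-2N$.

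\textbf{Step 2: local conditions.} Holomorphy at cusps is inherited directly, since the $q_\lambda$-expansion of $f$ lies in $F_NV[[q_\lambda]]$ and projection is coefficientwise. At an elliptic point $\tau_i$, condition (b) of Definition \ref{defVvaluedforms} projects to $\lambda_N\,\pi_Nf(\tau_i)=\pi_Nf(\tau_i)$, because $\hat K$ acts on $F_NV/F_{N-1}V$ by $\lambda_N=j(\gamma_i,\tau_i)^{k-2N}$ (see the proof of Proposition \ref{propellipticstructure}). This is precisely the scalar condition \eqref{eqellipticscalar} in weight $k-2N$. Hence $\pi_N f\in M_{k-2N}(\Gam)\otimes V_N$.

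\textbf{Step 3: conclusion.} The map $\pi_N$ is linear with kernel $F_{N-1}M_k(V,\Gam)$, which gives the injection of the graded quotients. For $N>k/2$ the target is $M_{k-2N}(\Gam)\otimes V_N=0$, since $M_w(\Gam)=0$ for $w<0$ by Proposition \ref{propRR}. So $\pi_N$ kills $F_NM_k$, and $F_NM_k=F_{N-1}M_k$ for all such $N$. Descending induction, using that each $f$ lies in some finite $F_NV$, shows $F_NM_k=F_{\lfloor k/2\rfloor}M_k$. Also $F_{-1}V=0$, so the filtration starts at zero. Summing the dimensions of the graded pieces yields the bound.

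The main point needing care is Step 2 at the cusps. There the bundle extension of Theorem \ref{thmbundle} uses the square-bracket formalism, and one must check that the top-degree projection of a holomorphic $q_\lambda$-expansion is still the ordinary holomorphic $q_\lambda$-expansion of a scalar form. I would handle this by noting that the cusp stabilizer $\pm T^h$ has $c=0$, so $K=\Id$ up to the harmless sign and no mixing occurs. Everything else follows from the filtration structure alone, which is why the bound holds for every $\Gam$, cocompact or not.
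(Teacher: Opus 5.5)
Your proposal is correct and is essentially the paper's proof. You filter by conformal degree and use that $K(\gamma,\tau)^{-1}=j(\gamma,\tau)^{2L(0)}e^{cjL(1)}$ preserves $F_\bullet V$ and acts on the top piece by $j(\gamma,\tau)^{2N}$, so the top component lies in $M_{k-2N}(\Gam)\otimes V_N$ with cusp and elliptic conditions inherited, giving $F_NM_k/F_{N-1}M_k\hookrightarrow M_{k-2N}(\Gam)\otimes V_N$, and then you sum. You are a bit more explicit than the paper in disposing of the levels $N>k/2$ via $M_w(\Gam)=0$ for $w<0$, though your opening sentence should say the graded quotient vanishes there, not $F_NM_k$ itself. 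Exactly like the paper (``using $V_N=0$ for $N<0$''), you tacitly use $F_{-1}V=0$ so that the sum starts at $N=0$; this is not automatic for an arbitrary VOA.
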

%% 
%%%%%%%%%%%%%%%%%%%%%%%%%%%%%%%%%%%%%%%%%%%%%%%%%%%%%%%%%%%%%%%%%%%%%%%%%%%%%%%%%%%%%%%%%%%
%% 
\begin{proof}
Let us filter $M_k(V,\Gam)$ by conformal degree, 
$F_NM_k(V,\Gam)=\{f:\HH\to F_NV\}\cap M_k(V,\Gam)$. 
For $f\in F_NM_k(V,\Gam)$ write $f=\sum_{i=0}^Nf_i$ with $f_i:\HH\to V_i$. 
 In the cocycle $K(\gamma,\tau)=e^{-cj(\gamma,\tau)L(1)}j(\gamma,\tau)^{-2L(0)}(\det\gamma)^{L(0)}$ the factor $e^{-cjL(1)}$ only lowers conformal degree.  
Thus, the degree-$N$ component of $K(\gamma,\tau)f(\tau)$ receives a contribution only from $f_N$, and equals $j(\gamma,\tau)^{-2N}(\det\gamma)^Nf_N(\tau)$. Here $e^{-cjL(1)}$ acting as the identity on the top graded piece it cannot lower further within $F_NV$.   
Comparing degree-$N$ components on both sides of $f(\gamma\tau)=j(\gamma,\tau)^kK(\gamma,\tau)f(\tau)$ therefore gives
$$f_N(\gamma\tau)=j(\gamma,\tau)^{k-2N}(\det\gamma)^Nf_N(\tau),\qquad\gamma\in\Gam,$$ 
i.e., $f_N\in M_{k-2N}(\Gam)\otimes V_N$, the elliptic-point and cusp conditions for $f_N$ being inherited directly from those for $f$, since $f_N$ is exactly the leading term in the sense of Definition \ref{defVvaluedforms}.  
 The resulting map $\pi_N\colon F_NM_k(V,\Gam)\to M_{k-2N}(\Gam)\otimes V_N$, $f\mapsto f_N$, is well defined, not only modulo lower-degree terms, but because the conformal-degree-lowering anomaly $e^{-cjL(1)}-\Id$ has no component raising or preserving degree. 
Thus, top coefficient $f_N$ do not have contributions from $f_{N-1}$, $\dots$, $f_0$. 
 This is the only point at which the structure of the cocycle is used.
 Its kernel is exactly $F_{N-1}M_k(V,\Gam)$ by construction, 
thus $F_NM_k(V,\Gam)/F_{N-1}M_k(V,\Gam)$ embeds into $M_{k-2N}(\Gam)\otimes V_N$. 
Summing the resulting inequalities $\dim(F_N/F_{N-1})\le\dim M_{k-2N}(\Gam)\dim V_N$ over $N=0$, $\dots$, $k/2$ by using $V_N=0$ for $N<0$, exhaustiveness of the conformal filtration,  and that $k$ even is exactly the condition needed for $k-2N=0$ to occur at $N=k/2$, matching the upper limit of the sum, that gives the stated bound.
\end{proof}
%% 
%%%%%%%%%%%%%%%%%%%%%%%%%%%%%%%%%%%%%%%%%%%%%%%%%%%%%%%%%%%%%%%%%%%%%%%%%%%%%%%%%%%%%%%%
%% 
\begin{corollary}
\label{corequalitycusp}
Equality holds in Theorem \ref{thmbound} whenever $\Gam$ has a cusp (Corollary \ref{cordimformula}). 
For $\Gam$ cocompact of genus $g\ge 2$, the maps $\pi_N$ of the proof can fail to be surjective, and the inequality can be strict. 
 Already at the level of scalar quasi-automorphic forms, Theorem \ref{thmatiyah} shows that the pair $(1,v)\in M_0(\Gam)\otimes V_1=\CC\otimes V_1$ fails to lift to an element of $M_2(V,\Gam)$ with leading term $v$ whenever $v\in V_1$ is not quasi-primary 
(cf. \cite[Example 5.1]{BCFMN}: 
a constant section $v$ alone defines an element of $Q_{2}(V,\Gam)$, not of $M_2(V,\Gam)$, exactly when $L(1)v\ne0$, and Theorem \ref{thmatiyah} shows there is no weight-$2$ correction term available to repair this for cocompact $\Gam$).
\end{corollary}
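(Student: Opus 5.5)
The corollary has two parts. The first is the equality in the cusped case, which is bookkeeping. The second is the possible strictness for cocompact $\Gam$ of genus $g\ge 2$, which I will prove by exhibiting one explicit failure of surjectivity of $\pi_1$ at weight $k=2$.

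\emph{Cusped case.} When $\Gam$ has a cusp, Corollary \ref{cordimformula} gives $\dim M_k(V,\Gam)=\sum_i\dim M_{2i}(\Gam)\dim V_{k/2-i}$. This is exactly the right-hand side of Theorem \ref{thmbound}, so equality holds. As a consequence, in the filtration argument of Theorem \ref{thmbound} every map $\pi_N$ must be surjective, since each graded piece $F_N/F_{N-1}$ embeds into $M_{k-2N}(\Gam)\otimes V_N$ and the dimensions add up. One can also see the lift directly: $P$ sends $h\otimes v$, with $h\in M_{k-2N}(\Gam)$, to $hv+(\text{lower-degree terms involving }E_2^\Gam)$.

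\emph{Cocompact case, setup.} Take $k=2$ and $N=1$, and let $v\in V_1$ with $L(1)v\ne0$. Suppose $v$ had a lift $f=v+f_0\in M_2(V,\Gam)$ with top component $f_1\equiv v$, where $f_0\colon\HH\to V_0$. I will expand the transformation law $f(\gamma\tau)=j^2K(\gamma,\tau)f(\tau)$ in degree $0$. The factor $e^{-cjL(1)}$ contributes $-cj\,L(1)v$, and $j^{-2L(0)}$ acts on $v$ by $j^{-2}$. This gives
$$f_0(\gamma\tau)=j^2f_0(\tau)-cj\,L(1)v.$$
Now pair with a linear functional $\lambda$ on $V_0$ satisfying $\lambda(L(1)v)=1$. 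Then $\phi:=\lambda\circ f_0$ satisfies $\phi|_2\gamma=\phi-X(\gamma,\tau)$, so $-\phi$ is a holomorphic function with exactly the anomaly excluded by Theorem \ref{thmatiyah}, with $c=1$. The holomorphy conditions needed there are automatic, because $\Gam$ is cocompact and $f$ is holomorphic on $\HH$. This is the desired contradiction, so $(1,v)\notin\mathrm{im}\,\pi_1$.

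\emph{Cocompact case, strictness.} When such a $v$ exists, the graded piece $F_1M_2/F_0M_2$ has dimension at most $\dim V_1-1$. Since $\dim M_0(\Gam)=1$, the bound of Theorem \ref{thmbound} is strict. This justifies the word ``can'': strictness occurs whenever $V_1\not\subset\ker L(1)$, for instance for Heisenberg-type VOAs with a nonstandard conformal vector. The converse remark from \cite[Example 5.1]{BCFMN} is immediate: if $L(1)v=0$, then the constant section $v$ already lies in $M_2(V,\Gam)$.

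\emph{Main obstacle.} The delicate step is the sign and normalization bookkeeping in the degree-$0$ component, together with the role of $(\det\gamma)$. For $\gamma\in\SL_2(\RR)$ the factor $\det\gamma=1$, and the sign ambiguity $\pm\gamma$ is harmless because $j^2$ and $cj$ are both invariant under $\gamma\mapsto-\gamma$. I also need the elliptic-point condition (b) not to interfere. It does not: the contradiction uses only the transformation law, and Theorem \ref{thmatiyah} already covers elliptic points through Lemma \ref{lemselberg}.
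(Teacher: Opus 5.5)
Your proposal is correct and follows the paper's route. The cusped equality is read off from Corollary \ref{cordimformula}, and for cocompact $\Gam$ strictness comes from Theorem \ref{thmatiyah}, which rules out a weight-$2$ correction term for a non-quasi-primary constant section; your explicit degree-$0$ law $f_0(\gamma\tau)=j^2f_0(\tau)-cj\,L(1)v$, paired with a functional $\lambda$ satisfying $\lambda(L(1)v)=1$, is precisely the detail the paper leaves implicit when it says no such correction term is available.
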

%%
%%%%%%%%%%%%%%%%%%%%%%%%%%%%%%%%%%%%%%%%%%%%%%%%%%%%%%%%%%%%%%%%%%%%%%%%%%%%%%%%%%%%%%%%
%% 
The remainder of this section sharpens Corollary \ref{corequalitycusp} into an exact formula. Recall from \cite[\S7.5]{BCFMN} the notation $QP_n(V):=\ker\big(L(1)\colon V_n\to V_{n-1}\big)$ for the space of quasi-primary states of conformal weight $n$.
%% 
%%%%%%%%%%%%%%%%%%%%%%%%%%%%%%%%%%%%%%%%%%%%%%%%%%%%%%%%%%%%%%%%%%%%%%%%%%%%%%%%%%%%%%%%%%%%
%% 
\begin{proposition}
[the top-level deficiency is exactly the failure of quasi-primarity, any signature]
\label{proptopexact}
Let $\Gam$ be any cocompact Fuchsian group of any genus $g\ge 0$, any signature, 
 and $V$ any QVOA. 
For $k=2n\ge 2$, the map $\pi_n\colon M_k(V,\Gam)\to M_0(\Gam)\otimes V_n=V_n$ of Theorem \ref{thmbound} has image exactly
$$\mathrm{im}(\pi_n)=QP_n(V).$$ 
\end{proposition}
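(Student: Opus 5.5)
The plan is to prove the two inclusions separately. The inclusion $QP_n(V)\subseteq\mathrm{im}(\pi_n)$ comes from constant sections. The reverse inclusion comes from looking at the next-to-top conformal degree and invoking the Atiyah obstruction.

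\emph{Step 1 ($\supseteq$).} Take $v\in QP_n(V)$ and the constant section $f\equiv v$, with values in $V_n\subset F_nV$. Since $L(1)v=0$, the unipotent factor $e^{-cjL(1)}$ acts trivially on $v$. Hence $K(\gamma,\tau)v=j(\gamma,\tau)^{-2n}v$, and $j(\gamma,\tau)^{k}K(\gamma,\tau)v=v$ for $k=2n$ and every $\gamma\in\Gam$. This gives $f\Vert_k\gamma=f$ for all $\gamma\in\Gam$. Because the same identity holds at $\tau=\tau_i$ for each elliptic generator $\gamma_i$, condition (b) of Definition \ref{defVvaluedforms} holds automatically; this is the extreme case $N=n_0=n$ discussed in Remark \ref{rmkellipticgeneric}. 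Since $\Gam$ is cocompact, there is no cusp condition. Thus $f\in M_k(V,\Gam)$ and $\pi_n(f)=v$.

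\emph{Step 2 ($\subseteq$).} Let $f\in M_{2n}(V,\Gam)$, which lies in $F_nV$ since $k=2n$ forces $N\le n$ in Theorem \ref{thmbound}. Its top component is $f_n\in M_0(\Gam)\otimes V_n=V_n$, i.e.\ a constant $v$. Compare the degree-$(n-1)$ components of $f(\gamma\tau)=j^kK(\gamma,\tau)f(\tau)$. Only two terms contribute: the term $j^{k-2(n-1)}f_{n-1}(\tau)$, and the first-order term of $e^{-cjL(1)}$ applied to $j^{-2n}v$. This yields
$$f_{n-1}(\gamma\tau)=j(\gamma,\tau)^2f_{n-1}(\tau)-c\,j(\gamma,\tau)\,L(1)v,\qquad\text{i.e.}\qquad f_{n-1}|_2\gamma=f_{n-1}-X(\gamma,\tau)\,L(1)v .$$
Suppose $L(1)v\neq0$. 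Choose a linear functional $\lambda$ on $V_{n-1}$ with $\lambda(L(1)v)=1$. Then $h:=\lambda\circ f_{n-1}$ is a holomorphic function on $\HH$ satisfying $h|_2\gamma=h-X(\gamma,\tau)$ for all $\gamma\in\Gam$. This is exactly a generator of the type excluded in Theorem \ref{thmatiyah}, with $c=-1$.

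\emph{Step 3 (the main obstacle: small genus).} Theorem \ref{thmatiyah} is stated only for $g\ge2$, whereas the proposition allows any genus. To cover $g\le 1$, I would reuse the Selberg reduction from the proof of Theorem \ref{thmatiyah}, with the hypothesis $g\ge2$ replaced by hyperbolicity. A cocompact Fuchsian group of the first kind has finite positive covolume, so $\chi^{\mathrm{orb}}(\Gam)<0$ by Gauss--Bonnet, even when $g\in\{0,1\}$. By Lemma \ref{lemselberg}, a torsion-free finite-index subgroup $\Gam'$ then has $2-2g'=d\,\chi^{\mathrm{orb}}(\Gam)<0$, hence $g'\ge2$. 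The function $h$ still satisfies the anomalous transformation law for all $\gamma\in\Gam'$. The torsion-free case of Theorem \ref{thmatiyah} on $X'=\Gam'\backslash\HH$ therefore gives a contradiction, so $L(1)v=0$ and $\mathrm{im}(\pi_n)\subseteq QP_n(V)$. The remaining care is bookkeeping: the signs and powers of $j$ in the degree-$(n-1)$ comparison, and checking that $\det\gamma=1$ for the lifts used.
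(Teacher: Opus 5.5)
Your proof is correct. Step~1 is the same as the paper's argument (constant quasi-primary sections, via the computation \eqref{eqexamplebcfmn}). The reverse inclusion, however, goes differently.

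The paper expands $f_{n-1}=\sum_i h_iw_i$ over a basis $h_1,\dots,h_d$ of $M_2(\Gam)$, reads off a coefficient of $X(\gamma,\tau)$, and finishes with the graded-ring remark $M_2(\Gam)\cap M_0(\Gam)=0$. You instead extract from the cocycle the inhomogeneous law $f_{n-1}|_2\gamma=f_{n-1}-X(\gamma,\tau)L(1)v$, and turn $L(1)v\neq0$ into a scalar depth-$1$ generator, which Theorem~\ref{thmatiyah} excludes.

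Your route makes the essential input visible. By your transformation law, requiring $f_{n-1}$ to have coefficients in $M_2(\Gam)$ is already equivalent to $L(1)v=0$. So the paper's expansion tacitly rests on $f_{n-1}\in Q_2(\Gam)\otimes V_{n-1}$ (Theorem~\ref{thmQiso}) together with $Q_2(\Gam)=M_2(\Gam)$. That is the same Atiyah obstruction, which the paper's proof never invokes explicitly. Granting the expansion, the degree-$(n-1)$ component gives $X(\gamma,\tau)L(1)v=0$ at once. The coefficient $L(1)v+\sum_ih_iL(1)w_i$ written in the paper mixes degrees, since $L(1)w_i\in V_{n-2}$.

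Your Step~3 is also a real addition. Theorem~\ref{thmatiyah} and Lemma~\ref{lemselberg} are stated only for $g\ge2$, whereas the proposition claims every cocompact signature. Your observation that $\chi^{\mathrm{orb}}(\Gam)<0$ for every cocompact $\Gam$ is exactly what covers the full stated range: for $g\in\{0,1\}$ it forces elliptic points, and the Selberg reduction still yields $g'\ge2$.

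The paper's bookkeeping buys only uniformity with the $Q(\Gam)\otimes V^{(2)}$ formalism. Logically both arguments rest on a single application of Atiyah's theorem, which you make directly.
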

%% 
%%%%%%%%%%%%%%%%%%%%%%%%%%%%%%%%%%%%%%%%%%%%%%%%%%%%%%%%%%%%%%%%%%%%%%%%%%%%%%%%%%%%%%%5
%% 
\begin{proof}
1.)\;$QP_n(V)\subseteq\mathrm{im}(\pi_n)$.
 If $v\in V_n$ is quasi-primary, the constant section $f\equiv v$ satisfies, for every  
$\gamma\in\Gam$, elliptic or not, since the identity below is the computation of 
\cite[Example 5.1]{BCFMN}, valid for any $\gamma\in\GL_2(\RR)^+$ acting via 
$K(\gamma,\tau)$ as in Theorem \ref{thmbundle}, 
\begin{equation}
\label{eqexamplebcfmn}
f\Vert_{2n}\gamma(\tau)=\sum_{r\ge 0}\frac{1}{r!}X(\gamma,\tau)^rL(1)^rv=v=f(\tau),
\end{equation} 
the sum collapsing to its $r=0$ term because $L(1)v=0$.
 Thus, $f\equiv v\in M_{2n}(V,\Gam)$ identically, with no completion needed.  
Condition (a) of Definition \ref{defVvaluedforms} is vacuous since $\Gam$ has no cusps, and condition (b) at every elliptic point $\tau_i$ is the special case $\gamma=\gamma_i$ of 
\eqref{eqexamplebcfmn}. Hence $\pi_n(f)=v$.

2.)\;$\mathrm{im}(\pi_n)\subseteq QP_n(V)$.
 Suppose $v\in V_n$ and some $f\in M_{2n}(V,\Gam)$ has $\pi_n(f)=v$. 
 Let us write $f=v+f_{n-1}+\cdots+f_0$, $f_j:\HH\to V_j$, and $f_{n-1}=\sum_{i=1}^dh_iw_i$ for a basis $h_1$, $\dots$, $h_d$ of $M_2(\Gam)$ where  
$d=\dim M_2(\Gam)$ (the sum being empty if $d=0$) and $w_i\in V_{n-1}$. 
The proof of Theorem \ref{thmQiso} (the $r=0$ depth case of that computation, specialized to $h\in M_2(\Gam)$) gives, for a pure tensor $hw$, $h\in M_2(\Gam)$, $w\in V_{n-1}$,
$$(hw)\Vert_{2n}\gamma(\tau)=\sum_{p\ge 0}\frac{1}{p!}X(\gamma,\tau)^ph(\tau)L(1)^pw.$$ 
Combining this with \eqref{eqexamplebcfmn} and linearity, the $V_{n-1}$-valued coefficient of $X(\gamma,\tau)^1$ in $f\Vert_{2n}\gamma(\tau)$ equals $L(1)v+\sum_ih_i(\tau)L(1)w_i$. 
Note that the lower pieces $f_{n-2}$, $\dots$, $f_0$ contribute only to coefficients of $X^2$, $X^3$, $\dots$ valued in $V_{n-2}$, $V_{n-3}$, $\dots$
 by the same formula applied one degree lower, thus they cannot affect this $V_{n-1}$-valued coefficient.  
 Since $f\Vert_{2n}\gamma=f$ identically with $f$ having zero coefficient of $X^1$, being independent of $\gamma$ entirely, this coefficient must vanish for every $\tau\in\HH$, thus
$$L(1)v+\sum_{i=1}^dh_i(\tau)L(1)w_i=0,\qquad\tau\in\HH.$$
Apply an arbitrary linear functional $\phi\in V_{n-1}^*$. 
The scalar function $\sum_i\phi(L(1)w_i)h_i(\tau)$, a $\CC$-linear combination of weight-$2$ forms, hence itself an element of $M_2(\Gam)$, equals the constant $-\phi(L(1)v)\in M_0(\Gam)$ for every $\tau$. 
 As $M_2(\Gam)\cap M_0(\Gam)=0$ inside the graded ring $M(\Gam)=\bigoplus_iM_{2i}(\Gam)$,  where different graded pieces of a graded vector space meet only in $0$, both sides vanish, i.e., $\phi(L(1)v)=0$. As $\phi\in V_{n-1}^*$ was arbitrary, $L(1)v=0$, i.e., $v\in QP_n(V)$.
\end{proof}
%% 
%%%%%%%%%%%%%%%%%%%%%%%%%%%%%%%%%%%%%%%%%%%%%%%%%%%%%%%%%%%%%%%%%%%%%%%%%%%%%%%%%%%%%%%%%%%%
%% 
\begin{theorem}
[exact dimension formula, torsion-free cocompact case]
\label{thmexactdim}
Let $\Gam$ be torsion-free and cocompact of genus $g\ge 2$, and $V$ any CFT-type VOA. Then for every even $k=2n\ge 0$,
$$\dim M_k(V,\Gam)=\sum_{j=0}^{n-1}\dim M_{k-2j}(\Gam)\,\dim V_j+\dim QP_n(V).$$
\end{theorem}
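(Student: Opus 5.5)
The plan is to use the conformal-degree filtration from the proof of Theorem \ref{thmbound} and show that each graded piece has the expected dimension. By that proof, for every $N$ we have an injection
$$F_NM_k(V,\Gam)/F_{N-1}M_k(V,\Gam)\hookrightarrow M_{k-2N}(\Gam)\otimes V_N$$
induced by $\pi_N$. For $N>n$ the target is $0$, since negative weights vanish by Proposition \ref{propRR}. For $N=n$ the image is exactly $QP_n(V)$ by Proposition \ref{proptopexact}; this already covers $k=0$, where $QP_0(V)=V_0$. So the dimension formula follows by summing over $N$ once we prove one claim: for every $0\le N\le n-1$, $\pi_N$ is surjective onto $M_{k-2N}(\Gam)\otimes V_N$. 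By linearity it suffices to lift each pure tensor $h\otimes v$ with $h\in M_w(\Gam)$, $w:=k-2N\ge 2$, $v\in V_N$. Since $\Gam$ is torsion-free and cocompact, conditions (a) and (b) of Definition \ref{defVvaluedforms} are vacuous, so only the transformation law $f\Vert_k\gamma=f$ has to be checked.

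To build the lift without $E_2^\Gam$, I would replace $E_2$-corrections by derivatives of $h$, in the spirit of Cohen--Kuznetsov series. Concretely, I would try the ansatz
$$f:=\sum_{p=0}^{N}a_p\,(\theta^ph)\,L(1)^pv,\qquad a_0=1,$$
where $L(1)^pv\in V_{N-p}$, so $f$ takes values in $F_NV$ with top component $hv$. Two ingredients control $f\Vert_k\gamma$. First, the pure-tensor formula used in part 2 of the proof of Proposition \ref{proptopexact}: for a scalar quasi-automorphic $g$ of weight $k-2m$ and $w'\in V_m$, the operator $K^{-1}$ contributes $\sum_q\frac1{q!}X^qL(1)^qw'$. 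Second, Lemma \ref{lemthetadepth}, iterated on $h$, which gives the explicit quasi-coefficients
$$Q_m(\theta^ph)=\binom pm\frac{(w+p-m)_m}{(2\pi i)^m}\,\theta^{p-m}h,$$
where $(x)_m$ is the rising factorial. I would check this closed form by induction from \eqref{eqthetarecursion}, tracking the combinatorics precisely there.

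Collecting the coefficient of $X^m\,\theta^{s}h\,L(1)^{s+m}v$ in $f\Vert_k\gamma$ then gives, for each $m\ge1$, a linear recursion determining $a_{p}$ from $a_{p-1}$. Its solution should be
$$a_p=\frac{(-2\pi i)^p}{p!\,(w)_p},$$
which is the classical Cohen--Kuznetsov normalization up to the convention for $\theta$. All denominators are nonzero precisely because $w\ge 2>0$. This is the key point: at $N=n$ one has $w=0$, the recursion breaks down, and this breakdown is the quasi-primarity obstruction of Proposition \ref{proptopexact}.

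The main obstacle is this combinatorial verification: that the single sequence $a_p$ kills all coefficients of $X^m$ for $m\ge1$ simultaneously, and not just the $m=1$ coefficients. I would first derive the recursion from the $m=1$ coefficient. I would then show that the $m\ge 2$ conditions follow, by recognizing $f$ as the $L(1)$-twisted analogue of the Cohen--Kuznetsov generating series $\sum_p a_p\theta^ph\,Z^p$, whose automorphy is classical. Alternatively, I would verify directly that the resulting coefficient is a Vandermonde-type identity in rising factorials. Once surjectivity of each $\pi_N$ for $N<n$ is established, the exact sequences of the filtration sum to
$$\dim M_k(V,\Gam)=\sum_{j=0}^{n-1}\dim M_{k-2j}(\Gam)\dim V_j+\dim QP_n(V).$$
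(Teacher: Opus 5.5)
Your proposal is correct, but it proves the key step, surjectivity of $\pi_N$ for $N<n$, by a different route from the paper. The top level is handled identically in both, by Proposition \ref{proptopexact}.

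The paper argues cohomologically. It regards $M_k(V,\Gam)$ as $H^0$ of a filtered vector bundle $\mathcal F$ on $X$ with graded pieces $\mathcal Q_j=K_X^{\otimes(n-j)}\otimes V_j$. Serre duality gives $H^1(X,K_X^{\otimes m})=0$ for $m\ge2$. Induction then gives $H^1(F_j\mathcal F)=0$ for $j\le n-2$, so the connecting maps out of $H^0(\mathcal Q_j)$ vanish for $j\le n-1$.

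You instead write down an explicit Cohen--Kuznetsov lift. The verification you flag as the main obstacle is immediate. Since $(w)_{s+r}=(w)_s(w+s)_r$, the coefficient of $X^m\,\theta^sh\,L(1)^{s+m}v$ in $f\Vert_k\gamma$ is
\[
\sum_{r=0}^m a_{s+r}\binom{s+r}{r}\frac{(w+s)_r}{(2\pi i)^r\,(m-r)!}=\frac{(-2\pi i)^s}{s!\,(w)_s}\cdot\frac{(1-1)^m}{m!},
\]
which vanishes for all $m\ge1$ at once. So no separate $m=1$ recursion or Vandermonde argument is needed.

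The paper's route is short given the sheaf formalism, but it rests on a vanishing theorem on a smooth compact curve, hence on $r=0$ and $g\ge2$. Yours is elementary. It gives an explicit splitting of the conformal filtration below the top degree, a cocompact substitute for $P$ built from $\theta$ instead of the nonexistent $E_2^\Gam$. It also uses neither torsion-freeness nor the genus. The computation needs only $h|_w\gamma=h$ and Lemma \ref{lemthetadepth}, and condition (b) of Definition \ref{defVvaluedforms} is just $f\Vert_k\gamma_i=f$ evaluated at $\tau=\tau_i$, so it holds automatically.

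Combined with Proposition \ref{proptopexact}, which the paper states for every signature, your argument would therefore prove Conjecture \ref{conjexactdim} outright. It bypasses the orbifold vanishing theorem that Remark \ref{rmktowardsconj} singles out as the missing ingredient.
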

%%  
%%%%%%%%%%%%%%%%%%%%%%%%%%%%%%%%%%%%%%%%%%%%%%%%%%%%%%%%%%%%%%%%%%%%%%%%%%%%%%%%%%%%%%%%
%% 
\begin{proof}
Let $\mathcal F$ be the rank-$\dim F_nV$ holomorphic vector bundle on $X$ whose sheaf of sections is $F_nM_k(V,\Gam)=M_k(V,\Gam)$, filtered by conformal degree with $F_j\mathcal F/F_{j-1}\mathcal F\cong K_X^{\otimes(n-j)}\otimes V_j=:\mathcal Q_j$, using that weight-$2(n-j)$ scalar automorphic forms for torsion-free cocompact $\Gam$ are exactly $H^0(X,K_X^{\otimes(n-j)})$ (Proposition \ref{propRR}).  
 This is the sheaf-theoretic reformulation of the filtration used in the proof of Theorem \ref{thmbound}, with $H^0(F_j\mathcal F)=F_jM_k(V,\Gam)$ and $H^0(\mathcal Q_j)=M_{k-2j}(\Gam)\otimes V_j$.  
By Serre duality, $H^1(X,K_X^{\otimes m})\cong H^0(X,K_X^{\otimes(1-m)})^*$,
 which vanishes for $m\ge 2$ since $\deg K_X^{\otimes(1-m)}=(1-m)(2g-2)<0$ for $g\ge 2$. 
 Hence $H^1(\mathcal Q_j)=0$ for every $j\le n-2$, where $n-j\ge 2$. 
Inducting on $j$ via the long  exact cohomology sequence of $0\to F_{j-1}\mathcal F\to F_j\mathcal F\to\mathcal Q_j\to 0$, whose $H^2$ terms vanish automatically, $X$ being a curve, gives $H^1(F_j\mathcal F)=0$ for every $j\le n-2$.  
The base case $H^1(F_0\mathcal F)=H^1(\mathcal Q_0)=0$ holds since $n\ge 2$ 
(assuming $n\ge 2$; the cases $n=0$, $1$ are immediate separately), 
and the inductive step uses 
$0=H^1(F_{j-1}\mathcal F)\to H^1(F_j\mathcal F)\to H^1(\mathcal Q_j)=0$.
 Consequently each connecting map $H^0(\mathcal Q_j)\to H^1(F_{j-1}\mathcal F)=0$ vanishes for $1\le j\le n-1$, thus $\pi_j\colon H^0(F_j\mathcal F)\to H^0(\mathcal Q_j)$ is surjective for every $j\le n-1$.  
The bound of Theorem \ref{thmbound} is an equality at every level below the top. Combined with Proposition \ref{proptopexact}, which identifies the remaining top-degree, $j=n$, deficiency exactly as $\dim V_n-\dim QP_n(V)$, this gives the stated formula.
\end{proof}
%% 
%%%%%%%%%%%%%%%%%%%%%%%%%%%%%%%%%%%%%%%%%%%%%%%%%%%%%%%%%%%%%%%%%%%%%%%%%%%%%%%%%%%%%%%%%%%%%%%
%% 
This is a complete resolution, for torsion-free cocompact $\Gam$, of the question left open after Corollary \ref{corequalitycusp}.   
The bound of Theorem \ref{thmbound} fails to be exact  only at the very top conformal degree, and there the deficiency is exactly $\dim V_n-\dim QP_n(V)=\mathrm{rank}\big(L(1)|_{V_n\to V_{n-1}}\big)$. 
As a check, for $V=S$ the Heisenberg VOA at $k=4$, $n=2$, 
$QP_2(S)=\CC\cdot h(-1)^2\mathbf1$ is $1$-dimensional inside the $2$-dimensional $S_2$, 
cf. Example \ref{excompact} below, recovering $\dim M_4(S,\Gam)=3(g-1)+g+1=4g-2$ exactly. 
%% 
%%%%%%%%%%%%%%%%%%%%%%%%%%%%%%%%%%%%%%%%%%%%%%%%%%%%%%%%%%%%%%%%%%%%%%%%%%%%%%%%%%%%%%
%% 
\begin{conjecture} 
[exact dimension formula, general signature]
\label{conjexactdim}
Theorem \ref{thmexactdim} holds analogously for $\Gam$ cocompact of any signature 
$(g;m_1,\dots,m_r;0)$ with $\dim M_{2i}(\Gam)$ now given by the full Riemann-Roch formula of Proposition \ref{propRR} (elliptic corrections included)
$$\dim M_k(V,\Gam)=\sum_{j=0}^{n-1}\dim M_{k-2j}(\Gam)\,\dim V_j+\dim QP_n(V),
\qquad k=2n. $$
\end{conjecture}
%%
%%%%%%%%%%%%%%%%%%%%%%%%%%%%%%%%%%%%%%%%%%%%%%%%%%%%%%%%%%%%%%%%%%%%%%%%%%%%%%%%%%%%%%%
%% 
We can prove half of Conjecture \ref{conjexactdim}, in full signature generality, leaving exactly one further ingredient open.
%% 
%%%%%%%%%%%%%%%%%%%%%%%%%%%%%%%%%%%%%%%%%%%%%%%%%%%%%%%%%%%%%%%%%%%%%%%%%%%%%%%%%%%%%%%
%% 
\begin{proposition}
[partial progress on Conjecture \ref{conjexactdim}]
\label{proppartialconj}
For $\Gam$ cocompact of any signature, the inequality of Theorem \ref{thmbound} may be sharpened, with no further hypothesis, to 
$$\dim M_k(V,\Gam)\le\sum_{j=0}^{n-1}\dim M_{k-2j}(\Gam)\,\dim V_j+\dim QP_n(V), 
\qquad k=2n,
$$
and the contribution of the top level $j=n$ to this bound is exact,  
not just an upper bound (Proposition \ref{proptopexact} identifies $\mathrm{im}(\pi_n)$ 
 not just its dimension).  
Conjecture \ref{conjexactdim} is exactly the assertion that the remaining inequality, coming from levels $j<n$, is also always an equality, i.e., that $\pi_j$ is surjective onto the full $M_{k-2j}(\Gam)\otimes V_j$ for every $j<n$, as in the torsion-free case.
\end{proposition}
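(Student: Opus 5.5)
The plan is to rerun the filtration argument of Theorem \ref{thmbound} and replace only its top-level estimate by the exact description from Proposition \ref{proptopexact}. For $f\in M_k(V,\Gam)$ with $k=2n$, first note that $f$ takes values in $F_nV$. The degree-$N$ component of $f$, if $N>n$, lies in $M_{k-2N}(\Gam)\otimes V_N$ by the comparison in the proof of Theorem \ref{thmbound}, and this space vanishes because negative weights carry no forms (Proposition \ref{propRR}). So the filtration $F_0M_k(V,\Gam)\subset\cdots\subset F_nM_k(V,\Gam)=M_k(V,\Gam)$ is exhaustive and stops at level $n$. Consequently
$$\dim M_k(V,\Gam)=\sum_{j=0}^{n}\dim\big(F_jM_k(V,\Gam)/F_{j-1}M_k(V,\Gam)\big).$$

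Next I would bound the graded pieces level by level. For $j<n$, the map $\pi_j$ of Theorem \ref{thmbound} has kernel $F_{j-1}M_k(V,\Gam)$ and lands in $M_{k-2j}(\Gam)\otimes V_j$. This gives the same inequalities $\dim(F_j/F_{j-1})\le\dim M_{k-2j}(\Gam)\dim V_j$ as before, with no new input. At the top level $j=n$ the target is $M_0(\Gam)\otimes V_n=V_n$, and Proposition \ref{proptopexact} identifies $\mathrm{im}(\pi_n)=QP_n(V)$ exactly, for any cocompact signature. Hence $\dim(F_n/F_{n-1})=\dim QP_n(V)$ is an equality, and summing gives the sharpened bound. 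The degenerate case $n=0$ should be checked separately: there $QP_0(V)=V_0$, since $L(1)V_0\subset V_{-1}=0$, so the formula reduces to the original bound and nothing new is claimed.

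The final assertion, that Conjecture \ref{conjexactdim} is equivalent to surjectivity of every $\pi_j$ with $j<n$, follows from the same sum. Equality in the sharpened bound holds if and only if each inequality $\dim\mathrm{im}(\pi_j)\le\dim M_{k-2j}(\Gam)\dim V_j$ for $j<n$ is an equality, that is, if and only if each such $\pi_j$ is surjective. The top term is already exact.

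The one point that needs care is applying Proposition \ref{proptopexact} as stated to the quotient $F_n/F_{n-1}$. Here $\pi_n$ is defined on all of $M_k(V,\Gam)=F_nM_k(V,\Gam)$, and its kernel is exactly $F_{n-1}M_k(V,\Gam)$, so its image is isomorphic to the quotient and the identification is legitimate. I would also confirm that Proposition \ref{proptopexact} needs no resonance hypothesis at elliptic points, which Remark \ref{rmkellipticgeneric} asserts. No hypothesis beyond cocompactness enters anywhere: no CFT-type assumption and no torsion-freeness, and these were used only in Theorem \ref{thmexactdim} to get surjectivity below the top.
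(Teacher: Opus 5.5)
Your proposal is correct and follows the paper's route: the paper's proof is the one-line observation that the sharpened bound follows immediately from Theorem \ref{thmbound} together with Proposition \ref{proptopexact}, neither of which uses any hypothesis on the signature. Your version spells out what the paper leaves implicit: the filtration stops at level $n$ because negative-weight forms vanish; $\ker\pi_n=F_{n-1}M_k(V,\Gam)$, so $\dim(F_n/F_{n-1})=\dim QP_n(V)$; and equality in the summed bound holds if and only if every $\pi_j$ with $j<n$ is surjective.
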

%% 
%%%%%%%%%%%%%%%%%%%%%%%%%%%%%%%%%%%%%%%%%%%%%%%%%%%%%%%%%%%%%%%%%%%%%%%%%%%%
%% 
\begin{proof}
It is immediate from Theorem \ref{thmbound} and Proposition \ref{proptopexact} whose proofs 
do not require any condition on the signature of $\Gam$. 
\end{proof}
%% 
%%%%%%%%%%%%%%%%%%%%%%%%%%%%%%%%%%%%%%%%%%%%%%%%%%%%%%%%%%%%%%%%%%%%%%%%%%%%%
%% 
\begin{remark}
[towards a proof of Conjecture \ref{conjexactdim}]
\label{rmktowardsconj}
The proof of Theorem \ref{thmexactdim} shows that, for $r=0$, surjectivity at levels $j<n$ follows from $H^1(X,K_X^{\otimes m})=0$ for $m\ge 2$, a one-line consequence of Serre duality and Riemann-Roch on the coarse curve $X$.  
For general signature, the bundles $\mathcal Q_j$, $j<n$,
 become orbifold (parabolic) line bundles, of orbifold degree computed by the full formula of Proposition \ref{propRR}. 
The expected mechanism for Conjecture \ref{conjexactdim} is an orbifold Serre duality and Riemann-Roch theorem (as developed for parabolic bundles on orbifold curves in the literature, e.g., building on \cite{Gunning} together with the standard parabolic-bundle formalism)
 giving the analogous vanishing $H^1(\mathcal Q_j)=0$ once the orbifold degree of 
$\mathcal Q_j$ exceeds $2g-2$, which we expect, but have not verified, to hold for all $j\le n-2$ once $k$ is large enough relative to $m_1$, $\dots$, $m_r$.  
 A complete proof would require: (i) setting up the sheaf $\mathcal F$ on the orbifold $X$ precisely, so that condition (b) of Definition \ref{defVvaluedforms} is correctly encoded at each elliptic point compatibly with Proposition \ref{propellipticstructure}; and (ii) the orbifold vanishing theorem itself.
 We do not currently have a proof for (ii) beyond citing the general parabolic-bundle literature.  We record it, together with the resonant case of Proposition \ref{propellipticstructure}, as the most concrete open problem posed by this paper for the wider community, see \S\ref{secconclusion}. 
\end{remark}
%% 
%%%%%%%%%%%%%%%%%%%%%%%%%%%%%%%%%%%%%%%%%%%%%%%%%%%%%%%%%%%%%%%%%%%%%%%%%%%%%%%%%%%%%%
%% 
Although the holomorphic generator $E_2^\Gam$ is not available for cocompact $\Gam$, the Maass-Shimura operator $\delta_k$ of Remark \ref{rmkmaassshimura} is, and it can be used to  define a real-analytic substitute for $Q(V,\Gam)$.
 A \emph{nearly holomorphic $V$-valued automorphic form} of weight $k$ and depth $\le s$ is a real-analytic $F\colon\HH\to F_NV$ of the form $F(\tau)=\sum_{m=0}^s(4\pi\Im\tau)^{-m}f_m(\tau)$, $f_m$ holomorphic, with $F(\gamma\tau)=j(\gamma,\tau)^kK(\gamma,\tau)F(\tau)$ for all $\gamma\in\Gam$ (exactly, with no further anomaly, since the real-analytic completion absorbs it). 
 The operators $\delta_k\otimes\mathrm{id}+\mathrm{id}\otimes L(-1)$ and the corresponding lowering operator, built from $\tfrac1{4\pi\Im\tau}$ rather than from a holomorphic generator,  act on the resulting space $\widetilde Q(V,\Gam)$ exactly as in 
\S\ref{seclowering}-\S\ref{secPop} 
with no existence obstruction since they are built from the always-available smooth hyperbolic metric rather than from a holomorphic connection. 
We do not consider the resulting real-analytic structure theory of $\widetilde Q(V,\Gam)$ in detail here. Doing so would give an independent, real-analytic route to Conjecture \ref{conjexactdim}.
%%
%%%%%%%%%%%%%%%%%%%%%%%%%%%%%%%%%%%%%%%%%%%%%%%%%%%%%%%%%%%%%%%%%%%%%%%%%%
%% 
\section{Correspondences and Hecke-type operators}\label{sechecke}
The Hecke operators of \cite[\S8]{BCFMN} arise from the extra flexibility, noted in \cite[Remark 3.1]{BCFMN}, of changing local coordinates by matrices in $\GL_2(\RR)^+$ not lying in $\Gam(1)$ itself. 
 This extra flexibility is available for an arbitrary Fuchsian group as well, provided $\Gam$ is commensurable with a possibly larger Fuchsian group, i.e., there is $\alpha\in\GL_2(\RR)^+$ normalizing the commensurability class of $\Gam$ with $\alpha\Gam\alpha^{-1}\cap\Gam$ of finite index in both $\Gam$ and $\alpha\Gam\alpha^{-1}$. 
For such $\alpha$, exactly as in \cite[\S8.2]{BCFMN}, one defines, for $f\in M_{2k}(V,\Gam)$, 
$$T'_\alpha f(\tau):=(\det\alpha)^{k-1}\sum_{[\beta]}f\Vert_{2k}\beta(\tau),$$
the sum over $\Gam$-orbits $[\beta]$ in $\Gam\backslash(\Gam\alpha\Gam)$. 
 One finds, by exactly the computation of \cite[Thm. 8.2]{BCFMN} (which uses only the cocycle property of $\Vert_{2k}$ and the elementary $q$-expansion manipulation of \cite[eq. (30)]{BCFMN}, both insensitive to the specific group $\Gam$), 
that $T'_\alpha=(\det\alpha)^{L(0)}T_\alpha$ where $T_\alpha$ acts on 
$Q(\Gam)\otimes V^{(2)}\cong Q(V,\Gam)$ componentwise via the classical Hecke action on $q$-expansions.
 Diagram (32) of \cite{BCFMN} and Theorem 8.3, Corollary 8.4, and Lemma 8.5 of loc. cit. are then formal consequences of exactly two ingredients established above for arbitrary $\Gam$: the identification $T'_\alpha=(\det\alpha)^{L(0)}T_\alpha$ just proved, and the $Q(\Gam)$-module isomorphism $Q(\Gam)\otimes V^{(2)}\cong Q(V,\Gam)$ of Theorem \ref{thmQiso}. Neither ingredient uses any property special to $\Gam(1)$, thus the four statements 
go through verbatim with $\Gam(1)$ replaced by $\Gam$ throughout.

What cannot be obtain automatically is the rich arithmetic content of \S8.4 of \cite{BCFMN},
(namely, the existence of a full Hecke algebra $\{T'_m\}_{m\ge 1}$ satisfying multiplicativity relations $T'_mT'_n=T'_{mn}$ for $(m,n)=1$, an Euler product, and an associated $L$-function theory)
 since this requires $\Gam$ to be commensurable with a tower of finite-index subgroups indexed by all integers $m\ge 1$, which is special to arithmetic Fuchsian groups. 
 The natural higher-genus arithmetic analogue of $\Gam(1)$ is a Fuchsian group arising from an order in an indefinite quaternion algebra over $\mathbb Q$ (a Shimura curve group), for which the full classical Hecke theory, including multiplicativity and Euler products, is available. 
Consideration of the resulting $V$-valued Hecke eigensystems on $M(V,\Gam)$ for such groups, via Theorem \ref{thmPop} exactly as in \cite[\S8.4]{BCFMN}, is a natural problem we leave for  a future paper. 
%%
%%%%%%%%%%%%%%%%%%%%%%%%%%%%%%%%%%%%%%%%%%%%%%%%%%%%%%%%%%%%%%%%%%%%%%%%%%%%%%%%%%%%%%%%%%%%
%%%%%%%%%%%%%%%%%%%%%%%%%%%%%%%%%%%%%%%%%%%%%%%%%%%%%%%%%%%%%%%%%%%%%%%%%%%%%%%%%%%%%%%%%%%%
\part{Families over Teichm\"uller space and moduli space}

Let us briefly describe the content of this part. 
\S\ref{secunivfamily}-\ref{secteichforms} are proved statements, 
direct fiberwise consequences of Theorems A-F applied at each $t\in\mathcal T_{g,n}$ 
together with classical holomorphic-variation results of Bers and Ahlfors-Weil 
(cited precisely below). 
 \S\ref{secvariation}, by contrast, is explicitly exploratory throughout, 
as it is mentioned repeatedly in the text of that section, and contains no theorem or conjecture of this paper. 
%%
%%%%%%%%%%%%%%%%%%%%%%%%%%%%%%%%%%%%%%%%%%%%%%%%%%%%%%%%%%%%%%%%%%%%%%%%%%%%%%%%%%
%% 
\section{The universal Fuchsian family and the universal bundle}\label{secunivfamily}
Fix a topological type $(g,n)$ with $2g-2+n>0$ and let $\mathcal T_{g,n}$ denote the Teichm\"uller space of marked hyperbolic structures on a genus-$g$ surface with $n$ punctures, a complex manifold of dimension $3g-3+n$ \cite{Bers,IT}. 
 By the uniformization theorem, a point $t\in\mathcal T_{g,n}$ corresponds to a discrete faithful representation $\rho_t\colon\pi_1(\Sigma_{g,n})\to\PSL_2(\RR)$, 
well-defined up to conjugation once a marking is fixed, with image a torsion-free Fuchsian group $\Gam_t:=\rho_t(\pi_1(\Sigma_{g,n}))$ and $X_t:=\Gam_t\backslash\HH$ the corresponding marked Riemann surface.
 The \emph{Bers fiber space} (the universal curve) 
$$\mathcal C_{g,n}:=\mathcal T_{g,n}\times_{\pi_1(\Sigma_{g,n})}\HH,\qquad \gamma\cdot(t,\tau):=(t,\rho_t(\gamma)\tau),$$
is a holomorphic fiber bundle $\pi\colon\mathcal C_{g,n}\to\mathcal T_{g,n}$ with fiber $X_t$ over $t$ \cite{Bers}.
%% 
%%%%%%%%%%%%%%%%%%%%%%%%%%%%%%%%%%%%%%%%%%%%%%%%%%%%%%%%%%%%%%%%%%%%%%%%%%%%%%%%%
%% 
\begin{definition}
Fix a QVOA $V$. The \emph{universal VOA bundle} is 
$$\mathcal V:=\mathcal T_{g,n}\times_{\pi_1(\Sigma_{g,n})}(\HH\times V)\;\longrightarrow\;\mathcal C_{g,n},\qquad \gamma\cdot(t,\tau,v):=(t,\rho_t(\gamma)\tau,K(\rho_t(\gamma),\tau)v),$$
using the cocycle $K$ of Theorem \ref{thmbundle} applied to $\Gam_t$ for each $t$.
\end{definition}
%% 
%%%%%%%%%%%%%%%%%%%%%%%%%%%%%%%%%%%%%%%%%%%%%%%%%%%%%%%%%%%%%%%%%%%%%%%%%%%%%%%%%%%%%
%% 
By Theorem \ref{thmbundle}, applied fiberwise, $\mathcal V$ is well defined and its restriction to the fiber $X_t=\pi^{-1}(t)$ is exactly the bundle $\mathcal V_{X_t}$ of \S\ref{secbundle} for $\Gam_t$.
%% 
%%%%%%%%%%%%%%%%%%%%%%%%%%%%%%%%%%%%%%%%%%%%%%%%%%%%%%%%%%%%%%%%%%%%%%%%%%%%%%%%%%%
%% 
\section{Teichm\"uller-automorphic forms and descent to moduli}
\label{secteichforms} 
\begin{definition}
A \emph{$V$-valued Teichm\"uller-automorphic form of weight $k$} is a holomorphic map
$$F\colon\mathcal T_{g,n}\times\HH\to F_NV,\qquad F=F(t,\tau),$$
holomorphic in $t$, such that for each fixed $t$, $F(t,-)\in M_k(V,\Gam_t)$ in the sense of Definition \ref{defVvaluedforms}. 
 Equivalently, $F$ is a holomorphic section over $\mathcal T_{g,n}$ of the vector bundle 
$\mathcal M_k(V)\to\mathcal T_{g,n}$ whose fiber at $t$ is $M_k(V,\Gam_t)$. 
\end{definition}
%% 
%%%%%%%%%%%%%%%%%%%%%%%%%%%%%%%%%%%%%%%%%%%%%%%%%%%%%%%%%%%%%%%%%%%%%%%%%%%%%%%%%%%%%
%% 
That $\mathcal M_k(V)$ is indeed a holomorphic vector bundle, rather than a family of vector spaces, over $\mathcal T_{g,n}$ follows from the classical fact, going back to Bers and Ahlfors-Weil, that the spaces $M_k(\Gam_t)$ of classical automorphic forms vary holomorphically with $t\in\mathcal T_{g,n}$, together with our Theorem \ref{thmPop}, which expresses $\mathcal M_k(V)$ as a finite direct sum of tensor products $\mathcal M_{2i}\otimes V_{k/2-i}$ of such classical bundles with the (constant) finite-dimensional vector space $V_{k/2-i}$. 
 The gluing data of $\mathcal M_{2i}$ over $\mathcal T_{g,n}$ is the standard one coming from the Bers embedding. 

The mapping class group $\mathrm{Mod}(\Sigma_{g,n})$ acts properly discontinuously on 
$\mathcal T_{g,n}$ by changing the marking, and acts compatibly on $\mathcal C_{g,n}$, 
$\mathcal V$, and each $\mathcal M_k(V)$. 
 Hence all of these descend to (orbifold) objects over  the moduli space $\mathcal M_{g,n}=\mathcal T_{g,n}/\mathrm{Mod}(\Sigma_{g,n})$ and its universal curve $\mathcal C_{g,n}/\mathrm{Mod}(\Sigma_{g,n})\to\mathcal M_{g,n}$.
%%
%%%%%%%%%%%%%%%%%%%%%%%%%%%%%%%%%%%%%%%%%%%%%%%%%%%%%%%%%%%%%%%%%%%%%%%%%%%%%%%%%%%%
%% 
\section{Variation of the projective connection: a speculative direction}
\label{secvariation}
For $\mathcal T_{g,n}$ with $n\ge 1$, every $\Gam_t$ has a cusp, 
thus the generator $E_2^{\Gam_t}$  of Proposition \ref{propE2exists} is available at every point, and it is natural to ask how it varies with $t$. 
 For $\mathcal T_g=\mathcal T_{g,0}$ 
 (closed surfaces), by contrast, Theorem \ref{thmatiyah} shows that $E_2^{\Gam_t}$ does not exist at any point $t$. 
Thus the question must be posed at the more primitive level of the uniformizing projective connection itself, of which $E_2^{\Gam_t}$ is, when it exists, the connection coefficient.  
In either case, the projective structure on $X_t$ induced by writing $X_t=\Gam_t\backslash\HH$ in the flat coordinate $\tau$ is classically encoded by the accessory parameters of the Fuchsian uniformizing differential equation 
$$u''(z)+\tfrac12Q_t(z)u(z)=0,$$
on $X_t$, whose monodromy group is $\Gam_t$ (see, e.g., \cite{ZT}). 
 When $\Gam_t$ has a cusp, $Q_t$ is essentially $E_2^{\Gam_t}$ itself, while for closed $X_t$ it exists as a (transcendental, generally unknown in closed form) holomorphic quadratic differential with no further reduction available, consistently with Theorem \ref{thmatiyah}.

The dependence of the accessory parameters $Q_t$ on $t\in\mathcal T_{g,n}$ is given by the celebrated formula of Polyakov, made rigorous by Zograf and Takhtajan \cite{ZT}. The variation of $Q_t$ under a quasiconformal deformation of $X_t$ with Beltrami differential $\mu$ is expressed in terms of the classical Liouville action, and the resulting potential for this variation is, up to an explicit constant, the K\"ahler potential of the Weil-Petersson metric on $\mathcal T_{g,n}$.

We record the following as an exploratory direction rather than a result, a conjecture, or even a precisely-posed mathematical question of this paper: the Zograf-Takhtajan formula raises the possibility that the first-order variation $\delta Q_t$ of the accessory parameters, suitably normalized into a $(1,0)$-form on $\mathcal T_{g,n}$ valued in (a completion of) holomorphic quadratic differentials on $X_t$, has associated curvature recovering the Weil-Petersson K\"ahler form on $\mathcal T_{g,n}$ up to a constant depending on the central charge data of 
$V$. When $\Gam_t$ has a cusp, this would specialize to a statement about the variation of $E_2^{\Gam_t}$ and the lowering operator $\Lam_t$ of \S\ref{seclowering}. 
 We have not formulated the precise hypotheses (regularity of $\delta Q_t$ across $\mathcal T_{g,n}$, the correct normalization, compatibility with the mapping class group action) under which such a statement could be made rigorous. 
Were such a statement available, it would suggest comparing the natural ``horizontal'' structure on the bundle $\mathcal M(V)\to\mathcal T_{g,n}$, coming from parallel transport under the Weil-Petersson connection, with the general theory of projectively flat connections on bundles of conformal blocks over moduli of curves developed by Tsuchiya-Ueno-Yamada \cite{TUY} and, in the language of vector bundles on moduli of curves, by Frenkel and Ben-Zvi \cite{FBZ}. 
Indeed, the bundle $\mathcal V_X$ of \S\ref{secbundle} is, by construction, exactly the sheaf whose fiber is the vacuum module of $V$ along the universal curve, in the sense of 
\cite[Ch. 17-19]{FBZ}.  
The spaces $M_k(V,\Gam)$ studied here are spaces of global holomorphic sections of (powers of) this sheaf, rather than spaces of conformal blocks, which are typically defined via coinvariants/cohomology rather than sections, thus the precise relationship between the two constructions (in particular, whether the projectively flat KZB-type connection on conformal blocks restricts to, or is compatible with, the connection $\nabla$ of Theorem 
\ref{thmconnection} on the automorphic-form side) is an interesting question that we do not resolve here. 
 We return to this as an item in the future-directions discussion of \S\ref{secconclusion}. 
%% 
%%%%%%%%%%%%%%%%%%%%%%%%%%%%%%%%%%%%%%%%%%%%%%%%%%%%%%%%%%%%%%%%%%%%%%%%%%%%%%%%%
%% 
\section{Examples}
\label{secexamples}
\begin{example}
[the trivial VOA, $\Gam$ with a cusp]
Taking $V=\CC\mathbf 1$ (with trivial $\mathfrak{sl}_2$-action) and $\Gam$ with at least one cusp recovers the classical theory: $M(V,\Gam)=M(\Gam)$, $Q(V,\Gam)=Q(\Gam)$, $\Lam=\tfrac{12}{2\pi i}\partial/\partial E_2^\Gam$, and Theorem \ref{thmkernel} reduces to the classical statement that $M(\Gam)=\ker(\partial/\partial E_2^\Gam)$ inside $Q(\Gam)=M(\Gam)[E_2^\Gam]$, the immediate generalization, via Proposition \ref{propE2exists}, of the elementary fact that $M=\CC[E_4,E_6]\subset Q=\CC[E_2,E_4,E_6]$ is the kernel of $\partial/\partial E_2$. 
For $\Gam$ cocompact, the same trivial example reduces instead to Theorem \ref{thmatiyah}: 
$Q(\Gam)$ does not even contain a nonzero element of depth $1$ at weight $2$.
\end{example}
%%
%%%%%%%%%%%%%%%%%%%%%%%%%%%%%%%%%%%%%%%%%%%%%%%%%%%%%%%%%%%%%%%%%%%%%%%%%%%%%%%%%%%%%%%%%
%% 
\begin{example}
[compact $\Gam$ of genus $g\ge 2$, no elliptic points]
\label{excompact}
Let $\Gam$ be cocompact and torsion-free of genus $g\ge 2$, thus $X=Y$ is already compact, $r=n=0$. By Proposition \ref{propRR},
$$\dim M_0(\Gam)=1,\quad\dim M_2(\Gam)=g,\quad\dim M_{2i}(\Gam)=(2i-1)(g-1),\quad i\ge 2,$$ 
and a short computation gives the closed-form generating function
$$f_g(q):=\sum_{i\ge 0}\dim M_{2i}(\Gam)\,q^i=\frac{1+(g-2)(q+q^2)+q^3}{(1-q)^2}.$$
Indeed, writing $\sum_{i\ge 2}(2i-1)q^i=\dfrac{q^2(3-q)}{(1-q)^2}$, an elementary partial-fraction computation, and combining with the separate terms $1$, $gq$, one finds the numerator collapses to $1+(g-2)q+(g-2)q^2+q^3$ as displayed.
 For $g=2$ this further simplifies to $f_2(q)=(1+q^3)/(1-q)^2$ recovering the sequence of dimensions $1$, $2$, $3$, $5$, $7$, $9$, $11$, $\dots$ of  $M_0$, $M_2$, $M_4$, $\dots$ for a genus-$2$ curve. 

Since $\Gam$ is cocompact, Corollary \ref{cordimformula} is not available.
 By Theorem \ref{thmexactdim}, however, we now know exactly: 
 $S_2$ is $2$-dimensional, spanned by $h(-1)^2\mathbf1$ (quasi-primary)
 and $h(-2)\mathbf 1$ (not $L(1)h(-2)\mathbf1=2h\ne0$, cf. \cite[Table 2]{BCFMN}).   
Thus $QP_2(S)=\CC\cdot h(-1)^2\mathbf1$ is $1$-dimensional, and
$$\dim M_4(S,\Gam)=\dim M_4(\Gam)\cdot1+\dim M_2(\Gam)\cdot\dim S_1+\dim QP_2(S)=3(g-1)+g+1=4g-2,$$
strictly below the upper bound $\dim_qV\cdot f_g(q)$ at $q^2$. 
 Namely $\dim M_4(\Gam)+\dim M_2(\Gam)\dim S_1+\dim S_2\cdot1=3(g-1)+g+2=4g-1$ exactly as anticipated.  The deficiency is $1=\dim S_2-\dim QP_2(S)$. 
More generally, $\sum_k\dim M_{2k}(S,\Gam)q^k$ is given exactly, term by term, by Theorem 
\ref{thmexactdim}, in terms of $f_g(q)$, $\varphi(q)=\dim_qS$,  
the graded dimensions of $QP_n(S)$ (computable directly from the explicit bases of \cite[Table 2]{BCFMN}, since $QP_n(S)=\ker(L(1)|_{S_n})$ and the basis vectors there are already adapted to the $L(1)$-action).
\end{example}
%% 
%%%%%%%%%%%%%%%%%%%%%%%%%%%%%%%%%%%%%%%%%%%%%%%%%%%%%%%%%%%%%%%%%%%%%%%%%%%%%%%%%%%%%%%%
%% 
\begin{example}
[an elliptic point: the constant-section case] 
\label{exelliptic}
Let $\Gam$ have a single elliptic point of order $m\ge 2$ fixing $\tau_0\in\HH$ with stabilizer generator $\gamma_0$ such that $j(\gamma_0,\tau_0)^{-2}=\zeta=e^{2\pi i\ell/m}$, $\gcd(\ell,m)=1$, and let $V$ be the rank-one Heisenberg VOA, with $L(0)$-eigenbasis on each $V_n$. 
 As in Proposition \ref{proptopexact}, consider a constant section $f\equiv v$, $v\in V_n$, i.e., $F_NV=V_n$ with nothing below it, $N=n$, no completion possible.
 Specializing Proposition \ref{propellipticstructure} to this case ($n_0=n=N$, thus there is no room for any $v_0'\in F_{n-1}V$ at all): the fixed-vector equation 
$j(\gamma_0,\tau_0)^{2n}K(\gamma_0,\tau_0)v=v$ forces, examining the $V_{n-1}$-component of the left side (which must vanish, as the right side has none), $L(1)v=0$.   

As $v\in V_n$ is the constant section's only graded piece and is assigned weight $k=2n$,   matching \cite[Example 5.1]{BCFMN} and Proposition \ref{proptopexact}. 
The diagonal eigenvalue is $\lambda_n=j(\gamma_0,\tau_0)^{k-2n}=j(\gamma_0,\tau_0)^0=1$ automatically, there is no separate resonance condition to check, exactly as for 
$M_0(\Gam)=\CC$ in the scalar theory. 
\eqref{eqellipticscalar}'s order of $\zeta$ subtlety only reenters as in Proposition \ref{propellipticstructure}, which degree $n_0$ within a larger $F_NV$ can serve as a resonant leading term. 
Thus,  $f(\tau_0)\in\ker L(1)$. In the constant-section setting actually used throughout this paper (Proposition \ref{proptopexact}, \cite[Example 5.1]{BCFMN}), quasi-primarity is not only sufficient but necessary, because there is no lower conformal degree available within $F_NV=V_n$ to absorb the discrepancy that Proposition \ref{propellipticstructure} would otherwise allow. 
For a general $F_NV$-valued section with $N>n_0$, by contrast, Proposition \ref{propellipticstructure} shows this conclusion is too strong: the leading term need only satisfy the (typically nontrivial) resonance condition $\lambda_{n_0}=1$ appropriate to its own degree $n_0<N$ with quasi-primarity emerging only as a special feature of the no-completion-available case illustrated here. 
\end{example}
%% 
%%%%%%%%%%%%%%%%%%%%%%%%%%%%%%%%%%%%%%%%%%%%%%%%%%%%%%%%%%%%%%%%%%%%%%%%%%%%%%%%
%% 
\section{Concluding remarks and open problems}
\label{secconclusion} 
We have shown that the bundle-theoretic approach of \cite{BCFMN} to vertex operator algebra valued modular forms on the elliptic modular curve extends to an arbitrary Fuchsian group 
$\Gam$ in two different regimes. 
The bundle $\mathcal V_X$ and its connection $\nabla$ (Theorems A, B) are available, for every Fuchsian group, with no change to the arguments of \cite{BCFMN}.
 The richer algebraic theory built on the quasi-modular generator $E_2$, the doubled QVOA 
$Q(V,\Gam)$, the kernel theorem, the explicit free-module description 
$M(\Gam)\otimes V^{(2)}\cong M(V,\Gam)$, and the resulting Hilbert series appear, 
but only when $\Gam$ has a cusp (Theorems C, D, E) constructed via Hecke's classical analytic continuation (\S\ref{secE2}). 
 When $\Gam$ is cocompact of genus $g\ge 2$, the case of primary interest for higher-genus geometry, this second layer of structure is obstructed.  No holomorphic weight-$2$ quasi-automorphic generator exists at all (Theorem \ref{thmatiyah}) by an application of Atiyah's theorem on holomorphic connections.
 We have shown, however, that this obstruction is completely understood at the level of dimensions. For torsion-free cocompact $\Gam$, Theorem \ref{thmexactdim} replaces the   inequality of Theorem \ref{thmbound} with an exact formula, in which the entire deficiency is concentrated at the top conformal degree and equals precisely the failure of quasi-primarity in $V$.
 Conjecture \ref{conjexactdim} proposes the natural extension to elliptic points, of which we have proved the top-degree half (Proposition \ref{proptopexact}).
 We have also discussed, with a full structural account (Lemma \ref{lemellipticfiniteorder}, Proposition \ref{propellipticstructure}), the behavior of $V$-valued automorphic forms at elliptic points. 
The cusp/cocompact dichotomy for $E_2^\Gam$ itself (entirely invisible at level one where the unique cusp of $\Gam(1)$ supplies exactly the escape from Atiyah's obstruction) remains, in our view, the most interesting structural feature uncovered by passing to higher genus. 

Freeness of $Q(\Gam)=M(\Gam)[E_2^\Gam]$, on which most of Part I depends, 
is given by Lemma \ref{lemfreeness}, a complete proof derived from existence of one holomorphic depth-$1$ generator. 
The existence (Lemma \ref{lemheckefact}) is cited precisely, to the classical work of Selberg and Roelcke.
 The extension of the Atiyah obstruction to groups with elliptic points (Theorem \ref{thmatiyah}) does not invoke orbifold degree statements, but reduces elementarily, via Selberg's lemma (Lemma \ref{lemselberg}), to the torsion-free case.
 The generic-resonance hypothesis of Proposition \ref{propellipticstructure} is stated,
 and its scope precisely delimited, in Remark \ref{rmkellipticgeneric}.  
It is not needed anywhere in this paper except for the general-signature case of Conjecture \ref{conjexactdim} beyond its top-degree term, which remains open. 
 The quasi-vertex-operator-algebra structure of Theorem \ref{thmdoubledQVOA} now rests on an explicitly derived Ramanujan-type identity (Lemma \ref{lemramanujan}) and an explicit verification that $[\theta,\delta]$ acts as a scalar (in the proof of Theorem \ref{thmdoubledQVOA}), rather than a dimension count.
 Throughout, we have tried to state at each theorem exactly which of validity, torsion-free-only validity, or conjectural status applies, and to fix a single convention (Remark \ref{rmkomeganotation}) for the ordinary bundle $K_X$ versus its orbifold analogue.
%%

%%%%%%%%%%%%%%%%%%%%%%%%%%%%%%%%%%%%%%%%%%%%%%%%%%%%%%%%%%%%%%%%%%%%%%%%%%%%%%%%%
%%  
Several questions raised by this extension seem to us worth pursuing: 
\noindent\qquad\textit{Conjecture \ref{conjexactdim} in full.} 
Complete the proof of the exact dimension formula for cocompact $\Gam$ of general signature. As the discussion following Proposition \ref{proppartialconj} makes precise, this reduces to a single further ingredient: an orbifold (parabolic) Serre duality and Riemann-Roch theorem showing $H^1(X,\mathcal Q_j)=0$ for the relevant orbifold line bundles $\mathcal Q_j$ at all conformal degrees $j$ below the top.  
A parallel problem is the multiple-resonance case of Proposition   
\ref{propellipticstructure}, more than one resonant conformal degree at a single elliptic point within a single $F_NV$, which the proof of Conjecture \ref{conjexactdim} along these lines would need to confront directly once $k$ is large relative to the orders $m_1$, $\dots$, $m_r$.

\noindent\qquad\textit{Arithmetic Hecke theory.}
 Consider the $V$-valued Hecke eigensystems of \S\ref{sechecke} explicitly for Shimura curve groups $\Gam$ coming from quaternion orders 
(which have no cusps, thus this sits inside the cocompact theory of \S\ref{seccocompact}), 
 generalizing \cite[\S8.4]{BCFMN}. 
This requires identifying the relevant analogue of the classical Eichler-Selberg trace formula for $\Gam$, and relating the resulting $L$-functions to those of automorphic forms on the associated quaternion algebra.

\noindent\qquad\textit{The real-analytic structure theory.}  
Independently of Conjecture \ref{conjexactdim}, make the real-analytic substitute built from the Maass-Shimura operator $\delta_k$ of Remark \ref{rmkmaassshimura} precise: develop the structure theory of the nearly holomorphic space $\widetilde Q(V,\Gam)$ (its QVOA-type structure, a lowering operator built from $1/4\pi\Im\tau$, and a real-analytic analogue of the $P$-isomorphism). Such a theory would give an independent, Maass-Shimura-theoretic route to Conjecture \ref{conjexactdim}, alongside the algebro-geometric route of Item 1.

\noindent\qquad\textit{The Teichm\"uller-space direction.}  
The discussion of \S\ref{secvariation} is explicitly exploratory rather than a conjecture of this paper. 
Making precise any relationship between the variation of the uniformizing projective connection over Teichm\"uller space, the Weil-Petersson geometry of $\mathcal T_{g,n}$, and the projectively flat KZB-type connection on the bundle of $V$-conformal blocks over $\mathcal M_{g,n}$ studied in \cite{TUY, FBZ} would be a substantial undertaking in its own right, likely requiring tools (such as the Quillen metric on determinant lines, the precise comparison of sections versus coinvariants) well beyond the methods used in this paper. 

\noindent\qquad\textit{Degeneration to the boundary of moduli space.} 
 Extend the construction of \S\ref{secunivfamily}-\ref{secteichforms} across the Deligne-Mumford boundary $\partial\mathcal M_{g,n}$, where $\Gam_t$ degenerates to a noded Fuchsian group. This should be controlled by the factorization properties of $V$ familiar from the theory of conformal blocks \cite{TUY, FBZ}, and would give a compactified version of the bundle $\mathcal M_k(V)$ constructed here.

The material presented in this paper is also useful in other areas of mathematical physics 
\cite{Frohlich2009gb, RSZ, kmmzz} and condenced matter 
physics \cite{volzub, z9, z8, z7, z4, z5, z3, z1}. 

\medskip
%%%%%%%%%%%%%%%%%%%%%%%%%%%%%%%%%%%%%%%%%%%%%%%%%%%%%%%%%%%%%%%%%%%%%%%
%% 
\noindent\textbf{Acknowledgments.} 
The author is supported by the
Institute of Mathematics, Academy of Sciences of the Czech Republic
(RVO 67985840). 

\medskip
\noindent\textbf{Data Availability.}
Data sharing is not applicable to this article as no datasets were generated
or analysed during the current study.

\medskip
\noindent\textbf{Declarations}

\medskip
\noindent\textbf{Conflict of interest.}
The author has no conflicts of interest to declare that are relevant to the
content of this article.
%%
%%%%%%%%%%%%%%%%%%%%%%%%%%%%%%%%%%%%%%%%%%%%%%%%%%%%%%%%%%%%%%%%%%%%%%%%%%%%%%%%%%%%%%
%%%%%%%%%%%%%%%%%%%%%%%%%%%%%%%%%%%%%%%%%%%%%%%%%%%%%%%%%%%%%%%%%%%%%%%%%%%%%%%%%%%%%%
%% 

\end{document}